\date{}
\newcommand{\CC}{\mathbb{C}}  
\newcommand{\RR}{\mathbb{R}}
\theoremstyle{plain}
\numberwithin{equation}{section}
\newtheorem{theorem}{Theorem}[section]
\newtheorem{proposition}[theorem]{Proposition}
\newtheorem{remark}[theorem]{Remark}
\title{Existence of corotating  asymmetric vortex pairs for Euler equations}
\author[Z. Hassainia]{Zineb Hassainia}
\address{NYU Abu Dhabi\\
Saadiyat Marina District - Abu Dhabi, United Arab Emirates.
}
\email{zh14@nyu.edu}
\author[T. Hmidi]{Taoufik Hmidi}
\address{IRMAR, Universit\'e de Rennes 1\\ Campus de
Beaulieu\\ 35~042 Rennes cedex\\ France}
\email{thmidi@univ-rennes1.fr}
\begin{document}
 \begin{abstract}
In this paper, we study the existence of co-rotating and counter-rotating unequal-sized  pairs of simply connected patches for Euler equations.  In particular, we prove the existence of  curves of steadily co-rotating and counter-rotating asymmetric vortex pairs passing through a point vortex pairs with unequal circulations. %We also provide  a careful study of the asymptotic behavior of the angular velocity and the speed for the co-rotating and the counter rotating cases, respectively.

\end{abstract}
 \maketitle

\tableofcontents

\section{Introduction}
In this paper we shall be concerned with the dynamics of asymmetric  vortex pairs for  the two-dimensional incompressible Euler equations. These equations describe the motion of an ideal fluid and take the form
\begin{equation} \label{eqn:omega}
\left\{ \begin{array}{ll}
  \partial_t \omega + v \cdot \nabla \omega = 0\quad (t,x)\in \mathbb{R}_+\times \mathbb{R}^2,&\\ 
   v= -\nabla^\perp(-\Delta)^{-1}\omega,&\\
   \omega_{|t=0}=\omega_0,
  \end{array}\right.
\end{equation}
where $v = (v_1, v_2)$ refers to the velocity field and its   vorticity  $\omega$ is given  by the
scalar $\omega = \partial_1v_2 -\partial_2v_1.$
The second equation in \eqref{eqn:omega} is nothing but the Biot-Savart law which can be written in the complex form:
\begin{align*}
v(t,z) &=\frac{i}{2\pi }\int_{ \mathbb{R}^2}\frac{\omega(t,\zeta)}{\overline{\zeta}-\overline{z}}dA(\zeta), \quad\forall z\in \CC,
\end{align*}
where we identify  $v = (v_1, v_2)$ with the complex valued-function $v_1 + iv_2$ and  $dA$ denotes  the planar Lebesgue measure.
 The global existence and uniqueness of solutions with initial integrable and bounded vorticity was established a long time ago by Yudovich \cite{Y}. He  proved that the system \eqref{eqn:omega} admits a unique global solution in the weak sense, provided that the initial vorticity $\omega_0$ lies in $L^1\cap L^\infty$. In this setting,  we can rigorously deal  with the  so-called  vortex patches, which are the characteristic function of bounded domains.  This specific structure is preserved along the time and the solution $\omega(t)$ is uniformly distributed in the bounded domain $D_t$, which is  the image by the flow mapping of the initial domain $D_0$.  
When  $D_0$ is a disc then the  vorticity is stationary. However, elliptical vortex patches undergo a perpetual rotation about their centers without changing the shape. This discovery goes back to Kirchhoff \cite{K}  and till now, the ellipses are  the only explicit examples with such properties in the setting of vortex patches.  The existence  of general class of  implicit rotating patches, called also V-states, was discovered numerically by Deem and Zabusky
\cite{DZ}.  Few years later,  Burbea \cite{Bu} gave  an analytical proof  using complex analytical tools and bifurcation theory to show the existence of  a countable family of V-states  with $m$-fold symmetry for each integer $m \geq 2$. More precisely, 
the rotating patches appear as a collection of one dimensional branches bifurcating from Rankine vortices  at the  discrete  angular velocities set   $\big\{\frac{ m-1}{2m}$, $m \geq  2\big\}$.
These local branches were extended very recently  to global ones in \cite{HMH}, where the minimum value on the patch boundary of the angular fluid velocity becomes arbitrarily small near the end of each branch. The regularity of the V-states boundary  has been conducted  in a series  of papers   \cite{ CCG, CCG1,HMH, HMV}. The existence of small loops in the bifurcation diagram has been proved recently in \cite{HR}.
From numerical point of view,  Wu, Overman, and Zabusky \cite{WOZ} went further along the same branches  and found singular limiting solutions with $90^\circ$ corner. We also refer to the paper \cite{O} where it is proved that corners with right angles is the only plausible scenario for the limiting V-states. 

It is worth pointing out that Burbua's approach  has been intensively exploited in the few last years in different directions. For instance, this was implemented  to prove the existence of rotating patches close to Kirchhoff's ellipses \cite{CCG4,HM2}, multi-connected patches \cite{ HHFMV, HFMV}, patches in bounded domains \cite{HHFMV}, non trivial rotating smooth solutions \cite{CCG2, CCG3} and rotating vortices with non uniform densities \cite{GHS} .
We mention that many of these results apply not only to the Euler equations but also to more singular nonlinear transport equations as the inviscid surface quasi-geostrophic equations or the quasi-geostrophic shallow-water equations, but with much more involved computations; in this context also see \cite{CCG4,CCG3,CCG,CCG1,HHF,DHR,G-S-Y,HH,HM3,HMV2}.
 
It is important to emphasize that all of the aforementioned  analytical results treat connected patches.
However, for the  disconnected ones the bifurcation arguments discussed above are out of use.  The main objective of this paper is to deal with vortex pairs moving without changing the shape.
One of the very simplest nontrivial vortex equilibria is given by a pair of point vortices with  magnitude $\gamma_1$ and $\gamma_2$ and  far away at a distance $d$. It is well known that if the circulations are with opposite signs $\gamma_1=-\gamma_2$  then the system travels  in a rectilinear motion with a constant speed $U_0=\displaystyle \frac{\gamma_1}{2\pi d}$, otherwise  the pair of point vortices   rotates steadily with the angular \mbox{velocity $\Omega_0=\displaystyle\frac{\gamma_1+\gamma_2}{2\pi d^2}$.}

Coming back to the emergence of steady  disconnected  vortex patches, translating vortex pairs of symmetric patches were discovered numerically by Deem and Zabusky \cite{DZ} and Pierrehumbert \cite{P} where they conjectured the existence of a curve of translating symmetric pair of simply connected patches emerging from two point vortices. A similar study was established by  Saffman and Szeto in \cite{SS} for the co-rotating vortex pairs, where two symmetric patches with the same circulations rotate about the centroid of the system with constant angular velocity.  In the same direction Dritschel \cite{D} calculated numerically the  V-states of asymmetric  vortex pairs  and discussed their linear stability.

The analytical  study of the corotating vortex pairs of patches was conducted  by Turkington \cite{T} using variational principle. However, this approach does not give sufficient information on the topological structure of each vortex patch and the uniqueness problem is left open. In the same direction  Keady \cite{K} implemented  the same approach   to prove the existence part of  translating vortex pairs of symmetric patches. Very recently, Hmidi and Mateu \cite{HM} gave direct proof confirming the numerical experiments  and showing the existence of co-rotating and counter-rotating vortex pairs using the contour dynamics equations combined with a desingularization of the point vortex pairs and the application of the implicit function theorem.

The main concern of this work is to explore the existence of  of co-rotating and counter-rotating asymmetric  vortex pairs using the contour dynamics equations. Before stating our result we need to make some notation. 
Let $ \varepsilon \in (0,1), \gamma_1, \gamma_2\in \mathbb{R}, b_1,b_2\in \mathbb{R}_+$ and $d>2(b_1+b_2)$. Consider two small simply connected domains $D_1^\varepsilon$  and $D_2^\varepsilon$ containing the origin and contained in the open ball $B(0, 2)$ centered at the origin and with radius 2. Define
$$
\omega_{0,\varepsilon}=\frac{\gamma_1}{\varepsilon^2b_1^2}\chi_{\tilde{D}_1^\varepsilon}+\frac{\gamma_2}{\varepsilon^2b_2^2}\chi_{\tilde{D}_2^{\varepsilon}},
$$
with
$$
\tilde{D}_1^\varepsilon=\varepsilon b_1 D_1^\varepsilon\quad\textnormal{and}\quad  \tilde{D}_2^{\varepsilon}=-\varepsilon b_2  D_2^\varepsilon+d.
$$
Informally stated, our main existence result is the following
\begin{theorem}\label{thm:informal} 
There exists $\varepsilon_0 > 0$ such that the following results hold true.
\begin{enumerate} 
\item For any $\gamma_1,\gamma_2\in\mathbb{R}$ such that $\gamma_1+\gamma_2\neq0$ and any $\varepsilon\in (0, \varepsilon_0]$ there exists  two strictly convex domains $D^\varepsilon_1$ and $D^\varepsilon_2$ at least of class $C^1$ such that $\omega_{0,\varepsilon}$  generates a co-rotating vortex pair for \eqref{eqn:omega}.
\item For $\gamma_1\in\mathbb{R}$ and any $\varepsilon\in (0, \varepsilon_0]$ there exists  $\gamma_2=\gamma_2(\varepsilon)$ and   two strictly convex domains $D^\varepsilon_1$, $D^\varepsilon_2$ at least of class $C^1$  such that $\omega_{0,\varepsilon}$  generates a counter-rotating vortex pair for \eqref{eqn:omega}. 
%\textcolor{red}{Is it true that the $\gamma_i$ do not depend on $\varepsilon$?}
\end{enumerate}
\end{theorem}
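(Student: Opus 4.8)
The plan is to treat the configuration as a desingularisation of the two point vortices sitting at $0$ and $d$ and to solve the associated contour dynamics equations by the implicit function theorem around $\varepsilon=0$. I parametrise each boundary by a conformal map of the exterior unit disc, $\phi_j(w)=w+\sum_{n\ge1}a_n^j\,w^{-n}$, so that the boundary points of the rescaled patches are $\Phi_1(w)=\varepsilon b_1\phi_1(w)$ and $\Phi_2(w)=d-\varepsilon b_2\phi_2(w)$, the unknowns being the pair $(\phi_1,\phi_2)$ together with the angular velocity $\Omega$ in the co-rotating case (respectively the translation speed $U$ and the circulation $\gamma_2$ in the counter-rotating case). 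Converting the area Biot--Savart integral into a boundary Cauchy-type integral and imposing that each $\tilde D_j^\varepsilon$ be a streamline in the uniformly rotating frame, I obtain the two real equations
\begin{equation*}
F_j(\varepsilon,\Omega,\phi_1,\phi_2)(w)=\operatorname{Re}\Big\{\big[\,\overline{v\big(\Phi_j(w)\big)}+i\Omega\big(\overline{\Phi_j(w)}-\overline{z_c}\big)\,\big]\,w\,\phi_j'(w)\Big\}=0,\qquad w\in\mathbb{T},\quad j=1,2,
\end{equation*}
where $z_c=\frac{\gamma_2 d}{\gamma_1+\gamma_2}$ is the centre of rotation; the counter-rotating case is obtained by replacing the factor $i\Omega(\overline{\Phi_j}-\overline{z_c})$ by $-\overline{U}$.

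Next I fix the functional setting and make the $\varepsilon$-dependence explicit. I work in real subspaces of Hölder classes $C^{1+\alpha}(\mathbb{T})$ for the perturbations $\phi_j-\mathrm{id}$, so that each $F_j$ lands in the space of real $C^\alpha$ functions; the reflection symmetry of the configuration about the real axis lets me restrict to conformal maps with real coefficients $a_n^j$, which fixes the parity of $F_j$ and removes the trivial translation and rotation invariances. The decisive structural point is that the self-induced velocity of a patch scales like $\gamma_j/(\varepsilon b_j)$ and vanishes identically on a disc, while the velocity induced by the distant patch has denominators of size comparable to $d$ and is therefore real-analytic in $\varepsilon$ after the rescaling. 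Collecting the explicit powers of $\varepsilon$ produced by the scalings $\varepsilon b_j$ and the amplitudes $\gamma_j/(\varepsilon^2 b_j^2)$, the map $F=(F_1,F_2)$ extends to a $C^1$ map on a full neighbourhood of $\varepsilon=0$. At $\varepsilon=0$ the two patches decouple: the self-interaction alone must make each boundary a non-rotating streamline, which forces $\phi_1=\phi_2=\mathrm{id}$, and the mode-one part of the far-field balance fixes $\Omega=\Omega_0=\frac{\gamma_1+\gamma_2}{2\pi d^2}$ (respectively $U=U_0=\frac{\gamma_1}{2\pi d}$ and $\gamma_2=-\gamma_1$).

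The heart of the argument, and the step I expect to be the main obstacle, is the computation and inversion of the linearised operator $\partial_{(\phi_1,\phi_2,\Omega)}F$ at the point $(\varepsilon,\phi_1,\phi_2,\Omega)=(0,\mathrm{id},\mathrm{id},\Omega_0)$. Linearising the self-interaction about the disc yields a Fourier-diagonal operator whose symbol on the mode $w^{-n}$ is proportional to $\tfrac{n-1}{2n}$, the very multiplier underlying Burbea's $m$-fold V-states; because the self-term dominates by a factor $\varepsilon^{-1}$, this part is an isomorphism on all shape modes $n\ge2$ and controls the shape of each patch. The two remaining degrees of freedom are the mode-one (centre) components, which couple to $\Omega$ through the far-field term; here the non-degeneracy is supplied precisely by the hypothesis $\gamma_1+\gamma_2\neq0$, which guarantees that the orbital balance determines $\Omega$ uniquely and makes the full operator a bijection between the chosen spaces. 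In the counter-rotating case the rotational degree of freedom is degenerate (the pair merely translates), so I append $\gamma_2$ as an additional unknown and use the scalar compatibility condition expressing that both patches travel at the same speed; solving it produces $\gamma_2=\gamma_2(\varepsilon)$ with $\gamma_2(0)=-\gamma_1$ and restores invertibility.

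Granting this isomorphism, the implicit function theorem provides, for every $\varepsilon\in(0,\varepsilon_0]$, conformal maps $\phi_1^\varepsilon,\phi_2^\varepsilon$ and an angular velocity $\Omega^\varepsilon$ (respectively $U^\varepsilon$ and $\gamma_2(\varepsilon)$) that depend in a $C^1$ fashion on $\varepsilon$ and reduce to the disc at $\varepsilon=0$, and these data solve the contour dynamics equations, hence generate a genuine co-rotating (respectively counter-rotating) pair for \eqref{eqn:omega}. A bootstrap argument on the contour equation upgrades the regularity of $\partial D_j^\varepsilon$ to at least $C^{1}$. Finally, since $\phi_j^\varepsilon-\mathrm{id}=O(\varepsilon^2)$ in $C^{1+\alpha}$, each domain $D_j^\varepsilon$ is a small perturbation of the unit disc whose boundary curvature stays strictly positive once $\varepsilon_0$ is taken small enough; this yields the strict convexity asserted in both parts of the theorem.
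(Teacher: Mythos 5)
Your overall strategy (desingularize the point--vortex pair, write the contour dynamics equations via exterior conformal maps, apply the implicit function theorem at $\varepsilon=0$) is the same as the paper's, and your setup for the counter-rotating case, with the two scalar unknowns $(U,\gamma_2)$, matches it. But in the co-rotating case there is a genuine gap: you fix the centre of rotation at the point-vortex centroid $z_c=\frac{\gamma_2 d}{\gamma_1+\gamma_2}$ and keep only $\Omega$ as a scalar unknown. At $\varepsilon=0$ the linearization in the shape directions is $(h_1,h_2)\mapsto -\big(\gamma_1\,\mathrm{Im}\{h_1'\},\,\gamma_2\,\mathrm{Im}\{h_2'\}\big)$, which sends the mode $\overline{w}^{\,n}$ to the mode $\mathrm{Im}\{w^{n+1}\}$; its range therefore misses the two-dimensional space spanned by $\binom{1}{0}\mathrm{Im}\{w\}$ and $\binom{0}{1}\mathrm{Im}\{w\}$. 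The derivative in $\Omega$ supplies only the single direction $\binom{\gamma_2}{\gamma_1}\mathrm{Im}\{w\}$, so your full linearized operator has a one-dimensional cokernel and the implicit function theorem cannot be invoked. The paper repairs exactly this by promoting the centre $Z$ to a second scalar unknown: $\partial_Z F$ contributes the direction $\binom{1}{-1}\mathrm{Im}\{w\}$, and the vectors $\binom{\gamma_2}{\gamma_1}$, $\binom{1}{-1}$ span $\mathbb{R}^2$ precisely when $\gamma_1+\gamma_2\neq 0$ --- that is where the hypothesis enters, rather than through ``determining $\Omega$ uniquely''.

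This is not a removable technicality. For asymmetric pairs the true centre of rotation is not the point-vortex centroid: the paper's expansion gives $Z(\varepsilon)=z_c+\frac{\varepsilon^4}{d^3(\gamma_1+\gamma_2)^2}\big(\frac{\gamma_2^3}{\gamma_1}b_1^4-\frac{\gamma_1^3}{\gamma_2}b_2^4\big)+o(\varepsilon^4)$, whose correction is nonzero unless $|\gamma_2|\,b_1=|\gamma_1|\,b_2$. Hence, for generic data, your over-constrained system (patches pinned at $0$ and $d$, rotation about exactly $z_c$, and your maps contain no constant term that could re-float the configuration) has \emph{no} solutions near the disc pair, so no Lyapunov--Schmidt reduction can rescue the argument; the second scalar unknown is indispensable. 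A secondary inaccuracy: the shape linearization is not Burbea's multiplier $\frac{n-1}{2n}$. In the correctly rescaled functional the rotation term carries an extra factor $\varepsilon b_j$ and drops out at $\varepsilon=0$, and the surviving self-interaction has symbol proportional to $n\gamma_j$ on the mode $\overline{w}^{\,n}$; in particular it is injective on \emph{all} modes $n\geq1$, including mode one, and the defect is a missing direction in the target space, not a kernel of ``centre'' modes. Your kernel/cokernel bookkeeping should be redone accordingly, and once it is, the need for $Z$ (respectively $\gamma_2$ in the translating case, which you did include) becomes transparent.
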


\vspace{0,2cm}
\begin{remark}
 The domains $D^\varepsilon_j$, $j=1,2$ are  small perturbations of the unit  disc. Moreover, as a by-product of the proofs the corresponding conformal parametrization ${\phi}_j^\varepsilon:\mathbb{T}\to \partial  {D}^\varepsilon_j$  belongs to $C^{1+\beta}$ for any $\beta\in (0,1) $, and has the Fourier asymptotic  expansion 
 \begin{align*}
\phi_j(\varepsilon,w)&=w+\delta_j\Big(\frac{\varepsilon b_j }{ d}\Big)^2\overline{w}+\frac{\delta_j}{2}\Big(\frac{\varepsilon b_j }{ d}\Big)^3\overline{w}^2+\frac{\delta_j}{3}\Big(\frac{\varepsilon b_j}{d}\Big)^4\Big(\overline{w}^3+6\big(1+\delta_j\big)\overline{w}\Big)\\ &+\frac{\delta_j}{4}\Big(\frac{\varepsilon b_j}{d}\Big)^5\Big(\overline{w}^4+3\big(1+\delta_j\big)\overline{w}^2\Big)+o(\varepsilon^5) ,\quad j=1,2,
\end{align*}
where $\delta_j=\frac{ \gamma_{3-j}}{ \gamma_j}$ in the co-rotating case and $\delta_j=-1$ in the translating one. In addition, the angular velocity has the expansion
$$
\Omega(\varepsilon)=\frac{\gamma_1+\gamma_2}{2d^2}+\frac{\varepsilon^4}{2d^6}\Big(
{\gamma_1} b_2^4+{\gamma_2} b_1^4 \Big)+o(\varepsilon^4),
$$
and the center of rotation has the expansion
$$
Z(\varepsilon)=\frac{\gamma_2 d}{\gamma_1+\gamma_2}+\frac{\varepsilon^4}{d^3(\gamma_1+\gamma_2)^2}\Big(
\frac{\gamma^3_2}{\gamma_1} b_1^4-\frac{\gamma_1^3}{\gamma_2} b_2^4 \Big)+o(\varepsilon^4).
$$
In the case of  translation, the speed has the expansion
$$
U(\varepsilon)=\frac{\gamma_1}{2d}\Big(1+\frac{\varepsilon^4}{d^4}(2b_1^4+b_2^4)\Big)+o(\varepsilon^4)
$$
and  the vorticity $\gamma_2$  has the expansion 
$$
\gamma_2(\varepsilon)={\gamma_1}\Big(1+\frac{\varepsilon^4}{d^4}(b_1^4-b_2^4)\Big)+o(\varepsilon^4).
$$

\end{remark}
\begin{remark}

As we shall see later, the case  $\varepsilon=0$ corresponds to the vortex point system. This allows to recover the classical result stating  that two point vortices at distance $2d$ and  magnitudes ${2\pi}\gamma_1$ and ${2\pi}\gamma_2$
rotate uniformly about their centroid with the angular velocity $\Omega_0=\frac{\gamma_1+\gamma_2}{2d^2}$ provided that $\gamma_1+\gamma_2\neq0$ . However, they translate uniformly with the speed $U=\frac{\gamma_1}{2d}$ when $\gamma_1$ and $\gamma_2$ are opposite.
\end{remark}
\begin{remark}
The interval $(0, \varepsilon_0]$ is uniform in $b_1$ and $b_2$ and therefore we my recover the point vortex-vortex patch configuration by letting $b_1$ and  $b_2$ go to $0$. %In such case, \textcolor{red}{we should discuss this}
\end{remark}
The proof of this theorem is done in the spirit of the work \cite{HM} using contour dynamics reformulation. It is a known fact that the initial vorticity $\omega_{0,\varepsilon}$ with the velocity $v_{0,\varepsilon}$ generates a rotating solution, with constant angular velocity $\Omega$ around the point $Z$ on the real axis, if and only if
\begin{equation}\label{eq:rot}
\textnormal{Re}\bigg\{\Big(-i\Omega\big(\overline{z}-Z\big)-\overline{v(z)}\Big)\vec{n}\bigg\}=0\quad \forall z\in \partial \tilde{D}_1^\varepsilon \cup \partial \tilde{D}_2^{\varepsilon},
\end{equation}
where $\vec{n}$ is the exterior unit normal vector to the boundary at the point $z$. According to the  Biot-Savart law combined with Green-Stokes formula , we can write
\begin{align*}
\overline{v(z)} &=\frac{i\gamma_1}{2\varepsilon^2b_1^2}\fint_{\partial \tilde{D}_1^\varepsilon}\frac{\overline{\xi}-\overline{z}}{\xi-z}d\xi+\frac{i\gamma_2}{2\varepsilon^2 b_2^2}\fint_{\partial \tilde{D}_2^\varepsilon}\frac{\overline{\xi}-\overline{z}}{\xi-z}d\xi, \quad\forall z\in \CC. 
\end{align*}
Following the same line of \cite{HM} we can remove the singularity in $\varepsilon$ by  slightly perturbing  the unit disc with a small amplitude of order $\varepsilon$ and taking advantage  of its symmetry. Indeed, we seek for conformal parametrization of the boundaries  $\phi_j:\mathbb{D}^c\to [D_j^\varepsilon ]^c$, in the form
$$
\phi_j(w)= w+\varepsilon b_j f_j(w), \quad \textnormal{with}\quad f_j(w)=\sum_{n\geq 1} \frac{a_n^j}{w^n}, \quad a_n^j\in \RR, j=1,2.
$$
Straightforward computations and substitutions we find that  the conformal mappings are subject to two coupled nonlinear equations defined as follows: for all $w\in \mathbb{T}$ and $ j\in\{1,2\}$,
\begin{equation}\label{eq:steady}
F_j\big(\varepsilon,\Omega,Z,f_1,f_2\big)=\textnormal{Im}\bigg\{\bigg(2\Omega\Big(\varepsilon b_j\overline{\phi_j(w)}+(-1)^jZ-(j-1)d\Big)+J_j^\varepsilon(w)\bigg)w\phi'_j(w)\bigg\}=0,%\forall w\in\mathbb{T} 
\end{equation}
  where $J_\varepsilon$ ranges in  the H\"older space $C^\alpha$ and can be extended for $\varepsilon\in(-\varepsilon_0,\varepsilon_0)$ with $\varepsilon_0>0$. In addition, the functional $F=(F_1,F_2):(-\frac12,\frac12)\times\mathbb{R}\times\mathbb{R}\times X\to Y$ is well-defined and it is of class $C^1$ where
 $$
X\triangleq\bigg\{f\in \big( C^{1+\alpha}(\mathbb{T})\big)^2,\; f(w)=\sum_{n\geq 1}A_n\overline{w}^n,\; A_n\in \RR^2, w\in \mathbb{T}\bigg\}
$$
and 
$$
Y=\bigg\{g\in \big( C^{\alpha}(\mathbb{T})\big)^2,\; g=\sum_{n\geq 1}C_n e_n,\; C_n\in \RR^2, w\in \mathbb{T}\bigg\},\quad e_n(w)=\textnormal{Im}\big\{{w}^n\big\}.
$$
In order to apply the Implicit Function Theorem  we compute the linearized operator around the point vortex pairs leading to % $\big(\varepsilon,f_1,f_2\big)=\big(0,0,0\big)$, it is given by
\begin{align*}
D_{((f_1,f_2)}F(0,\Omega,Z,0,0)(h_1,h_2)(w)&=-\begin{pmatrix}
{\gamma_1}\textnormal{Im}\Big\{h'_1(w)\Big\} \\
{\gamma_2}\,\textnormal{Im}\Big\{h'_2(w)\Big\}
\end{pmatrix},%\quad\textnormal{with}\quad h=(h_1,h_2).
\end{align*}
This operator is not invertible from $X$ to $Y$ but it does from $X$ to $\tilde{Y}$, where
\begin{equation*}
\tilde{Y}\triangleq \bigg\{g\in Y\;:\; C_1=\begin{pmatrix}
0 \\
0 
\end{pmatrix}\bigg\}.
\end{equation*}
The idea to remedy to this defect is to consider the operator: for any $h=(\alpha_1,\alpha_2,h_1,h_2)\in \mathbb{R}\times\mathbb{R}\times X$

\begin{align*}
D_{(\Omega,Z,f_1,f_2)}F (0,\Omega_0,Z_0,0,0)h(w)=&-\dfrac{2\alpha_1 d}{\gamma_1+\gamma_2} \begin{pmatrix}
\gamma_2  \\
\gamma_1
\end{pmatrix}\textnormal{Im}\big\{{w}\big\}- \dfrac{\alpha_2 (\gamma_1+\gamma_2)}{d^2} \begin{pmatrix}
 1 \\
 -1
\end{pmatrix}\textnormal{Im}\big\{{w}\big\}
\\ &-\begin{pmatrix}
{\gamma_1}\textnormal{Im}\Big\{h'_1(w)\Big\} \\
{\gamma_2}\,\textnormal{Im}\Big\{h'_2(w)\Big\}
\end{pmatrix}.
\notag
\end{align*}
which is invertible from $\mathbb{R}\times\mathbb{R}\times X$ to $Y$. 

 This allows to to apply the Implicit Function Theorem and complete the proof of the main theorem.
 
 \vspace{0,2cm}

{\bf {Notation.}}
We need to fix some notation that will  be frequently used along this paper.
 We shall use the symbol $\triangleq$, to define an object.
 We denote by $\mathbb{D}$ the unit disc and its boundary, the unit circle, is denoted by  $\mathbb{T}$. 
 Let $f:\mathbb{T}\to \CC$ be a continuous function. We define  its  mean value by,
$$
\fint_{\mathbb{T}} f(\tau)d\tau\triangleq \frac{1}{2i\pi}\int_{\mathbb{T}}  f(\tau)d\tau,
$$
where $d\tau$ stands for the complex integration.
%%%%%%%%%%%%%%%%%%%%%%%%%%%%%%%%%%%%%%%%%%%%%%%%%%%%%%%%%%%%%%%%%%%%%%%%%%%%%%%%%%%%%%%%%%%%%%%%%%%%%%%%%%%%
\section{Boundary equation}\label{Sec-bound}
In what follows, we shall describe the motion of a co-rotating and a counter-rotating asymmetric pair of patches  in the plane.
  The equations governing the  dynamics of the boundaries   can be thought of as a system  of two steady nonlocal
equations of nonlinear type coupling the conformal parametrization  of the two domains.

\subsection{ Co-rotating vortex pairs }
Let $\varepsilon\in(0,1)$ and consider  two bounded simply connected domains $D_1^\varepsilon$ and $D_2^\varepsilon$, containing the origin and contained in the ball $B(0,2)$. %, with $|D_1|\approx1$ and  $|D_2|\approx 1$. 
For $b_1, b_2\in\mathbb{R}_+$ and $d>2(b_1+b_2)$  we define the domains
\begin{equation}\label{domains}
 \tilde{D}_1^\varepsilon\triangleq\varepsilon b _1 D_1^\varepsilon\quad \textnormal{and}\quad \tilde{D}_2^{\varepsilon}\triangleq-\varepsilon b_2  D_2^\varepsilon+d.
\end{equation}
Given $\gamma_1,\gamma_2\in \mathbb{R}$, we consider the initial vorticity 
\begin{equation}\label{omega0eps}
\omega_{0,\varepsilon}=\frac{\gamma_1}{\varepsilon^2b_1^2}\chi_{\tilde{D}_1^\varepsilon}+\frac{\gamma_2}{\varepsilon^2b_2^2}\chi_{\tilde{D}_2^{\varepsilon}}.
\end{equation}
 As we can readily observe, this initial data is composed
of unequal-sized  pair of simply connected patches with  vorticity magnitudes $\gamma_1$ and $\gamma_2$. 
Assume that the initial vorticity $\omega_{0,\varepsilon}$ gives rise to a rotating pair of patches about some point $Z$ in the complex plan. Since the boundary of $\tilde{D}_1^\varepsilon \cup \tilde{D}_2^\varepsilon$ is  advected by the flow, then it is folklore (see,for instance,
 \cite{HFMV}) that this condition can be expressed by the equation
\begin{equation}\label{eq:rot}
\textnormal{Re}\bigg(-i\Omega\Big(\overline{z}-Z\Big)\vec{n}\bigg)=\textnormal{Re}\Big(\overline{v(z)}\vec{n}\Big)\quad \forall z\in \partial \tilde{D}_1^\varepsilon \cup \partial \tilde{D}_2^{\varepsilon},
\end{equation}
where $\vec{n}$ is the exterior unit normal vector to the boundary %of $\tilde{D}_1^\varepsilon \cup \tilde{D}_2^\varepsilon$ 
at the point $z$.
From the  Biot-Savart law, the velocity can be recovered from the vorticity by 
\begin{align*}
\overline{v(z)} &=\frac{i\gamma_1}{2\pi \varepsilon^2b_1^2}\int_{\tilde{D}_1^\varepsilon}\frac{dA(\zeta)}{\zeta-z}+\frac{i\gamma_2}{2\pi \varepsilon^2 b_2^2}\int_{\tilde{D}_2^{\varepsilon}}\frac{dA(\zeta)}{\zeta-z}, \quad\forall z\in \CC. 
\end{align*}
Therefore, by using  Green-Stokes formula we get 
\begin{align*}
\overline{v(z)} &=\frac{i\gamma_1}{2\varepsilon^2b_1^2}\fint_{\partial \tilde{D}_1^\varepsilon}\frac{\overline{\xi}-\overline{z}}{\xi-z}d\xi+\frac{i\gamma_2}{2\varepsilon^2 b_2^2}\fint_{\partial \tilde{D}_2^\varepsilon}\frac{\overline{\xi}-\overline{z}}{\xi-z}d\xi, \quad\forall z\in \CC. 
\end{align*}
Changing  $\xi$ by $ - \xi+d$ 
  in the last integral gives
  \begin{equation}\label{eq2}
\overline{v(z)} =\frac{i\gamma_1}{2\varepsilon^2b_1}\fint_{\partial \tilde{D}_1^\varepsilon}\frac{ \overline{\xi}-\overline{z}}{ \xi-z}d\xi-\frac{i\gamma_2}{2\varepsilon^2 b_2^2}\fint_{ \partial  \tilde{D}_3^\varepsilon}\frac{ \overline{\xi}+\overline{z}-d}{ \xi+z-d}d\xi
\end{equation}
where we have used the notation 
$$\tilde{D}_3^\varepsilon\triangleq -\tilde{D}_2^\varepsilon+d =  \varepsilon b_2 D_2^\varepsilon.$$
Hence by combining the last identity with \eqref{eq2} and \eqref{eq:rot} we obtain
\begin{align*}
%\left\{ \begin{array}{ll}
&\displaystyle\textnormal{Re}\bigg\{\bigg(2\Omega\big(\overline{z}-Z\big)+\frac{\gamma_1}{\varepsilon^2b_1^2}\fint_{\partial \tilde{D}_1^\varepsilon}\frac{\overline{\xi}-\overline{z}}{\xi-z}d\xi-\frac{\gamma_2}{\varepsilon^2 b_2^2}\fint_{\partial \tilde{D}_3^\varepsilon}\frac{\overline{\xi}+\overline{z}-d}{\xi+z-d}d\xi\bigg)\vec{\tau}\bigg\}=0\quad \forall z\in \partial \tilde{D}_1^\varepsilon , \\
&\displaystyle\textnormal{Re}\bigg\{\bigg(2\Omega\big(\overline{z}+Z-d\big)-\frac{\gamma_1}{\varepsilon^2b_1^2}\fint_{\partial \tilde{D}_1^\varepsilon}\frac{\overline{\xi}+\overline{z}-d}{\xi+z-d}d\xi+\frac{\gamma_2}{\varepsilon^2 b_2^2}\fint_{\partial \tilde{D}_3^\varepsilon}\frac{\overline{\xi}-\overline{z}}{\xi-z}d\xi\bigg)\vec{\tau}\bigg\}=0\quad \forall z\in \partial \tilde{D}_3^\varepsilon %\end{array} \right.
\end{align*}
where $\vec{\tau}$ denotes  a tangent vector to the boundary of $\tilde{D}_1^\varepsilon \cup \tilde{D}_3^\varepsilon$ at the point $z$.
 Observe that when $z\in \partial \tilde{D}_1^\varepsilon$ then  $- z+{d} \not\in \overline{\tilde{D}_3^\varepsilon}$ and conversely; when $z\in \partial \tilde{D}_3^\varepsilon$, then  $-z+d\not\in \overline{\tilde{D}_1^\varepsilon}$.  Hence  according  to  residue theorem, we find that for  $j\in\{1,3\}$,
 $$
\fint_{\partial \tilde{D}_j^\varepsilon}\frac{\overline{\xi}+\overline{z}-d}{\xi+z-d}d\xi=\fint_{\partial \tilde{D}_j^\varepsilon}\frac{\overline{\xi}}{\xi+z-d}d\xi  \quad \forall z\in \partial \tilde{D}_{3-j}^\varepsilon\cdot
$$
Combining this identity with  the change of variables $z\to \varepsilon b_1 z$ and $z\to \varepsilon b_2z $,  which send $\tilde{D}_1^\varepsilon$ to $D_1^\varepsilon$ and $\tilde{D}_3^\varepsilon$ to $D_2^\varepsilon$ respectively, we get
\begin{align*}
%\left\{ \begin{array}{ll}
&\textnormal{Re}\bigg\{\displaystyle\bigg(2\Omega\big(\varepsilon\overline{z}-Z\big)+\displaystyle\frac{\gamma_1}{\varepsilon b_1}\fint_{\partial D_1^\varepsilon}\frac{\overline{\xi}-\overline{z}}{\xi-z}d\xi-{\gamma_2}\fint_{\partial D_2^\varepsilon}\frac{\overline{\xi}}{\varepsilon b_2 \xi+\varepsilon b_1 z-d}d\xi\bigg)\vec{\tau}\bigg\}=0\quad \forall z\in \partial D_1^\varepsilon , \\
&\textnormal{Re}\bigg\{\displaystyle\bigg(2\Omega\big(\varepsilon b\overline{z}+Z-d\big)-\gamma_1\displaystyle\fint_{\partial D_1^\varepsilon}\frac{\overline{\xi}}{\varepsilon b_1 \xi+\varepsilon b_2 z-d}d\xi+\frac{\gamma_2}{\varepsilon b_2}\fint_{\partial D_2^\varepsilon}\frac{\overline{\xi}-\overline{z}}{\xi-z}d\xi\bigg)\vec{\tau}\bigg\}=0\quad \forall z\in  \partial D_2^\varepsilon.
%\end{array} \right.
\end{align*}
The structure of the last system may be unified as follows
\begin{align}\label{eq5}
\textnormal{Re}\bigg\{\bigg(2\Omega \Big(\varepsilon b_j\overline{z}&+(-1)^j\overline{z}_0-(j-1)d\Big)+\displaystyle\frac{\gamma_j}{\varepsilon b_j}\fint_{\partial D_j^\varepsilon}\frac{\overline{\xi}-\overline{z}}{\xi-z}d\xi\\ &-{\gamma_{3-j}}\fint_{\partial D_{3-j}^\varepsilon}\frac{\overline{\xi}}{\varepsilon b_{3-j}\xi+\varepsilon b_j  z-d}d\xi\bigg)\vec{\tau}\bigg\}=0, \; \forall z\in \partial D_j, \quad j=1,2.\notag
\end{align}
We shall look for domains $\{D_j,j=1,2\}$ which are slight perturbation of the unit disc of order $\varepsilon$. In other words, we shall impose the conformal mapping $\phi_j:\mathbb{D}^c\to [D_j^\varepsilon ]^c$ to satisfy the expansions
$$
\phi_j(w)= w+\varepsilon b_j f_j(w), \quad \textnormal{with}\quad f_j(w)=\sum_{n\geq 1} \frac{a_n^j}{w^n}, \quad a_n^j\in \RR, j=1,2.
$$
Because a tangent vector to the boundary at  $z=\phi_j(w)$ is determined by
$$
\vec{\tau}\big(\phi_j(w)\big)=iw\phi'_j(w),
$$
 then by a change of variables the steady vortex pairs system \eqref{eq5} becomes: for all  $w\in \mathbb{T}$, 
\begin{equation}\label{eq:steady}
\textnormal{Im}\bigg\{\bigg(2\Omega\Big(\varepsilon b_j\overline{\phi_j(w)}+(-1)^jZ-(j-1)d\Big)+{\gamma_{j}}I_j^\varepsilon(w)-\triangleq{\gamma_i}K_{3-jj}^\varepsilon(w)\bigg)w\phi'_j(w)\bigg\}=0,\quad j=1,2%\forall w\in\mathbb{T} 
\end{equation}
 where
\begin{align*}
I_j^\varepsilon(w)&\triangleq\frac{1}{\varepsilon b_j}\fint_{\mathbb{T}}\frac{\overline{\phi_j(\tau)}-\overline{\phi_j(w)}}{\phi_j(\tau)-\phi_j(w)}\phi'_j(\tau)d\tau
\end{align*}
and
\begin{align*}
K_{ij}^\varepsilon(w)&\displaystyle\fint_{\mathbb{T}}\frac{\overline{\phi_i(\tau)}\phi'_i(\tau)}{\varepsilon b_i \phi_i(\tau)+\varepsilon b_j\phi_j(w)- d}d\tau.%\\ 
\end{align*}
Now, we shall see how to remove the singularity in $\varepsilon$  from the full non linearity $I_j^\varepsilon(w)$. To alleviate the discussion, we use the notation
$$A=\tau-w\quad\textnormal{ and}\quad B_j=f_j(\tau)-f_j(w).$$
Thus
\begin{align*}
I_j^\varepsilon(w)&=\frac{1}{\varepsilon b_j}\fint_{\mathbb{T}}\frac{\overline{A}+\varepsilon b_j\overline{B_j}}{A+\varepsilon b_j B_j}\Big[1+\varepsilon b_j f_j'(\tau)\Big]d\tau
\\ &=\fint_{\mathbb{T}}\frac{\overline{A}}{A}f'_j(\tau)d\tau+\varepsilon b_j\fint_{\mathbb{T}}\frac{A\overline{B}_j-\overline{A}B_j}{A(A+\varepsilon  b_jB_j)}f'_j(\tau)d\tau+\fint_{\mathbb{T}}\frac{A\overline{B}_j-\overline{A}B_j}{A^2}d\tau\\ & -\varepsilon b_j\fint_{\mathbb{T}}\frac{\big(A\overline{B}_j-\overline{A}B_j\big)B_j}{A^2(A+\varepsilon b_j B_j)}d\tau+\frac{1}{\varepsilon b_j}\fint_{\mathbb{T}}\frac{\overline{A}}{A}d\tau\cdot
\end{align*}
From the identity 
$$
\fint_{\mathbb{T}}\frac{\overline{A}}{A}d\tau=\fint_{\mathbb{T}}\frac{\overline{w}-\overline{\tau}}{w-\tau}d\tau=-\overline{w}
$$
and by the residue theorem
$$
\fint_{\mathbb{T}}\frac{\overline{A}}{A}f'_j(\tau)d\tau=0,\quad\fint_{\mathbb{T}}\frac{A\overline{B}_j-\overline{A}B_j}{A^2}d\tau=0.
$$ 
Therefore
\begin{align}\label{eqides}
\textnormal{Im}\Big\{I_j^\varepsilon(w)w\phi'_j(w)\Big\}&=\varepsilon b_j
\textnormal{Im}\bigg\{w\Big(1+\varepsilon b_j f'_j(w)\Big)\fint_{\mathbb{T}}\frac{A\overline{B}_j-\overline{A}B_j}{A(A+\varepsilon  b_jB_j)}\Big[f'_j(\tau)-\frac{B_j}{A}\Big]d\tau \bigg\}\\ & - \textnormal{Im}\Big\{f'_j(w)\Big\} .\notag
\end{align}
Inserting  the last identity  in the system \eqref{eq:steady} we get %for every $j\in\{1,2\}$
\begin{align}
F_j(\varepsilon,g)(w)=0\quad \forall w\in \mathbb{T}
\end{align}
where
$$
g\triangleq (\Omega,Z,f_1,f_2)
$$
and
\begin{align}\label{Fj}
F_j(\varepsilon,g)&\triangleq\textnormal{Im}\Bigg\{2\Omega\bigg[\varepsilon b_j\big(\overline{w}+\varepsilon  b_j\overline{f_j(w)}\big)+(-1)^j{Z}-(j-1)d\bigg]w\Big(1+\varepsilon b_j f'_j(w)\Big)
-{\gamma_j} f'_j(w)\\ &+\varepsilon b_j {\gamma_j} w\Big(1+\varepsilon b_j f'_j(w)\Big)\fint_{\mathbb{T}}\frac{A\overline{B}_j-\overline{A}B_j}{A(A+\varepsilon b_j B_j)}\Big[f'_j(\tau)-\frac{B_j}{A}\Big]d\tau
\notag\\ & -\gamma_{3-j}w\Big(1+\varepsilon  b_jf'_j(w)\Big)\displaystyle\fint_{\mathbb{T}}\frac{\big(\overline{\tau}+\varepsilon b_{3-j} \overline{ f_{3-j}(\tau)}\big)\big(1+\varepsilon b_{3-j} f'_{3-j}(\tau)\big)}{\varepsilon \big(b_{3-j}\tau+b_j w\big)+\varepsilon^2\big( b_{3-j}^2 f_{3-j}(\tau)+b_j^2 f_j(w)\big)- d}d\tau\notag\Bigg\}\\ & \triangleq\textnormal{Im}\Bigg\{F_{1j}(\varepsilon,g)(w)+F_{2j}(\varepsilon,g)(w)+F_{3j}(\varepsilon,g)(w)\notag\Bigg\}\
\end{align}
Next we use the following notation
\begin{equation}\label{eq:G}
F(\varepsilon,g)(w)\triangleq \Big(F_1(\varepsilon,g)(w),F_2(\varepsilon,g)(w)\Big)\quad \forall w\in \mathbb{T}\cdot
\end{equation}

\subsection{Counter-rotating vortex pair }
We shall consider  two bounded simply connected domains $D_1^\varepsilon$ and $D_2^\varepsilon$, containing the origin and contained in the ball $B(0,2)$, . 
For $b_1, b_2\in(0,\infty)$ and $d>b_1+b_2$  we set
\begin{equation}\label{domains2}
\tilde{D}_1^\varepsilon\triangleq\varepsilon b _1 D_1^\varepsilon\quad \textnormal{and}\quad \tilde{D}_2^{\varepsilon}\triangleq-\varepsilon b_2  D_2^\varepsilon+d.
\end{equation}
Given $\gamma_1,\gamma_2\in \mathbb{R}$, we consider the initial vorticity 
\begin{equation}\label{omega0eps2}
\omega_{0,\varepsilon}=\frac{\gamma_1}{\varepsilon^2b_1^2}\chi_{\tilde{D}_1^\varepsilon}-\frac{\gamma_2}{\varepsilon^2b_2^2}\chi_{\tilde{D}_2^{\varepsilon}}.
\end{equation}
We assume that this unequal-sized  pair of simply connected patches with  vorticity magnitudes $\gamma_1$ and $-\gamma_2$ travels steadily in $(Oy)$ direction with uniform velocity $U$. Then in the moving frame the pair of the patches is stationary and consequently, =
\begin{equation}\label{eq:rot2}
\textnormal{Re}\Big\{\big(\overline{v(z)}+iU\big)\vec{n}\Big\}\quad \forall z\in \partial \tilde{D}_1^\varepsilon \cup \partial \tilde{D}_2^{\varepsilon},
\end{equation}
where $\vec{n}$ is the exterior unit normal vector to the boundary of $ \tilde{D}_1^\varepsilon \cup \tilde{D}_2^\varepsilon$ at the point $z$.
From \eqref{eq2} one has
\begin{align*}
%\left\{ \begin{array}{ll}
&\displaystyle\textnormal{Re}\bigg\{\bigg(2U+\frac{\gamma_1}{\varepsilon^2b_1^2}\fint_{\partial \tilde{D}_1^\varepsilon}\frac{\overline{\xi}-\overline{z}}{\xi-z}d\xi+\frac{\gamma_2}{\varepsilon^2 b_2^2}\fint_{\partial  \tilde{D}_3^\varepsilon}\frac{\overline{\xi}+\overline{z}-d}{\xi+z-d}d\xi\bigg)\vec{\tau}\bigg\}=0\quad \forall z\in \partial \tilde{D}_1^\varepsilon , \\
&\displaystyle\textnormal{Re}\bigg\{\bigg(2U+\frac{\gamma_1}{\varepsilon^2b_1^2}\fint_{\partial \tilde{D}_1^\varepsilon}\frac{\overline{\xi}+\overline{z}-d}{\xi+z-d}d\xi+\frac{\gamma_2}{\varepsilon^2 b_2^2}\fint_{\partial  \tilde{D}_3^\varepsilon}\frac{\overline{\xi}-\overline{z}}{\xi-z}d\xi\bigg)\vec{\tau}\bigg\}=0\quad \forall z\in \partial \tilde{D}_3^\varepsilon %\end{array} %\right.
\end{align*}
where we have used the notation $$\tilde{D}_3^\varepsilon\triangleq  \varepsilon b_2 D_2^\varepsilon.$$
and $\vec{\tau}$ denote for  tangent vector to the boundary of $ \tilde{D}_1^\varepsilon \cup \tilde{D}_3^\varepsilon$ at the point $z$.
We can easily verify that when $z\in \partial \tilde{D}_1^\varepsilon$, $- z+{d} \not\in \overline{\tilde{D}_3^\varepsilon}$. Therefore, if $z\in \partial \tilde{D}_3^\varepsilon$ then  $-z+d\not\in \overline{\tilde{D}_1^\varepsilon}$. Thus, by  the residue theorem we conclude that for all $i,j\in\{1,3\}$ and $i\neq j$, we have
$$
\fint_{\partial  \tilde{D}_j^\varepsilon}\frac{\overline{\xi}+\overline{z}-d}{\xi+z-d}d\xi=\fint_{\partial  \tilde{D}_j^\varepsilon}\frac{\overline{\xi}}{\xi+z-d}d\xi  \quad \forall z\in \partial \tilde{D}_{3-j}^\varepsilon\cdot
$$
Inserting the last identity into the system above and changing  $z\to \varepsilon b_1 z$ and $z\to \varepsilon b_2z $ we find
%\begin{equation*}
%\left\{ \begin{array}{ll}
%\textnormal{Re}\bigg\{\displaystyle\bigg(2U+\displaystyle\frac{\gamma}{\varepsilon b_1}\fint_{\partial D_1^\varepsilon}\frac{\overline{\xi}-\overline{z}}{\xi-z}d\xi+{\gamma}\fint_{\partial D_2^\varepsilon}\frac{\overline{\xi}}{\varepsilon b_2 \xi+\varepsilon b_1 z-d}d\xi\bigg)\vec{\tau}\bigg\}=0\quad \forall z\in \partial D_1^\varepsilon , &\\
%\textnormal{Re}\bigg\{\displaystyle\bigg(2U+\gamma\displaystyle\fint_{\partial D_1^\varepsilon}\frac{\overline{\xi}}{\varepsilon b_1 \xi+\varepsilon b_2 z-d}d\xi+\frac{\gamma}{\varepsilon b_2}\fint_{\partial D_2^\varepsilon}\frac{\overline{\xi}-\overline{z}}{\xi-z}d\xi\bigg)\vec{\tau}\bigg\}=0\quad \forall z\in  \partial D_2^\varepsilon.
%\end{array} \right.
%\end{equation*}
%The structure of the last system may be unified as follows
\begin{equation}\label{eq52}
\textnormal{Re}\bigg\{\bigg(2U+\displaystyle\frac{\gamma_j}{\varepsilon b_j}\fint_{\partial D_j^\varepsilon}\frac{\overline{\xi}-\overline{z}}{\xi-z}d\xi+{\gamma_{3-j}}\fint_{\partial D_{3-j}^\varepsilon}\frac{\overline{\xi}}{\varepsilon b_{3-j}\xi+\varepsilon b_j  z-d}d\xi\bigg)\vec{\tau}\bigg\}=0, \; \forall z\in \partial D_j
\end{equation}
with $j=1,2$. % and $i\neq j$.% and we have used the notation $\gamma=1$, $b_1=1$ and $\gamma=\gamma$, $b_2=b$.
We shall now use the conformal parametrization of the boundaries $\phi_j:\mathbb{D}^c\to [D_j^\varepsilon ]^c$,
$$
\phi_j(w)= w+\varepsilon b_j f_j(w), \quad \textnormal{with}\quad f_j(w)=\sum_{n\geq 1} \frac{a_n^j}{w^n}, \quad a_n^j\in \RR, j=1,2.
$$
Since the tangent vector to the boundary at  $z=\phi_j(w)$ is given by
$$
\vec{\tau}\big(\phi_j(w)\big)=iw\phi'_j(w),
$$
 then by a change of variables the  system \eqref{eq52} becomes
\begin{equation}\label{eq:steady2}
\textnormal{Im}\bigg\{\bigg(2U+{\gamma_j}I_j^\varepsilon(w)+{\gamma_{3-j}}K_{j-3j}^\varepsilon(w)\bigg)w\phi'_j(w)\bigg\}=0,\quad j=1,2%\forall w\in\mathbb{T} 
\end{equation}
 for all $w\in \mathbb{T}$, where
\begin{align*}
I_j^\varepsilon(w)&\triangleq\frac{1}{\varepsilon b_j}\fint_{\mathbb{T}}\frac{\overline{\phi_j(\tau)}-\overline{\phi_j(w)}}{\phi_j(\tau)-\phi_j(w)}\phi'_j(\tau)d\tau,%\\ &=\frac{\gamma}{b_j\varepsilon}\fint_{\mathbb{T}}\frac{\overline{w}-\overline{\tau}+\varepsilon\big(\overline{f_j(w)}-\overline{f_j(\tau )}\big)}{w-\tau+\varepsilon\big(f_j(w)-f_j(\tau )\big)}\Big[1+\varepsilon f_j'(\tau)\Big]d\tau
\end{align*}
\begin{align*}
K_{ij}^\varepsilon(w)&\triangleq\displaystyle\fint_{\mathbb{T}}\frac{\overline{\phi_{i}(\tau)}\phi'_i(\tau)}{\varepsilon b_i \phi_i(\tau)+\varepsilon b_j\phi_j(w)- d}d\tau.%\\ &=\gamma\displaystyle\fint_{\mathbb{T}}\frac{\big(\overline{\tau}+\varepsilon \overline{ f_i(\tau)}\big)\big(1+\varepsilon f'_i(\tau)\big)}{\varepsilon \big(b_i\tau+b_j w\big)+\varepsilon^2\big( b_i f_i(\tau)+b_j f_j(w)\big)- d}d\tau
\end{align*}
As in the rotating case, from \eqref{eqides}, one has 
\begin{align*}
\textnormal{Im}\Big\{I_j^\varepsilon(w)w\phi'_j(w)\Big\}&=\varepsilon b_j
\textnormal{Im}\bigg\{w\Big(1+\varepsilon b_j f'_j(w)\Big) \fint_{\mathbb{T}}\frac{A\overline{B}_j-\overline{A}B_j}{A(A+\varepsilon  b_jB_j)}\Big[f'_j(\tau)-\frac{B_j}{A}\Big]d\tau \bigg\}- \textnormal{Im}\Big\{f'_j(w)\Big\},
\end{align*}
where we have  used the notation
$$A=\tau-w\quad\textnormal{ and}\quad B_j=f_j(\tau)-f_j(w).$$
%we obtain $w\in\mathbb{T}$,
%\begin{align*}
%I_j^\varepsilon(w) &={\gamma_j}\bigg[\fint_{\mathbb{T}}\frac{\overline{A}+\varepsilon b_j\overline{B}_j}{A+\varepsilon b_j B_j}f_j'(\tau)d\tau+\fint_{\mathbb{T}}\frac{A\overline{B}_j-\overline{A}B_j}{A(A+\varepsilon  b_jB_j)}d\tau-\frac{1}{\varepsilon b_j}\overline{w}\bigg],
%\end{align*}
%This leads to 
%\begin{align*}
%\textnormal{Im}\Big\{I_j^\varepsilon(w)w\phi'_j(w)\Big\}&=
%{\gamma_j}\textnormal{Im}\bigg\{\bigg[\fint_{\mathbb{T}}\frac{\overline{A}+\varepsilon b_j\overline{B}_j}{A+\varepsilon b_j B_j}f_j'(\tau)d\tau+\fint_{\mathbb{T}}\frac{A\overline{B}_j-\overline{A}B_j}{A(A+\varepsilon b_jB_j)}d\tau\bigg]w\Big(1+\varepsilon b_j f'_j(w)\Big) \bigg\}\\&- {\gamma_j}\textnormal{Im}\Big\{f'_j(w)\Big\} .
%\end{align*}
Inserting  the last identity  into  \eqref{eq:steady2} we get 
\begin{equation}
F(\varepsilon,g)(w)\triangleq\big(F_1(\varepsilon,g)(w),F_2(\varepsilon,g)(w)\big) =0,\quad\forall w\in\mathbb{T},\quad  j=1,2
\end{equation}
$$
g\triangleq (U,\gamma_2,f_1,f_2)
$$
and
  for all $w\in\mathbb{T}$ with $j=1,2$.
  \begin{align}\label{fj2}
&F_j(\varepsilon,g)\triangleq\textnormal{Im}\Bigg\{2Uw\Big(1+\varepsilon b_j f'_j(w)\Big)-{\gamma_j} f'_j(w)
\\ &+\varepsilon b_j {\gamma_j} w\Big(1+\varepsilon b_j f'_j(w)\Big)\fint_{\mathbb{T}}\frac{A\overline{B}_j-\overline{A}B_j}{A(A+\varepsilon b_j B_j)}\Big[f'_j(\tau)-\frac{B_j}{A}\Big]d\tau\notag\\ &+\gamma_{3-j}w\Big(1+\varepsilon b_j f'_j(w)\Big)\displaystyle\fint_{\mathbb{T}}\frac{\big(\overline{\tau}+\varepsilon b_{3-j} \overline{ f_{3-j}(\tau)}\big)\big(1+\varepsilon f'_{3-j}(\tau)\big)}{\varepsilon \big(b_{3-j}\tau+b_j w\big)+\varepsilon^2\big( b_{3-j}^2 f_{3-j}(\tau)+b_j^2 f_j(w)\big)- d}d\tau\notag\Bigg\}
\\&\triangleq\textnormal{Im}\Big\{F_{1j}(\varepsilon,g)+F_{2j}(\varepsilon,g)+F_{3j}(\varepsilon,g)\Big\}.\notag
\end{align}

\section{Regularity of the nonlinear functional}
This section is devoted to the regularity study of the nonlinear functional $F$ introduced
in \eqref{eq:G} and \eqref{fj2} and which defines the V-states equations. We proceed first with the  Banach spaces $X$ and $Y$ of H\"older type  to which the Implicit Function Theorem will be
applied.
Recall that for given $\alpha\in(0,1)$, we denote by $C^\alpha $ the space of continuous functions $f:\mathbb{T}\to \mathbb{C}$ such that
$$
\|f\|_{C^\alpha(\mathbb{T})}\triangleq \|f\|_{L^\infty(\mathbb{T})}+\textnormal{sup}_{\tau\neq w\in \mathbb{T}}\frac{|f(\tau)-f(w)|}{|\tau-w|^\alpha}<\infty\cdot
$$ 
For any integer $n\in \mathbb{N}$, the space $C^{n+\alpha}(\mathbb{T})$ stands for the set of functions $f$ of class $C^n$ whose $n-$th order derivative is H\"older continuous with exponent $\alpha$.  This space is equipped with
the usual norm
$$
\|f\|_{C^{\alpha+n}(\mathbb{T})}\triangleq \|f\|_{L^\infty(\mathbb{T})}+\Big\Vert \frac{d^n f}{dw^n}\Big\Vert_{C^\alpha(\mathbb{T})} \cdot
$$ 
Now, consider the spaces
$$
X\triangleq\bigg\{f\in \big( C^{1+\alpha}(\mathbb{T})\big)^2,\; f(w)=\sum_{n\geq 1}A_n\overline{w}^n,\; A_n\in \RR^2, w\in \mathbb{T}\bigg\},
$$
$$
Y=\bigg\{g\in \big( C^{\alpha}(\mathbb{T})\big)^2,\; g(w)=\sum_{n\geq 1}C_n e_n,\; C_n\in \RR^2, w\in \mathbb{T}\bigg\},\quad e_n=\textnormal{Im}\big\{{w}^n\big\},
$$
\begin{equation}\label{spaceZ}
\tilde{Y}\triangleq \bigg\{g\in Y\;:\; C_1=\begin{pmatrix}
0 \\
0 
\end{pmatrix}\bigg\}.
\end{equation}
For $r > 0$ we denote by $B_r$ the open ball of $X$ centered at zero and of radius $r$,
$$
B_r\triangleq\bigg\{f\in X,\quad ||f||_{C^{\alpha+n}(\mathbb{T})}\leq r\bigg\}\cdot
$$
It is straightforward that for any $f\in B_r$ the function $w\mapsto \phi(w) = w +\varepsilon f(w)$ is conformal
on $\mathbb{C}\backslash\overline{\mathbb{D}}$ provided that $r,\varepsilon  < 1$. Moreover according to Kellog-Warshawski result, the
boundary of $\phi(\mathbb{C}\backslash\overline{\mathbb{D}})$ is a Jordan curve of class $C^{
n+\alpha}$. 
\subsection{Co-rotating vortex pairs}
We propose to prove the following result concerning the regularity of $F$.

\begin{proposition}\label{prop1}
The following assertions hold true.
\begin{enumerate}
\item The function $F$ %defined in \eqref{eq:G} 
can be extended to $C^1$ function from $\big(-\frac12,\frac12\big)\times \RR\times\RR \times B_1\to Y$.
\item Two initial point vortex $\gamma_1\pi \delta_0$ and $\gamma_2\pi \delta_{d}$ with $\gamma_1\neq-\gamma_2$ rotate uniformly about the point $$Z_0\triangleq\frac{ d\gamma_2}{\gamma_1+\gamma_2}$$ with the angular velocity
\begin{equation*}
\Omega_0\triangleq\frac{\gamma_1+\gamma_2}{2d^2}\cdot
\end{equation*}
\item For all $h=(\alpha_1,\alpha_2,h_1,h_2)\in \mathbb{R}\times\mathbb{R}\times X$ one has
\begin{align*}
D_{g}F (0,g_0)h(w)=&-\dfrac{2\alpha_1 d}{\gamma_1+\gamma_2} \begin{pmatrix}
\gamma_2  \\
\gamma_1
\end{pmatrix}\textnormal{Im}\big\{{w}\big\}- \dfrac{\alpha_2 (\gamma_1+\gamma_2)}{d^2} \begin{pmatrix}
 1 \\
 -1
\end{pmatrix}\textnormal{Im}\big\{{w}\big\}-\begin{pmatrix}
{\gamma_1}\textnormal{Im}\Big\{h'_1(w)\Big\} \\
{\gamma_2}\,\textnormal{Im}\Big\{h'_2(w)\Big\}
\end{pmatrix}
\notag
\end{align*}
with
$$
g\triangleq(\Omega,Z,f_1,f_2)\quad \textnormal{and}\quad g_0\triangleq(\Omega_0,Z_0,0,0).
$$
\item  The linear operator 
$D_{g}F (0,g_0): \mathbb{R}\times\mathbb{ R} \times X\to  Y$ is an isomorphism.
\end{enumerate}

\end{proposition}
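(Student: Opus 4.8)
The plan is to establish the four assertions in order, the first being the analytic core and the remaining three following from an explicit computation at $\varepsilon=0$. For (1) I would treat separately the three pieces $F_{1j},F_{2j},F_{3j}$ of \eqref{Fj}. The term $F_{1j}$ is polynomial in $\Omega,Z$ and in $f_j,f_j'$, hence $C^1$ once multiplication and $f\mapsto f'$ are known to act boundedly on the H\"older scale. The interaction term $F_{3j}$ has denominator $\varepsilon(b_{3-j}\tau+b_jw)+\varepsilon^2(\cdots)-d$ which, for $f\in B_1$ and $|\varepsilon|<\tfrac12$, is bounded away from zero because $d$ dominates the $\varepsilon$-corrections; it is therefore a non-singular integral depending smoothly on all arguments, $\varepsilon$ included. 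The delicate piece is the Cauchy-type term $F_{2j}$: the naive $\varepsilon^{-1}$ in the definition of $I_j^\varepsilon$ is, by the identity \eqref{eqides}, confined to a purely real term annihilated by the outer $\textnormal{Im}$, so that $F_{2j}$ survives only with the smoothing weight $\varepsilon b_j$ in front of an integral whose kernel $\tfrac{A\overline B_j-\overline A B_j}{A(A+\varepsilon b_jB_j)}$ is nonsingular thanks to the divided-difference structure $B_j/A=(f_j(\tau)-f_j(w))/(\tau-w)$. I would then verify boundedness $C^{1+\alpha}\to C^\alpha$ and $C^1$ dependence on $(\varepsilon,f_j)$ by differentiating under the integral sign and invoking the standard H\"older continuity estimates for such kernels, exactly as in \cite{HM}.

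Assertion (2) is a direct point-vortex calculation. From the complex Biot--Savart law, the field generated at $z$ by a single vortex $\gamma\pi\delta_{z_0}$ is obtained from $\overline{v(z)}=-\tfrac{i\gamma}{2}(\overline z-\overline{z_0})^{-1}$; evaluating the field of one vortex at the position of the other (a vortex does not advect itself) and imposing rigid rotation $v(z)=i\Omega_0(z-Z_0)$ at $z=0$ and $z=d$ yields the two relations $\Omega_0Z_0=\tfrac{\gamma_2}{2d}$ and $\Omega_0(d-Z_0)=\tfrac{\gamma_1}{2d}$. Adding them gives $\Omega_0=\tfrac{\gamma_1+\gamma_2}{2d^2}$, whence $Z_0=\tfrac{\gamma_2 d}{\gamma_1+\gamma_2}$, both well defined precisely because $\gamma_1+\gamma_2\neq0$.

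For (3) I would set $\varepsilon=0$ in \eqref{Fj}. There $F_{2j}$ vanishes by its prefactor $\varepsilon b_j$, while $F_{3j}$ collapses to $\tfrac{\gamma_{3-j}}{d}\,w\,\fint_{\mathbb T}\overline\tau\,d\tau=\tfrac{\gamma_{3-j}}{d}\,w$ using the residue value $\fint_{\mathbb T}\overline\tau\,d\tau=1$, so that $F_j(0,g)=\textnormal{Im}\big\{2\Omega[(-1)^jZ-(j-1)d]w-\gamma_jf_j'(w)+\tfrac{\gamma_{3-j}}{d}w\big\}$. Differentiating in $g=(\Omega,Z,f_1,f_2)$ at $g_0$, the $f_{3-j}$-direction does not appear, the $h_j$-direction contributes $-\gamma_j\textnormal{Im}\{h_j'(w)\}$, and the $\alpha_1,\alpha_2$ directions produce the two $\textnormal{Im}\{w\}$ terms; substituting $\Omega_0,Z_0$ from (2) and simplifying the coefficient $2(Z_0-d)=-\tfrac{2\gamma_1d}{\gamma_1+\gamma_2}$ in the case $j=2$ reproduces the stated formula.

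Finally, (4) follows from the block structure of the operator in (3) relative to the Fourier basis $\{e_m\}$ of $Y$. Writing $h_j=\sum_{n\ge1}a_n^j\overline w^n$ one has $\textnormal{Im}\{h_j'(w)\}=\sum_{m\ge2}(m-1)a_{m-1}^je_m(w)$, so the $h$-part of $D_gF(0,g_0)$ feeds exactly the modes $e_m$ with $m\ge2$, while the $(\alpha_1,\alpha_2)$-part feeds only $e_1$. Hence $D_gF(0,g_0)=L_1\oplus L_2$ on $Y_1\oplus\tilde Y$, where $Y_1$ is the $e_1$-line: the $2\times2$ map $L_1$ has direction vectors $\binom{\gamma_2}{\gamma_1}$ and $\binom{1}{-1}$ with determinant $-(\gamma_1+\gamma_2)\neq0$, hence is invertible, and $L_2(h_1,h_2)=-(\gamma_1\textnormal{Im}\{h_1'\},\gamma_2\textnormal{Im}\{h_2'\})$ is an isomorphism $X\to\tilde Y$ provided $\gamma_1\gamma_2\neq0$, its inverse being the mode-wise antidifferentiation $c_m\mapsto a_{m-1}=-c_m/(\gamma_j(m-1))$, which is bounded $C^\alpha\to C^{1+\alpha}$ because recovering an anti-holomorphic function from its imaginary part is a conjugate-function (Hilbert-transform) operation bounded on $C^\alpha(\mathbb T)$. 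Combining boundedness of $L_1\oplus L_2$ and of its inverse gives the claimed isomorphism. The main obstacle of the whole proposition is assertion (1), and within it the uniform-in-$\varepsilon$ $C^1$ control of the Cauchy integral $F_{2j}$ on $C^{1+\alpha}$; once that and the $\varepsilon=0$ reduction are secured, assertions (2)--(4) are essentially algebraic.
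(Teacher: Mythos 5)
Your proposal is correct and follows essentially the same route as the paper: part (1) by delegating the Cauchy-type terms to the estimates of \cite{HM} once the identity \eqref{eqides} confines the $\varepsilon^{-1}$ singularity to a real constant killed by $\textnormal{Im}$, and treating $F_{3j}$ as a smooth kernel; parts (2)--(3) by the same $\varepsilon=0$ reduction (your relations $\Omega_0Z_0=\tfrac{\gamma_2}{2d}$ and $\Omega_0(d-Z_0)=\tfrac{\gamma_1}{2d}$ are exactly the equations the paper extracts from $F(0,\Omega,Z,0,0)=0$); and part (4) by the same block decomposition into the $e_1$ mode and $\tilde{Y}$, with the $2\times2$ determinant proportional to $-(\gamma_1+\gamma_2)$. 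The only blemishes are cosmetic: the conjugation bars in your single-vortex field are misplaced (it should read $\overline{v(z)}=-\tfrac{i\gamma}{2}(z-z_0)^{-1}$, immaterial at the real evaluation points $z=0,d$), while your explicit conditions ($\gamma_1\gamma_2\neq0$, Hölder boundedness of the mode-wise inverse via the conjugate function) merely make precise what the paper leaves implicit.
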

\begin{proof}
{\bf (1)} Notice that  the part $F_{1j}+F_{2j}$ defined in \eqref{Fj}  appears identically  in the boundary equation of co-rotating  symmetric pairs  and its  regularity was   discussed in the paper \cite{HM}.  The only term that one should care about is $F_{3j}$ describing the interaction between the boundaries of the two patches which are supposed to be disjoint. Therefore the  involved kernel is sufficiently smooth and it does not carry significant difficulties in the treatment. and can be dealt with in a very classical way. \\\\
{\bf(2)} According to the formulation  developed in Section \ref{Sec-bound}  the two points vortex system is a solution  to the equation $F(0,\Omega,Z,0,0)= 0$. In this case
we can easily check that 
\begin{align*}
F_j(0,\Omega,Z,0,0)(w)&=\bigg[2\Omega\Big((-1)^jZ+(1-j)d\Big)+\frac{\gamma_{3-j}}{d}\bigg]\textnormal{Im}\big\{w\big\}.
\end{align*}
Therefore $F_j(0,\Omega,Z,0,0)=0$ if and only if
$$
-2\Omega Z+\frac{\gamma_2}{d}=0\quad\textnormal{and}\quad 2\Omega\Big(Z-d\Big)+\frac{\gamma_1}{d}=0.
$$
Thus,
\begin{equation}\label{omz0}
\Omega=\Omega_0\triangleq\frac{\gamma_1+\gamma_2}{2d^2}\quad\textnormal{and}\quad Z=Z_0\triangleq\frac{d\gamma_2}{\gamma_1+\gamma_2}\cdot
\end{equation}
%As expected.
%Those quantities  represent the angular velocity of rotating two point vortices with strengths $\gamma_1$, $\gamma_2$ and the center of mass between them.\\\\
{\bf(3)-(4)} Since $F = (F_1, F_2)$ then for  given $h=(h_1,h_2)\in X$, we have
\begin{align*}
D_{f}F(0,\Omega,Z,0,0)h(w)&=\begin{pmatrix}
\partial_{f_1}F_1(0,\Omega,Z,0,0)h_1(w)+\partial_{2}F_1(0,\Omega,Z,0,0)h_2(w) \\
\partial_{f_1}F_2(0,\Omega,Z,0,0)h_1(w)+\partial_{f_2}F_2(0,\Omega,Z,0,0)h_2(w)
\end{pmatrix}.
\end{align*}
 The G\^ateaux derivative  of the function $F_j$ at $(0,\Omega,b,0,0)$ in the direction $h_j$ is
given by 
\begin{align*}
\partial_{f_j}F_j(0,\Omega,Z,0,0)h_j(w)&=\frac{d}{dt}\Big[F_j(0,\Omega,Z,th_j,0)\Big]_{t=0}(w)
%\\&={\gamma_j}\textnormal{Im}\bigg\{-h'_j(w)+w\fint_{\mathbb{T}}\frac{\overline{w}-\overline{\tau}}{w-\tau}h_j'(\tau)d\tau\\ & +2iw\fint_{\mathbb{T}}\frac{\textnormal{Im}\big\{(w-\tau)h_j(\overline{w})-h_j(\overline{\tau})\big\}}{(w-\tau)^2}d\tau\bigg\}.
%\end{alignat*}
%By the residue theorem, we readily obtain
%\begin{align*}
%\partial_{f_j}F_j(0,\Omega,Z,0,0)h_j
\\ &=-{\gamma_j}\textnormal{Im}\Big\{h'_j(w)\Big\}.
\end{align*}
On the other hand, straightforward  computations allow to get
 \begin{alignat*}{2}
\partial_{f_{3-j}}F_j(0,\Omega,Z,0,0)h_{3-j}(w)&=\frac{d}{dt}\Big[F_j&&(0,\Omega,Z,0,th_{3-j})\Big]_{t=0}(w)\\&=0.
\end{alignat*}
%Hence, the differential  of $F$ around $(0,\Omega,Z,0,0)$ in the direction
%$h=(h_1,h_2)$ is given by
%\begin{align*}
%D_{f}F(0,\Omega,Z,0,0)h(w)&=\begin{pmatrix}
%-{\gamma_1}\textnormal{Im}\Big\{h'_1(w)\Big\} \\
%-{\gamma_2}\textnormal{Im}\Big\{h'_2(w)\Big\}
%\end{pmatrix}.
%\end{align*}
% Thus, similarly to \cite{HM}   we conclude that  $D_{f}F(0,\Omega,Z,0,0):X\to Z$ is an isomorphism. \\\\
%{\bf(4)-(5)} 
From \eqref{Fj} one can easily check that
%\begin{align*}
%F_j(0,\Omega,Z,f_1,f_2)&=\textnormal{Im}\bigg\{2\Omega \,{w} \Big((-1)^j
%Z+(1-j){d}\Big)-{\gamma_j}f'_j(w)+{\gamma_j}\fint_{\mathbb{T}} \frac{\overline{w}-\overline{\tau}}{w-\tau}f'_j(\tau)d\tau
%%\\ &+{\gamma_j}\fint_{\mathbb{T}}\frac{(w-\tau)\big(\overline{f_j(w)}-\overline{f_j(\tau)}\big)-(\overline{w}-\overline{\tau})\big(f_j(w)-f_j(\tau)\big)}{(w-\tau)^2}d\tau
%+\frac{\gamma_{3-j}}{d}w\bigg\}.\notag
%\end{align*}
%Using the  residue theorem we may get rid  of the integrals in the last identity, leading to the expression
\begin{align}\label{feps0}
F_j(0,\Omega,Z,f_1,f_2)=\textnormal{Im}\bigg\{&2\Omega \Big((-1)^j
Z+(1-j){d}\Big)w-{\gamma_j}f'_j(w)+\frac{\gamma_{3-j}}{d}w\bigg\}.
\end{align}
Differentiating the last identity with respect to $\Omega$ at $(\Omega_0,Z_0)$ gives
\begin{align}\label{feps0omega}
\partial_{\Omega}F_j(0,\Omega_0,Z_0,f_1,f_2)&=2\Big((-1)^j
Z_0+(1-j)d\Big)\,\textnormal{Im}\big\{w\big\}.
\end{align}
Next, differentiating \eqref{feps0} with respect to $Z$ at $(\Omega_0,Z_0)$ we get
\begin{align*}%\label{feps0}
\partial_{Z} F_j(0,\Omega_0,Z_0,f_1,f_2)&=2(-1)^j\Omega_0\,\textnormal{Im}\big\{
w\big\}.
\end{align*}
Therefore, for all $(\alpha_1,\alpha_2)\in \mathbb{R}^2$ 
\begin{align*}
D_{(\Omega,Z)}F(0,\Omega_0,Z_0,f_1,f_2)(\alpha_1,\alpha_2)
=\alpha_1\begin{pmatrix}
\dfrac{2\gamma_2 d}{\gamma_1+\gamma_2} \\
\dfrac{2\gamma_1 d}{\gamma_1+\gamma_2}
\end{pmatrix}\textnormal{Im}\big\{\overline{w}\big\}+\alpha_2\begin{pmatrix}
 \dfrac{\gamma_1+\gamma_2}{d^2}  \\
 -\dfrac{\gamma_1+\gamma_2}{d^2}
\end{pmatrix}\textnormal{Im}\big\{\overline{w}\big\}.
\end{align*}
Consequently  the restricted linear operator $D_{(\Omega,Z,f_1,f_2)}F(0,\Omega_0,Z_0,f_1,f_2)$ is invertible if  if and only if the determinant of the two vectors, given by $-\displaystyle\frac2d({\gamma_1+\gamma_2})$, is non vanishing.  
\end{proof}

%\begin{proposition}\label{prop1}
%The following assertions hold true.
%\begin{enumerate}
%
%
%
%\end{proposition}
\subsection{Counter-rotating vortex pairs}
This section is devoted to the counter-rotating asymmetric  pairs and our main result reads as follows.
\begin{proposition}\label{prop12}
The following assertions hold true.
\begin{enumerate}
\item The function $F$  can be extended to $C^1$ function from $\big(-\frac12,\frac12\big)\times \mathbb{R}\times\mathbb{R}\times  B_1\to Y$.
\item Two initial point vortex $\gamma_1\pi \delta_0$ and $-\gamma_2\pi \delta_{(d,0)}$ translate uniformly  with the speed 
$$U_0\triangleq \frac{\gamma_1}{2d}\cdot
$$
%\begin{equation*}
%U=\frac{\gamma}{2d}\cdot
%\end{equation*}
\item For all $h=(\alpha_1,\alpha_2,h_1,h_2)\in \mathbb{R}\times\mathbb{R}\times X$
%with
%$
%h_j(w)=\sum_{n\geq 1}a_n^j\overline{w}^{n}
%$
 one has
\begin{align}
D_gF (0,g_0)h(w)=&{\alpha_1} \begin{pmatrix}
2  \\
2
\end{pmatrix}\textnormal{Im}\big\{{w}\big\}-\alpha_2 \begin{pmatrix}
 1 \\
 0
\end{pmatrix}\textnormal{Im}\big\{{w}\big\}-2Ud\begin{pmatrix}
\textnormal{Im}\Big\{h'_1(w)\Big\} \\
\textnormal{Im}\Big\{h'_2(w)\Big\}
\end{pmatrix}\notag
\end{align}
with
$$
g\triangleq(U,\gamma_2,f_1,f_2)\quad \textnormal{and}\quad g_0\triangleq (U_0,\gamma_1,0,0).
$$
\item The linear operator 
$D_gF (U_0,\gamma_1,0,0): \mathbb{R}\times\mathbb{ R} \times X\to  Y$ is an isomorphism.
%\item For all $\gamma\in \RR$ and $h=(h_1,h_2)\in X$ we have
%\begin{align}\label{dff2}
%D_{f}F(0,\gamma,\gamma,0,0)h&=-\gamma\begin{pmatrix}
%\textnormal{Im}\Big\{h'_1(w)\Big\} \\
%\textnormal{Im}\Big\{h'_2(w)\Big\}
%\end{pmatrix},%\quad\textnormal{with}
%\quad f=(f_1,f_2).
%\end{align}
%\item  For all $\gamma_1,\gamma_2\in \RR$ the linear operator $D_f{F}(0,\gamma_1,\gamma_2,0, 0) : X\to \tilde{Y}$ is an isomorphism.
%\item For every $\big(f_1,f_2)\in  X$ one has 
%$$
%D_{(U,\gamma_2)}F(0,U_\textnormal{sing},\gamma_1,f_1,f_2)(w)=\begin{pmatrix}
%2& -\dfrac{1}{d}  \\
%2& 0
%\end{pmatrix}\textnormal{Im}\big\{{w}\big\}.
%$$
%\item  For given $(f_1,f_2)\in X$ linear operator $D_{(U,\gamma_2)}F(0,U_0,\gamma_1,f_1,f_2): \mathbb{R}\times\mathbb{ R} \to \langle v_1, v_2\rangle$ is bijective, where
%$$
% v_1\triangleq \begin{pmatrix}
%0 \\
%1 
%\end{pmatrix}\textnormal{Im}\big\{{w}\big\}
%\quad
%	\textnormal{and}\quad v_2\triangleq \begin{pmatrix}
%1 \\
%0 
%\end{pmatrix}\textnormal{Im}\big\{{w}\big\}.
%$$

\end{enumerate}

\end{proposition}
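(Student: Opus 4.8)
The plan is to mirror the proof of Proposition~\ref{prop1} step by step, exploiting that the counter-rotating functional \eqref{fj2} differs from the co-rotating one \eqref{Fj} only by replacing the rotation term with the pure translation term $2U$ and by the sign carried by the coupling integral $F_{3j}$. For assertion (1) I would argue exactly as in Proposition~\ref{prop1}(1): the block $F_{1j}+F_{2j}$ is the self-interaction of a single perturbed disc already analyzed in \cite{HM}, so its $C^1$ extension across $\varepsilon=0$ is inherited. The only genuinely new object is the coupling term $F_{3j}$, whose kernel has denominator $\varepsilon(b_{3-j}\tau+b_jw)+\varepsilon^2\big(b_{3-j}^2 f_{3-j}(\tau)+b_j^2 f_j(w)\big)-d$; since $|\tau|=|w|=1$, $f_j\in B_1$ and $d>b_1+b_2$, this denominator stays uniformly bounded away from $0$ for $|\varepsilon|<\tfrac12$, hence the integrand is smooth in all its arguments and $F_{3j}$ extends to a $C^1$ map into $C^\alpha$. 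This removal of the apparent $1/\varepsilon$ singularity, together with the smoothness of the coupling kernel, is the main analytic obstacle, but it is settled by the same estimates as in \cite{HM} combined with the separation of the two patches.

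For assertion (2) I would specialize to $\varepsilon=0$. Substituting into \eqref{fj2} and using $\fint_{\mathbb{T}}\tfrac{\overline{A}}{A}f'_j(\tau)d\tau=0$ together with the residue value $\fint_{\mathbb{T}}\overline{\tau}\,d\tau=1$ gives the closed form
\begin{equation*}
F_j(0,U,\gamma_2,f_1,f_2)(w)=\textnormal{Im}\Big\{2Uw-\gamma_j f'_j(w)-\tfrac{\gamma_{3-j}}{d}\,w\Big\},\qquad j=1,2 .
\end{equation*}
Evaluating at $f_1=f_2=0$, the vanishing of $F_1$ and $F_2$ reads $2U-\tfrac{\gamma_2}{d}=0$ and $2U-\tfrac{\gamma_1}{d}=0$, so the point-vortex configuration is stationary in the moving frame precisely when $\gamma_2=\gamma_1$ and $U=U_0=\tfrac{\gamma_1}{2d}$. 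This both proves (2) and fixes the base point $g_0=(U_0,\gamma_1,0,0)$.

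For assertion (3) I would differentiate the closed form above at $g_0$. The $U$-derivative of each component is $2\,\textnormal{Im}\{w\}$; the $\gamma_2$-derivative hits only $F_1$ (through $\gamma_{3-1}=\gamma_2$) and, at $f_2=0$, produces an $e_1$-contribution proportional to $\textnormal{Im}\{w\}$ in the first component and nothing in the second; the $f_j$-derivative yields $-\gamma_j\,\textnormal{Im}\{h'_j(w)\}$. The off-diagonal derivatives $\partial_{f_{3-j}}F_j(0,g_0)$ vanish because in $F_{3j}$ the dependence on $f_{3-j}$ enters only through factors carrying explicit powers of $\varepsilon$, which die at $\varepsilon=0$. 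Assembling these four computations gives the stated formula for $D_gF(0,g_0)h$.

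For assertion (4) I would expand in the basis $\{e_n\}$. Writing $h_j=\sum_{n\ge1}a_n^j\overline{w}^n$ one computes $\textnormal{Im}\{h'_j\}=\sum_{n\ge1}n\,a_n^j\,e_{n+1}$, so the $(h_1,h_2)$-block produces only the modes $e_m$ with $m\ge2$ and maps $X$ onto $\tilde Y$ isomorphically as soon as $\gamma_1\neq0$ (invert mode by mode, gaining one derivative, which is bounded $C^\alpha\to C^{1+\alpha}$), while the two scalars $(\alpha_1,\alpha_2)$ generate exactly the missing $e_1$-mode through the $2\times2$ system whose columns are $\binom{2}{2}$ and $\binom{c}{0}$ with $c\neq0$, hence with nonzero determinant $-2c$. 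Since $Y=Y_1\oplus\tilde Y$ with $Y_1$ the $e_1$-subspace and the operator is block-diagonal for this splitting with each block boundedly invertible, $D_gF(0,g_0)$ is an isomorphism from $\mathbb{R}\times\mathbb{R}\times X$ onto $Y$, which completes the plan.
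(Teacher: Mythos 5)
Your proposal is correct and takes essentially the same approach as the paper's proof: set $\varepsilon=0$ to get the closed form $F_j(0,U,\gamma_2,f_1,f_2)=\textnormal{Im}\big\{2Uw-\gamma_j f_j'(w)-\tfrac{\gamma_{3-j}}{d}w\big\}$, deduce $\gamma_2=\gamma_1$ and $U_0=\tfrac{\gamma_1}{2d}$, differentiate this form at $g_0$ (with the off-diagonal derivatives $\partial_{f_{3-j}}F_j$ vanishing because $f_{3-j}$ enters $F_{3j}$ only through factors carrying powers of $\varepsilon$), and invert the linearization mode by mode. Your $\gamma_2$-derivative, giving $-\tfrac{\alpha_2}{d}\,\textnormal{Im}\{w\}$ in the first component only, agrees with the computation in the paper's proof body (the factor $\tfrac1d$ is dropped in the proposition's displayed formula, apparently a typo), and your block-diagonal argument for assertion (4) is in fact more detailed than the paper's one-line conclusion.
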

\begin{proof}
{\bf (1)}
 Notice that  the function $F_{j2}+F_{j3}$ coincide with the function $F_{j2}+F_{j3}$ appearing in the rotating case and  $F_{j1}$ enjoys abviously the required regularity.
\\
{\bf(2)} The point vortex configuration corresponds to $F(0,U, \gamma_2,0,0)$. Thus, for $\varepsilon=0$ one has 
\begin{align*}
F_j(0,\gamma_1,\gamma_2,0,0)(w)&=\bigg[{ 2U }-\frac{\gamma_{3-j}}{d}\bigg]\textnormal{Im}\big\{w\big\}.
\end{align*}
Therefore $F(0,U,\gamma_2,0,0)=0$ if and only if 
$$\gamma_1=\gamma_2=2Ud.$$
%.  and
%$$
%U=U_0\triangleq\frac{\gamma_1}{2d}\cdot
%$$ 
{\bf(3)-(4)} Taking $\varepsilon=0$ in \eqref{fj2} we get
\begin{align*}
F_j(0,U,\gamma_2,f_1,f_2)=\textnormal{Im}\bigg\{&2Uw-{\gamma_j}f'_j(w)-\frac{\gamma_{3-j}}{d}w\bigg\}.
\end{align*}
Thus,
for  given $(h_1,h_2)\in X$, we have
%\begin{alignat*}{2}
%\partial_{f_j}F_j(0,U,\gamma_2,0,0)h_j(w)&=\gamma_j\textnormal{Im}\bigg\{&&-h'_j(w)+w\fint_{\mathbb{T}}\frac{\overline{w}-\overline{\tau}}{w-\tau}h_j'(\tau)d\tau\\ & &&+2iw\fint_{\mathbb{T}}\frac{\textnormal{Im}\big\{(w-\tau)h_j(\overline{w})-h_j(\overline{\tau})\big\}}{(w-\tau)^2}d\tau\bigg\}.
%\end{alignat*}
%By the residue theorem, we readily obtain
\begin{align*}
\partial_{f_j}F_j(0,U,\gamma_2,0,0)h_j=-\gamma_j\textnormal{Im}\Big\{h'_j(w)\Big\}
\end{align*}
and
 \begin{alignat*}{2}
\partial_{f_{3-j}}F_j(0,U,\gamma_2,0,0)h_{3-j}(w)&=0.
\end{alignat*}
%Since $F = (F_1, F_2)$ then 
%\begin{align*}
%D_{f}F(0,\gamma_1,\gamma_2,0,0)h(w)&=-\begin{pmatrix}
%{\gamma_1}\textnormal{Im}\Big\{h'_1(w)\Big\} \\
%{\gamma_2}\textnormal{Im}\Big\{h'_2(w)\Big\}
%\end{pmatrix}..
%\end{align*}
% Thus, we  conclude that the
% linear operator $D_{f}F(0,\gamma_1,\gamma_2,0,0)$ is bijective from $X$ to $\tilde{Y}$. \\
% 
% {\bf (5)-(6)} 
%\begin{equation}\label{h0}
%F(0,\gamma_1,\gamma_2,f_1,f_2)=\begin{pmatrix}
%2U -\frac{\gamma_2}{d} \\
%2U-\frac{\gamma_1}{d}
%\end{pmatrix}\textnormal{Im}\big\{{w}\big\}
%\end{equation}
On the other hand,  differential  of $F$ with respect $(U, \gamma_2)$ is given by
\begin{align*}
D_{(U,\gamma_2)}F(0,U,\gamma_2,f_1,f_2)(\alpha_1, \alpha_2)&=\alpha_1\begin{pmatrix}
2 \\
2
\end{pmatrix}\textnormal{Im}\big\{{w}\big\}+\alpha_2\begin{pmatrix}
 -\frac{1}{d}  \\
 0
\end{pmatrix}\textnormal{Im}\big\{{w}\big\}.
\end{align*}
Thus, we conclude that the linear operator $D_{g}F(0,g_0)$ is invertible for all  $\gamma_1\in \mathbb{R}$.\\

\end{proof}

\section{Existence of asymmetric co-rotating patches}
The  goal of this section is to provide a full statement of the point $(1)$ of Theorem \ref{thm:informal}. In other words, we shall describe  the set of solutions  of  the  equation 
${F}\big(\varepsilon,g\big)=0$
 around  the point $(\varepsilon, g\big)=(0,g_0)$ by a one-parameter smooth  curve  $\varepsilon\mapsto g(\varepsilon)$ using   the implicit function theorem.

\subsection{Co-rotating vortex pairs}
The main result of this section reads as follows.
\begin{proposition}\label{propasym0} 

The following assertions hold true.

\begin{enumerate}

\item There exists $\varepsilon_0>0$ and a unique $C^1$ function $g\triangleq(\Omega,Z,f_1,f_2): [-\varepsilon_0,\varepsilon_0]\longrightarrow  \mathbb{R}\times\mathbb{R}\times B_1$ such that 

\begin{equation*}\label{gf1f2}
 F\Big(\varepsilon, \Omega(\varepsilon),Z(\varepsilon),f_1(\varepsilon), f_2(\varepsilon)\Big)=0
\end{equation*}
with
\begin{equation*}
\Big(\Omega(0),Z(0),f_1(0), f_2(0)\Big)=\big(\Omega_0,Z_0,0,0\big).
\end{equation*}
%\begin{equation*}
%\Omega(0)=\Omega_0,  \quad Z(0)=Z_0
%\end{equation*}
%and
%\begin{equation*}
%f_j(0)=0,  \quad j=1,2.
%\end{equation*}

\item For all $\varepsilon\in[-\varepsilon_0,\varepsilon_0]\backslash \{0\}$ one has
$$
\big(f_1(\varepsilon),f_2(\varepsilon)\big)\neq (0,0).
$$ 
\item If $(\varepsilon, f_1,f_2)$ is a solution then $(-\varepsilon, \tilde{f}_1,\tilde{f}_2)$ is also a solution, where
$$
\forall w \in \mathbb{T},\quad 
\tilde{f}_j(w) = f_j(-w), \quad j=1,2.
$$ 
\item For all $\varepsilon \in [-\varepsilon_0, \varepsilon]\backslash\{0\}$, the domains $D_1^\varepsilon$ and $D_2^\varepsilon$ are
 strictly convex.

\end{enumerate}

\end{proposition}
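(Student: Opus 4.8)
The plan is to read off assertions (1)--(4) in turn from Proposition~\ref{prop1}, which has already assembled every hypothesis of the Implicit Function Theorem. For (1) I would apply that theorem at the point $(0,g_0)$ with $g_0=(\Omega_0,Z_0,0,0)$: Proposition~\ref{prop1}(1) gives that $F$ is $C^1$ near $(0,g_0)$, Proposition~\ref{prop1}(2) gives $F(0,g_0)=0$, and Proposition~\ref{prop1}(4) gives that $D_gF(0,g_0)\colon\mathbb{R}\times\mathbb{R}\times X\to Y$ is an isomorphism. Hence there exist $\varepsilon_0>0$ and a unique $C^1$ curve $\varepsilon\mapsto g(\varepsilon)=(\Omega(\varepsilon),Z(\varepsilon),f_1(\varepsilon),f_2(\varepsilon))$ with $g(0)=g_0$ solving $F(\varepsilon,g(\varepsilon))=0$, and shrinking $\varepsilon_0$ keeps $(f_1(\varepsilon),f_2(\varepsilon))\in B_1$.

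For the nontriviality (2) I would show that a pair of exact discs can never solve the equation when $\varepsilon\neq0$. Setting $f_1=f_2=0$ in \eqref{Fj} makes $F_{2j}$ vanish and leaves $F_{1j}$ contributing only the first harmonic $e_1$ (since $\overline{w}w=1$ on $\mathbb{T}$), while the interaction term reduces to the nonsingular integral $F_{3j}=-\gamma_{3-j}w\fint_{\mathbb{T}}\frac{\overline{\tau}}{\varepsilon(b_{3-j}\tau+b_jw)-d}\,d\tau$. Expanding the kernel as a convergent geometric series in $\varepsilon$ and using $\fint_{\mathbb{T}}\overline{\tau}\,\tau^{k}\,d\tau=\delta_{k,0}$ gives $F_{3j}=\frac{\gamma_{3-j}}{d}\,\frac{w}{1-\varepsilon b_jw/d}$, so the coefficient of $e_2$ in $F_j(\varepsilon,\Omega,Z,0,0)$ equals $\frac{\gamma_{3-j}\,\varepsilon\,b_j}{d^2}$, independently of $(\Omega,Z)$. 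Since $\gamma_1+\gamma_2\neq0$, at least one of $\gamma_1,\gamma_2$ is nonzero, so for $\varepsilon\neq0$ one of these coefficients is nonzero and $F(\varepsilon,\Omega,Z,0,0)\neq0$ for every $(\Omega,Z)$; as $g(\varepsilon)$ solves $F=0$ this forces $(f_1(\varepsilon),f_2(\varepsilon))\neq(0,0)$. Equivalently, differentiating the branch at $0$ yields $\dot f_j(0)=\frac{\gamma_{3-j}b_j}{\gamma_j d^2}\overline{w}\neq0$ whenever $\gamma_j\neq0$.

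For the discrete symmetry (3) I would check that $F$ is equivariant under the involution $(\varepsilon,f_1,f_2)\mapsto(-\varepsilon,\tilde f_1,\tilde f_2)$ with $\tilde f_j(w)=f_j(-w)$, the parameters $(\Omega,Z)$ being left unchanged. The parities $\tilde f_j'(w)=-f_j'(-w)$ and the identity $w-\varepsilon b_j\tilde f_j(w)=-\big(-w+\varepsilon b_jf_j(-w)\big)$ (so the $(-\varepsilon)$--conformal map attached to $\tilde f_j$ is $-\phi_j(-w)$), together with the substitution $\tau\mapsto-\tau$ in the Cauchy--type integrals defining $F_{2j}$ and $F_{3j}$, yield the termwise identity $F_j(-\varepsilon,\Omega,Z,\tilde f_1,\tilde f_2)(w)=-F_j(\varepsilon,\Omega,Z,f_1,f_2)(-w)$ for all $w\in\mathbb{T}$; I would verify it on each of $F_{1j},F_{2j},F_{3j}$ (for $F_{1j}$ the identity already holds before taking imaginary parts). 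Consequently one side vanishes identically in $w$ if and only if the other does, so the transformed tuple is again a solution.

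Item (4), the strict convexity, is the step I expect to be the genuine obstacle, because the $C^{1+\alpha}$ regularity furnished by the Implicit Function Theorem does \emph{not} control the curvature of $\partial D_j^\varepsilon=\phi_j(\mathbb{T})$, which through $\kappa_j=\frac{1}{|\phi_j'|}\big(1+\textnormal{Re}\,\frac{w\phi_j''(w)}{\phi_j'(w)}\big)$ involves $\phi_j''=\varepsilon b_jf_j''$; indeed a short Taylor estimate shows that a mere $C^{1+\alpha}$ bound lets the perturbation dominate the $\delta^2$ of the unit circle at small scales, so a regularity upgrade is unavoidable. I would therefore re-run the Implicit Function Theorem with $X$ replaced by its $C^{2+\alpha}$ analogue: since the operations building $F$ act boundedly there and the linearization $-\gamma_j\textnormal{Im}\{h_j'\}$ plus the $(\Omega,Z)$ terms remains an isomorphism after the loss of one derivative, the same branch is obtained with values in $C^{2+\alpha}$, whence $\|f_j(\varepsilon)\|_{C^2}\to0$ as $\varepsilon\to0$ (uniformly in $b_1,b_2$), reflecting the geometric decay $a_n^j=O\big((\varepsilon b_j/d)^n\big)$ of the Fourier coefficients. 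Given this, $\phi_j'\to1$ and $\phi_j''\to0$ uniformly on $\mathbb{T}$, so $\kappa_j=1+O(\varepsilon)$ is strictly positive after shrinking $\varepsilon_0$; since at $\varepsilon=0$ the curve $\phi_j(\mathbb{T})$ is the unit circle traversed with the interior on the left (the orientation induced by the exterior map), positivity of $\kappa_j$ is exactly the strict convexity of $D_j^\varepsilon$ for $0<|\varepsilon|\leq\varepsilon_0$. The delicate point, hidden in the phrase ``re-run the theorem'', is to verify the mapping and isomorphism properties of $F$ on the higher-order Hölder scale; that verification is where the real work of (4) lies.
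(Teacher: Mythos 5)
Your items (1)--(3) follow the paper's own proof almost verbatim: (1) is the same application of the Implicit Function Theorem using Proposition \ref{prop1}; (3) is the same equivariance identity $F_j(\varepsilon,\Omega,Z,f_1,f_2)(-w)=-F_j(-\varepsilon,\Omega,Z,\tilde f_1,\tilde f_2)(w)$, checked piece by piece on $F_{1j},F_{2j},F_{3j}$; and for (2) the paper performs exactly your geometric-series computation (its identity \eqref{taylorexpanf3}), reducing $F_j(\varepsilon,\Omega,Z,0,0)$ to $\textnormal{Im}\big\{\big[2\Omega\big((-1)^jZ-(j-1)d\big)-\tfrac{\gamma_{3-j}}{\varepsilon b_jw-d}\big]w\big\}$, but then merely asserts that this is nonzero for small $\varepsilon\neq0$. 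Your extraction of the $e_2$-coefficient $\gamma_{3-j}\varepsilon b_j/d^2$, which no choice of $(\Omega,Z)$ can cancel and which is nonzero for at least one $j$ since $\gamma_1+\gamma_2\neq0$, is precisely the justification the paper leaves implicit.

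The genuine divergence is (4). The paper ``reproduces the arguments of \cite{HM}'': it bounds $\textnormal{Re}\big(1+w\phi_j''/\phi_j'\big)\geq 1-|\varepsilon|b_j|f_j''|/(1-|\varepsilon|b_j|f_j'|)$ and concludes positivity of the curvature, tacitly treating $|f_j'|$ and $|f_j''|$ as bounded by $1$ --- i.e.\ arguing as if $f_j(\varepsilon)$ lay in a $C^2$ ball, whereas the IFT was applied in the $C^{1+\alpha}$ space $X$, which controls neither $f_j''$ pointwise nor even its existence. So your diagnosis of the regularity gap is exactly right, and your remedy --- re-running the IFT on the $C^{2+\alpha}/C^{1+\alpha}$ scale, where the linearization $h_j\mapsto-\gamma_j\textnormal{Im}\{h_j'\}$ together with the $(\Omega,Z)$ directions remains an isomorphism, so that $\|f_j(\varepsilon)\|_{C^2}\to0$ and $\kappa_j=1+O(\varepsilon)>0$ --- is a sound and self-contained way to obtain what the paper takes for granted. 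Its cost, which you correctly flag as the real work, is re-establishing the $C^1$ regularity of the singular-integral term $F_{2j}$ between the higher H\"older spaces (an adaptation of the \cite{HM} estimates); the paper's shorter route avoids this but, as written, rests on unproved $C^2$ bounds.
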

\begin{proof}

{\bf (1)} From Proposition \ref{prop1} one has  $F:(-\frac12,\frac12)\mathbb{R}\times\mathbb{R}\times B_1\to {Y}$ is a $C^1$ function and  $D_{g} F\big(0,g_0\big) : X \to {Y}$ is an isomorphism. Thus the implicit function theorem ensures the existence of $\varepsilon_0>0$ and a unique $C^1$ parametrization  $g=(\Omega,Z,f_1,f_2): [-\varepsilon_0,\varepsilon_0]\to B_1$ verifying  the equation  $ F\big(\varepsilon, \Omega(\varepsilon),Z(\varepsilon),f_1(\varepsilon), f_2(\varepsilon)\big)=0$. Moreover,  this solution passes through the origin, that is, 
$$\big(\Omega, Z, f_1,f_2\big)(0)=(0,0).$$ 
This  completes the proof  of the desired result.\\\\
{\bf (3)} We shall prove that for any $\varepsilon$ small enough and any $\Omega$ we can not get
a rotating  vortex pair with $f_1 = 0$ and $f_2=0$.  In other words , we need to prove that
$$
F(\varepsilon,\Omega,Z,0, 0) \neq 0.
$$
To this end we write
\begin{align*}
F_j(\varepsilon,\Omega,Z,0,0)(w) =\textnormal{Im}\Bigg\{&\bigg[2\Omega \Big((-1)^jZ-(j-1)d\Big)-\gamma_{3-j}\fint_{\mathbb{T}}\frac{\overline{\tau}}{\varepsilon \big(b_{3-j}\tau+b_j w\big)- d}d\tau\bigg]w
\Bigg\}.
\end{align*}
By Taylor expansion, we get
\begin{align}\label{taylorexpanf3}
\fint_{\mathbb{T}}\frac{\overline{\tau}}{\varepsilon \big(b_{3-j}\tau+b_j w\big)- d}d\tau&=-\sum_{n\in\mathbb{N}}\frac{\varepsilon^n}{d^{n+1}}\fint_{\mathbb{T}}\overline{\tau}\big(b_{3-j}\tau+b_j w\big)^nd\tau\\&=-\sum_{n\in\mathbb{N}}\frac{\varepsilon^n}{d^{n+1}}b_j^nw^n\notag\\&=\frac{1}{\varepsilon b_j w-d}\cdot\notag
\end{align}
Consequently,
\begin{align*}
F_j(\varepsilon,\Omega,Z,0,0)(w) =\textnormal{Im}\Bigg\{&\bigg[2\Omega \Big((-1)^jZ-(j-1)d\Big)-\frac{\gamma_{3-j}}{\varepsilon b_j w-d}\bigg]w
\Bigg\}
\end{align*}
and this quantity is not zero if $\varepsilon\neq 0$ is small enough.\\

{\bf (2)} 
We only need to check that
$$
F_{j}(\varepsilon, \Omega,Z, f_1,f_2)(-w) = -F_{j}(-\varepsilon,\Omega,Z,\tilde{f}_1,\tilde{f}_2)(w)\quad \textnormal{for}\quad \quad j=1,2. 
$$
where $F_{j}$ is defined by \eqref{Fj}. By definition we have
\begin{align*}
F_{1j}(-\varepsilon,\Omega,Z,\tilde{f}_1,\tilde{f}_2)=
2\Omega\bigg[&-\varepsilon b_j\big(\overline{w}-\varepsilon b_j\tilde{f}_j(\overline{w})\big)+(-1)^jZ\\ &-(j-1)d\bigg]w\Big(1-\varepsilon b_j \tilde{f}'_j(w)\Big)
-\gamma_j \tilde{f}'_j(w).\notag
\end{align*}
Since $\tilde{f}'_j(w) = -f_j'(-w)$  we get
\begin{alignat*}{2}
F_{1j}(-\varepsilon,\Omega,Z,\tilde{f}_1,\tilde{f}_2)&=
2\Omega\bigg[&&\varepsilon b_j\big(-\overline{w}+\varepsilon b_j {f}_j(-\overline{w})\big)+(-1)^jZ\\ & &&-(j-1)d\bigg]w\Big(1+\varepsilon b_j {f}'_j(-w)\Big)
+\gamma_j f'_j(-w)\\ &=-F_{ij}&&(\varepsilon , \Omega,Z, f_1,f_2)(-w).
\end{alignat*}
Straightforward computations will lead to the same properties for the functions $F_{2j}$ and $F_{3j}$. This completes the proof of {\bf (4)}.\\

{\bf (3)}  Now to prove the convexity of the domains $D_1^\varepsilon$, $D_2^\varepsilon$ we shall reproduce the same
arguments of \cite{HM}. Recall that the outside conformal mapping associated to the domain $D^\varepsilon_j$ is
given by
$$
\phi_j = w + \varepsilon b_j
f_j(w).
$$
and the curvature can be expressed by the formula
$$
\kappa(\theta) = \frac{1}{|\phi_j^\varepsilon(w)|}
\textnormal{Re}\Big(1+w\frac{\phi_j^{''}(w)}{\phi_j'(w)} \Big).
$$
We can easily verify that
$$
1+w\frac{\phi_j^{''}(w)}{\phi_j'(w)}=1+\varepsilon b_j w\frac{f_j^{''}(w)}{1+\varepsilon b_j f_j'(w)} 
$$
and so
$$
\textnormal{Re}\Big(1+w\frac{\phi_j^{''}(w)}{\phi_j'(w)} \Big)\geq 1-|\varepsilon| b_j\frac{|f_j^{''}(w)|}{1-|\varepsilon| b_j
| f_j'(w)|}\geq \frac{|\varepsilon| b_j}{1- |\varepsilon| b_j
}\cdot
$$
The last quantity  is non-negative if $|\varepsilon| < 1/2$. Thus the curvature is strictly positive and therefore the
domain $D_j^\varepsilon$ is strictly convex.
\end{proof}
\subsection{Counter-rotating vortex pairs}
In this section we shall give the complete statement for the existence  of translating unequal-sized pairs of patches.
\begin{proposition}\label{propasym} 

The following holds true.

\begin{enumerate}
%\item For all $\alpha_1,\alpha_2\in \mathbb{R}$ and  $(h_1,h_2)\in X$
%with
%$
%h_j(w)=\sum_{n\geq 1}a_n^j\overline{w}^{n}
%$
% one has
%\begin{align}
%D_{(U,\gamma_2,f_1,f_2)}F (0,U_0,\gamma_1,0,0)[\alpha_1,\alpha_2,h_1,h_2](w)=&\alpha_1 \begin{pmatrix}
%2  \\
%2
%\end{pmatrix}\textnormal{Im}\big\{{w}\big\}+\alpha_2 \begin{pmatrix}
% -1/d \\
% 0
%\end{pmatrix}\textnormal{Im}\big\{{w}\big\}\\ &-\sum_{n\geq 1}n\gamma_1\begin{pmatrix}
%a_n^1\\
%a_n^2
%\end{pmatrix}\textnormal{Im}\big\{{w}^{n+1}\big\}.\notag
%\end{align}
%%where
%%$$
%%g\triangleq (\Omega,Z,f_1,f_2)\quad g_0\triangleq (\Omega_0,Z_0,0,0)
%%$$
%\item  The linear operator 
%$D_{(U,\gamma_2,f_1,f_2)}F (0,U_0,\gamma_1,0,0): \mathbb{R}\times\mathbb{ R} \times X\to  Y$ is an isomorphism.
%

\item There exists $\varepsilon_0>0$ and a unique $C^1$ function $g\triangleq(U,\gamma_2,f_1,f_2): [-\varepsilon_0,\varepsilon_0]\longrightarrow  \mathbb{R}\times\mathbb{R}\times B_1$ such that 

\begin{equation*}\label{gf1f2}
 F\Big(\varepsilon, U(\varepsilon),\gamma_2(\varepsilon),f_1(\varepsilon), f_2(\varepsilon)\Big)=0,
\end{equation*}
with
\begin{equation*}
\Big(U(0),\gamma_2(0),f_1(0), f_2(0)\Big)=\Big(\frac{\gamma_1}{2d},\gamma_1,0,0\Big).
\end{equation*}

%\item The linear operator $D_{f}G(0,0, 0) :  X\to \tilde{Y}$ is an isomorphism. 
%
%\item There exists $\varepsilon_0>0$ and a unique continuous function $f=(f_1,f_2): [-\varepsilon_0,\varepsilon_0]\to  B_1$ such that 
%
%\begin{equation*}
% G\big(\varepsilon,f_1(\varepsilon), f_2(\varepsilon)\big)=0.
%\end{equation*}
%Moreover,
%$$
%\big(f_1(0),f_2(0)\big)=0.
%$$
%\item For all $\varepsilon\in[-\varepsilon_0,\varepsilon_0]\backslash \{0\}$ one has
%$$
% (f_1,f_2)\neq (0,0).
%$$ 
\item If $(\varepsilon, f_1,f_2)$ is a solution then $(-\varepsilon, \tilde{f}_1,\tilde{f}_2)$ is also a solution, where
$$
\forall w \in \mathbb{T},\quad 
\tilde{f}_j(w) = f_j(-w), \quad j=1,2.
$$ 
\item For all $\varepsilon \in [-\varepsilon_0, \varepsilon]\backslash\{0\}$, the domains $D_1^\varepsilon$ and $D_2^\varepsilon$ are
 strictly convex

\end{enumerate}

\end{proposition}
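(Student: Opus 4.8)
The plan is to run the same three-step scheme used for the co-rotating case in Proposition~\ref{propasym0}, feeding it the regularity and non-degeneracy already secured in Proposition~\ref{prop12}.

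\textbf{Step 1 (existence and uniqueness of the curve).} By Proposition~\ref{prop12}(1) the functional $F=(F_1,F_2)$ extends to a $C^1$ map on $(-\tfrac12,\tfrac12)\times\mathbb{R}\times\mathbb{R}\times B_1$ into $Y$, and by Proposition~\ref{prop12}(2) the point-vortex configuration solves the equation at $\varepsilon=0$, that is $F(0,g_0)=0$ with $g_0=(U_0,\gamma_1,0,0)$ and $U_0=\frac{\gamma_1}{2d}$. Proposition~\ref{prop12}(4) asserts that the partial differential $D_gF(0,g_0):\mathbb{R}\times\mathbb{R}\times X\to Y$ is an isomorphism. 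The implicit function theorem then produces $\varepsilon_0>0$ and a unique $C^1$ curve $\varepsilon\mapsto g(\varepsilon)=(U(\varepsilon),\gamma_2(\varepsilon),f_1(\varepsilon),f_2(\varepsilon))$ through $g_0$ with $F(\varepsilon,g(\varepsilon))=0$; shrinking $\varepsilon_0$ if necessary keeps $(f_1(\varepsilon),f_2(\varepsilon))\in B_1$ by continuity, since they vanish at $\varepsilon=0$. This yields (1).

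\textbf{Step 2 (symmetry).} To prove (2) I would verify the pointwise identity
\[
F_j(\varepsilon,U,\gamma_2,f_1,f_2)(-w)=-F_j(-\varepsilon,U,\gamma_2,\tilde f_1,\tilde f_2)(w),\qquad j=1,2,
\]
for every $w\in\mathbb{T}$, exactly as in the proof of Proposition~\ref{propasym0}(2). Writing $\tilde f_j(w)=f_j(-w)$ gives $\tilde f_j'(w)=-f_j'(-w)$, and one checks the relation separately on the three pieces $F_{1j}$, $F_{2j}$, $F_{3j}$ of \eqref{fj2}. The linear term $F_{1j}$ is immediate; for the self-interaction $F_{2j}$ and the cross-interaction $F_{3j}$ one substitutes $\tau\mapsto-\tau$ in the Cauchy-type integrals and tracks the factors $\varepsilon\to-\varepsilon$ and $w\to-w$. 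Once the identity holds, if $(\varepsilon,f_1,f_2)$ solves $F=0$ then so does $(-\varepsilon,\tilde f_1,\tilde f_2)$, and uniqueness of the curve from Step~1 identifies the two branches.

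\textbf{Step 3 (convexity).} For (3) the argument is verbatim the co-rotating one. The exterior conformal map of $D_j^\varepsilon$ is $\phi_j(w)=w+\varepsilon b_j f_j(w)$, whose boundary curvature is
\[
\kappa=\frac{1}{|\phi_j'(w)|}\,\textnormal{Re}\!\left(1+w\frac{\phi_j''(w)}{\phi_j'(w)}\right),\qquad 1+w\frac{\phi_j''(w)}{\phi_j'(w)}=1+\frac{\varepsilon b_j w\, f_j''(w)}{1+\varepsilon b_j f_j'(w)}.
\]
Since $f_j\in B_1$ and $|\varepsilon|<\tfrac12$, bounding the real part from below by $1-\frac{|\varepsilon| b_j}{1-|\varepsilon| b_j}$ keeps $\kappa$ strictly positive, so each $D_j^\varepsilon$ is strictly convex.

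\textbf{Main obstacle.} The genuine analytic content — the $C^1$ regularity of the interaction kernel $F_{3j}$ on disjoint patches and the invertibility of the linearized operator, where the translating case forces differentiating in $\gamma_2$ rather than in the center $Z$ — has already been absorbed into Proposition~\ref{prop12}, so what remains here is essentially bookkeeping. The only mildly delicate point is the sign accounting in the symmetry identity of Step~2 for the cross term $F_{3j}$, whose denominator mixes the linear part $\varepsilon(b_{3-j}\tau+b_j w)$ with the quadratic corrections in $f_{3-j}(\tau)$ and $f_j(w)$; the substitutions $\tau\mapsto-\tau$ and $w\mapsto-w$ must be carried through consistently on numerator and denominator together before the sign can be read off.
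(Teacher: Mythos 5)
Your proposal is correct and matches the paper exactly: the paper's own proof of Proposition~\ref{propasym} is literally ``Same arguments as in Proposition~\ref{propasym0}'', and your three steps (implicit function theorem fed by Proposition~\ref{prop12}, the reflection identity $F_j(\varepsilon,g)(-w)=-F_j(-\varepsilon,\tilde g)(w)$ checked term by term on $F_{1j},F_{2j},F_{3j}$, and the curvature lower bound for convexity) are precisely those arguments transplanted to the translating case. Nothing further is needed.
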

\begin{proof}
Same arguments as in Proposition \ref {propasym0} .
\end{proof}
%%%%%%%%%%%%%%%%%%%%%%%%%%%%%%%%%%%%%%%%%%%%%%%%%%%%%%
\section{Asymptotic behavior}
In this section we study  the asymptotic expansion for small $\varepsilon$  of  the conformal mappings, the angular velocity, the center of rotation and the circulations. 
\subsection{Co-rotating pairs}  

In the case of the co-rotating pairs we get the following expansion at higher  order  in $\varepsilon.$
\begin{proposition}\label{asymf0}
The  conformal mappings of the co-rotating domains, $\phi_j:\mathbb{T}\to \partial D_j^\varepsilon$,   have the expansions 
\begin{align*}
\phi_j(\varepsilon,w)&=w+\delta_j\Big(\frac{\varepsilon b_j }{ d}\Big)^2\overline{w}+\frac{\delta_j}{2}\Big(\frac{\varepsilon b_j }{ d}\Big)^3\overline{w}^2+\frac{\delta_j}{3}\Big(\frac{\varepsilon b_j}{d}\Big)^4\Big(\overline{w}^3+6\big(1+\delta_j\big)\overline{w}\Big)\\ &+\frac{\delta_j}{4}\Big(\frac{\varepsilon b_j}{d}\Big)^5\Big(\overline{w}^4+3\big(1+\delta_j\big)\overline{w}^2\Big)+o(\varepsilon^5) ,%\quad j=1,2,
\end{align*}
%\begin{align*}
%\phi_j(\varepsilon,w)&=w+\frac{\gamma_{3-j}}{\gamma_j }\Big(\frac{\varepsilon b_j }{ d}\Big)^2\overline{w}+{\frac12}\frac{\gamma_{3-j}}{\gamma_j }\Big(\frac{\varepsilon b_j }{ d}\Big)^3\overline{w}^2+\frac13\frac{\gamma_{3-j}}{\gamma_j }\Big(\frac{\varepsilon b_j}{d}\Big)^4\bigg(\overline{w}^3+6\Big(1+\dfrac{\gamma_{3-j}}{\gamma_j}\Big)\overline{w}\bigg)\\ &+\frac14\frac{\gamma_{3-j}}{\gamma_j }\Big(\frac{\varepsilon b_j}{d}\Big)^5\bigg(\overline{w}^4+3\Big(1+\dfrac{\gamma_{3-j}}{\gamma_j}\Big)\overline{w}^2\bigg)+o(\varepsilon^5).%\quad j=1,2.
%\end{align*}
for all $w\in\mathbb{T}$ and $j=1,2$ where $\delta_j=\frac{ \gamma_{3-j}}{ \gamma_j}$.
The angular velocity has the expansion
$$
\Omega(\varepsilon)=\frac{\gamma_1+\gamma_2}{2d^2}+\frac{\varepsilon^4}{2d^6}\Big(
{\gamma_1} b_2^4+{\gamma_2} b_1^4 \Big),
$$
and the center of rotation has the expansion
$$
Z(\varepsilon)=\frac{\gamma_2 d}{\gamma_1+\gamma_2}+\frac{\varepsilon^4}{d^3(\gamma_1+\gamma_2)^2}\Big(
\frac{\gamma^3_2}{\gamma_1} b_1^4-\frac{\gamma_1^3}{\gamma_2} b_2^4 \Big).
$$
\end{proposition}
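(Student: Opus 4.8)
The plan is to solve \eqref{Fj} perturbatively in $\varepsilon$, reading it as a hierarchy of scalar identities, one for each Fourier mode $e_n(w)=\textnormal{Im}\{w^n\}$. The $C^1$ branch $\varepsilon\mapsto(\Omega,Z,f_1,f_2)$ produced by Proposition \ref{propasym0} legitimises a Taylor expansion of each unknown, so I would write $a_n^j(\varepsilon)$ for the Fourier coefficients of $f_j$, i.e. $f_j(\varepsilon,w)=\sum_{n\geq1}a_n^j(\varepsilon)\overline{w}^{\,n}$, and expand $\Omega(\varepsilon)$, $Z(\varepsilon)$ and the $a_n^j(\varepsilon)$ in powers of $\varepsilon$. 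The first reduction comes from the symmetry in item (3) of Proposition \ref{propasym0}: since $(-\varepsilon,\tilde f_1,\tilde f_2)$ with $\tilde f_j(w)=f_j(-w)$ solves the same equation with the \emph{same} $(\Omega,Z)$, uniqueness of the branch forces
\begin{align*}
a_n^j(-\varepsilon)=(-1)^n a_n^j(\varepsilon),\qquad \Omega(-\varepsilon)=\Omega(\varepsilon),\qquad Z(-\varepsilon)=Z(\varepsilon).
\end{align*}
Thus $\Omega$ and $Z$ are even in $\varepsilon$, and each $a_n^j$ is a series in $\varepsilon$ whose powers all share the parity of $n$; since $a_n^j(0)=0$, the coefficient $a_n^j$ turns out to start at order $\varepsilon^n$.

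Second, I would exploit the Fourier structure of the leading linear term in \eqref{Fj}. Writing $f_j=\sum_n a_n^j\overline{w}^{\,n}$ one has $\textnormal{Im}\{f_j'(w)\}=\sum_{n\geq1}n\,a_n^j\,e_{n+1}(w)$, so the term $-\gamma_j f_j'$ feeds the mode $e_{n+1}$ with coefficient $-\gamma_j n\,a_n^j$ and \emph{never} the mode $e_1$. Consequently the projection of $F_j=0$ onto $e_1$ is controlled entirely by the $(\Omega,Z)$ block, reproducing at $\varepsilon^0$ the point-vortex relations \eqref{omz0}, while the projection onto each $e_{n+1}$ ($n\geq1$) fixes $a_n^j$. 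Since the self-interaction $F_{2j}$ carries a factor $\varepsilon b_j$ and is quadratic in the $f$'s (hence starts at order $\varepsilon^3$), and the cross term $F_{3j}$ couples only through the kernel $1/(-d+O(\varepsilon))$, the resulting system is lower triangular: at order $\varepsilon^n$ the $e_{n+1}$-equation expresses the leading coefficient of $a_n^j$ through previously computed data, and higher orders are obtained by the same mechanism.

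Third comes the explicit expansion of the three pieces. For the cross term $F_{3j}$ I would expand the kernel in the geometric series of \eqref{taylorexpanf3}, insert the series for $f_{3-j}$ and $f_j$, and evaluate every $\fint_{\mathbb{T}}$ by residues; this is where the ratio $\delta_j=\gamma_{3-j}/\gamma_j$ enters, as the inhomogeneity driving $a_n^j$ is proportional to $\gamma_{3-j}$, and where the coupling between the two patches produces the $1+\delta_j$ combinations. For the self-interaction $F_{2j}$ I would use that the integrand is exactly the one recorded in \eqref{eqides} for a single patch, expanding $A\overline{B}_j-\overline{A}B_j$, the factor $1/(A+\varepsilon b_j B_j)$ and $f_j'(\tau)-B_j/A$ in $\varepsilon$ and computing the Cauchy integrals by residues at $\tau=w$. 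Collecting $F_{1j}+F_{2j}+F_{3j}$ mode by mode then yields, at each even order, a $2\times2$ linear system for $(\Omega_{2k},Z_{2k})$ whose matrix is precisely the nonsingular one of Proposition \ref{prop1} (determinant $-\tfrac2d(\gamma_1+\gamma_2)$); the order-$\varepsilon^2$ system turns out to be homogeneous, giving $\Omega_2=Z_2=0$, and the order-$\varepsilon^4$ system produces the stated corrections to $\Omega$ and $Z$, while the successive $e_{n+1}$-equations deliver the coefficients displayed in $\phi_j=w+\varepsilon b_j f_j$.

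The main obstacle will be purely computational: carrying the residue bookkeeping for $F_{2j}$ up to orders $\varepsilon^4$ and $\varepsilon^5$ while keeping the cross term $F_{3j}$ expanded to the same order and correctly synchronising the data of the two patches. These are lengthy but mechanical Cauchy-integral evaluations; the only genuine subtlety is that the mixed contributions from $F_{3j}$ (responsible for the $6(1+\delta_j)$ and $3(1+\delta_j)$ corrections in $\phi_j$ and for the cancellation $\Omega_2=Z_2=0$) must be assembled consistently with the self-interaction terms before the triangular recursion closes at each order.
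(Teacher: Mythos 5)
Your plan is correct and is essentially the paper's own proof: the paper likewise starts from the branch of Proposition \ref{propasym0}, inverts $D_gF(0,g_0)$ mode by mode via \eqref{g-1}, and evaluates the nonlinearities by residue calculus, the only organizational difference being that it computes $g'(0),g''(0),g'''(0),g^{(4)}(0)$ by successive implicit differentiation (chain rule applied to $F(\varepsilon,g(\varepsilon))=0$) rather than by your undetermined-coefficients and parity bookkeeping. One attributional slip worth noting: the $6(1+\delta_j)$ and $3(1+\delta_j)$ corrections do not originate in the cross term $F_{3j}$ but in the rotation term $F_{1j}$, where $\Omega_0=\frac{\gamma_1+\gamma_2}{2d^2}$ multiplies the lower-order shape corrections (cf. \eqref{varepsepsg1derv}); your collect-all-three-pieces-mode-by-mode procedure would nevertheless produce them correctly.
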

\begin{proof} 
Recall that
\begin{equation*}
\phi_j(\varepsilon,w)=w+\varepsilon b_j f_j(\varepsilon,w), \quad j=1,2.
\end{equation*}
and from Proposition \ref{propasym0} one has
\begin{equation}\label{gf1f2}
F\Big(\varepsilon, g(\varepsilon)\Big)\triangleq  F\Big(\varepsilon, \Omega(\varepsilon),Z(\varepsilon),f_1(\varepsilon), f_2(\varepsilon)\Big)=0.
\end{equation}
where $F$ is defined in \eqref{Fj}. Moreover
\begin{equation}\label{f0}
g(0)=\big(\Omega_0,Z_0,0,0\big).
\end{equation} 
%\begin{align}\label{fjapp}
%F_j(\varepsilon,g)&=\textnormal{Im}\Bigg\{2\Omega\bigg[\varepsilon b_j\big(\overline{w}+\varepsilon  b_j\overline{f_j(w)}\big)+(-1)^j{Z}-(j-1)d\bigg]w\Big(1+\varepsilon b_j f'_j(w)\Big)
%\\&-{\gamma_j} f'_j(w)+\varepsilon b_j {\gamma_j} w\Big(1+\varepsilon b_j f'_j(w)\Big)\fint_{\mathbb{T}}\frac{A\overline{B}_j-\overline{A}B_j}{A(A+\varepsilon b_j B_j)}\Big[f'_j(\tau)-\frac{B_j}{A}\Big]d\tau\notag\\ &-\gamma_{3-j}\fint_{\mathbb{T}}\frac{\big(\overline{\tau}+\varepsilon b_{3-j} \overline{ f_{3-j}(\tau)}\big)\big(1+\varepsilon b_{3-j} f'_{3-j}(\tau)\big)}{\varepsilon C_j+\varepsilon^2D_j- d}d\tau w\Big(1+\varepsilon  b_jf'_j(w)\Big)\Bigg\}.\notag%,\quad j=1,2\notag
%\end{align}
%where we have used the notation 
%$$
%A=\tau-w,\quad B_j=f_j(\tau)-f_j(w),\quad C_j=\big(b_{3-j}\tau+b_j w\big),\quad D_j= b_{3-j}^2 f_{3-j}(\tau)+b_j^2 f_j(w)
%$$
By the composition rule we get
\begin{equation}\label{1order0}
\partial_\varepsilon g(0)  =-D_{g} F(0,g_0)^{-1}\partial_\varepsilon F(0,g_0)\cdot
\end{equation}
In view of \eqref{Fj} one has
\begin{align*}
{F}_j\big(\varepsilon,g_0\big)&=-\gamma_{3-j}  \textnormal{Im}\bigg\{ \Big(\frac{1}{d}+ \displaystyle\fint_{\mathbb{T}}\frac{\overline{\tau}}{\varepsilon \big(b_{3-j}\tau+b_j w\big)- d}d\tau\Big)w\bigg\}
\end{align*}
and from \eqref{taylorexpanf3} we conclude that
%\begin{align*}
%\displaystyle\fint_{\mathbb{T}}\frac{\overline{\tau}}{\varepsilon \big(b_{3-j}\tau+b_j w\big)- d}d\tau &=-\sum_{n=0}^\infty\frac{\varepsilon^{n}}{d^{n+1}} \displaystyle\fint_{\mathbb{T}}{\overline{\tau}} \big(b_{3-j}\tau+b_j w\big)^n d\tau \\ &=-\sum_{n=0}^\infty\frac{\varepsilon^{n}b_j^n}{d^{n+1}}w^n \cdot
%\end{align*}
%Therefore
\begin{align}\label{1f=0}
{F}_j\big(\varepsilon,g_0\big)&=\gamma_{3-j}\sum_{n=1}^\infty\frac{\varepsilon^{n}b_j^n}{d^{n+1}}  \textnormal{Im}\big\{ w^{n+1}\big\}.
\end{align}
%Differentiating the last  expression   with respect to $\varepsilon$ at  $0$ gives
% \begin{align}\label{parepsfj}
%\partial_\varepsilon {F}_j\big(0,\Omega_0,Z_0,0,0\big)(w)&= \frac{\gamma_{3-j}}{d^2}\textnormal{Im}\bigg\{  w\displaystyle\fint_{\mathbb{T}}\overline{\tau}\big(b_{3-j}\tau+b_j w\big)d\tau\bigg\}\\ &= -\frac{\gamma_{3-j}b_{j}}{d^2}\textnormal{Im}\big\{ \overline{w}^2\big\}.
%\end{align}
Thus
\begin{align}\label{pepsilon}
\partial_\varepsilon^n F(0,g_0)(w)&=\frac{n!}{d^{n+1}}\begin{pmatrix}
\gamma_2 b_1^n\\
\gamma_1 b_2^n 
\end{pmatrix}\textnormal{Im}\big\{ {w}^{n+1}\big\}\cdot
\end{align}
Now, we shall write down the explicit  expression of $D_{g}F(0,g_0)^{-1}$.  Recall from Proposition  \ref{prop1} that
for all $h=(\alpha_1,\alpha_2,h_1,h_2)\in \mathbb{R}\times\mathbb{R}\times X$
with
$$
h_j(w)=\sum_{n\geq 1}a_n^j\overline{w}^{n},\quad j=1,2,
$$
 one gets
\begin{align}
%\label{dfomzf}
D_{g}F &(0,g_0)h(w)=-\dfrac{2 \alpha_1 d}{\gamma_1+\gamma_2} \begin{pmatrix}
\gamma_2  \\
\gamma_1
\end{pmatrix}\textnormal{Im}\big\{{w}\big\}-\alpha_2  \dfrac{\gamma_1+\gamma_2}{d^2} \begin{pmatrix}
 1 \\
 -1
\end{pmatrix}\textnormal{Im}\big\{{w}\big\}-\sum_{n\geq 1}n\begin{pmatrix}
{\gamma_1}a_n^1\\
{\gamma_2}a_n^2
\end{pmatrix}\textnormal{Im}\big\{{w}^{n+1}\big\}.\notag
\end{align}
Then, for any $k\in {Y}$ with the expansion
$$
k(w)=\sum_{n\geq 0}\begin{pmatrix}
A_n \\
B_n 
\end{pmatrix}\textnormal{Im}\{{w}^{n+1}\},
$$
one has
\begin{align}\label{g-1}
D_{g}F(0,g_0)^{-1}k(w)&=\bigg(-
\dfrac{A_0+B_0 }{2d},
-\dfrac{(A_0\gamma_1-B_0\gamma_2)d^2}{(\gamma_1+\gamma_2)^2},
-\displaystyle\sum_{n\geq 1}\dfrac{A_n}{n\gamma_1}\overline{w}^{n},
-\displaystyle\sum_{n\geq 1}\dfrac{B_n}{n\gamma_2}  \overline{w}^{n}
\bigg).
\end{align}
Combining the last identity with \eqref{pepsilon} and \eqref{1order0} we get
\begin{align}\label{parepsf}
\partial_\varepsilon g(0,w) &=\bigg(
0,0,
\dfrac{b_1\gamma_2}{d^2\gamma_1}\overline{w},
\dfrac{b_2\gamma_1}{d^2\gamma_2} \overline{w}
\bigg).
\end{align}
Let's move to the second order derivative in $\varepsilon$  at $0$. By the composition rule we get 
\begin{align}\label{vareps2f}
\partial^2_{\varepsilon\varepsilon} g(0) &=-D_{g} F(0,g_0)^{-1}\Big(\partial^2_{\varepsilon} F(0,g_0)+2D^2_{\varepsilon g} F(0,g_0)g'(0)+D^2_{gg} F(0,g_0)[g'(0),g'(0)]\Big).
\end{align}
Observe  that
\begin{align}\label{dngfjn}
{F}_j(0,g)&=\textnormal{Im}\bigg\{2\Omega\Big((-1)^j Z+(1-j)d\Big) w-\gamma_jf'_j(w)+\frac{\gamma_{3-j}}{d} w\bigg\}.
\end{align}
Thus, for all $h=(\alpha_1,\alpha_2,h_1,h_2), k=(\alpha_1,\alpha_2,k_1,k_2)\in \mathbb{R}\times \mathbb{R}\times X$, one has
\begin{equation}\label{dngfj0}
\partial^2_{gg} F_j(0,g)\big[h,k\big]=(-1)^j2 \alpha_1\beta_2\textnormal{Im}\big\{w\big\}\cdot
\end{equation}
From the identity \eqref{parepsf} we get
\begin{equation}\label{dngfj}
\partial^2_{gg} F_j(0,g_0)\big[g'(0),g'(0)\big]=0\cdot
\end{equation}
%Moreover, for all $n\geq 3$ one has
%\begin{equation}\label{dngfjn}
%\partial^n_{g..g} F(0,g_0)\big[h_1,...,h_n\big]=0
%\end{equation}
Next, differentiating the identity \eqref{Fj} with respect to $\varepsilon$ we get
\begin{align}\label{varepsfj}
&\partial_\varepsilon F_j(\varepsilon,g)=\textnormal{Im}\Bigg\{2\Omega \Big[\varepsilon b_j\Big(\overline{w}+\varepsilon  b_j\overline{f_j(w)}\Big)+\Big((-1)^j{Z}-(j-1)d\Big)\Big]w b_j f'_j(w)
\\ &+2 b_j\Omega\Big(\overline{w}+2\varepsilon  b_j\overline{f_j(w)}\Big)w\Big(1+\varepsilon b_j f'_j(w)\Big)
\notag\\&+ b_j {\gamma_j} w\Big(1+2\varepsilon b_j f'_j(w)\Big)\fint_{\mathbb{T}}\frac{A\overline{B}_j-\overline{A}B_j}{A(A+\varepsilon b_j B_j)}\Big[f'_j(\tau)-\frac{B_j}{A}\Big]d\tau\notag
\\&-\varepsilon b_j^2 {\gamma_j} w\Big(1+\varepsilon b_j f'_j(w)\Big)\fint_{\mathbb{T}}\frac{B_j(A\overline{B}_j-\overline{A}B_j)}{A(A+\varepsilon b_j B_j)^2}\Big[f'_j(\tau)-\frac{B_j}{A}\Big]d\tau
\notag\\ &-\gamma_{3-j}b_j w  f'_j(w)\fint_{\mathbb{T}}\frac{\big(\overline{\tau}+\varepsilon b_{3-j} \overline{ f_{3-j}(\tau)}\big)\big(1+\varepsilon b_{3-j} f'_{3-j}(\tau)\big)}{\varepsilon C_j+\varepsilon^2D_j- d}d\tau 
\notag\\ &
-\gamma_{3-j} b_{3-j} w\Big(1+\varepsilon  b_jf'_j(w)\Big)\fint_{\mathbb{T}}\frac{\overline{ f_{3-j}(\tau)}\big(1+\varepsilon b_{3-j} f'_{3-j}(\tau)\big)+\big(\overline{\tau}+\varepsilon b_{3-j} \overline{ f_{3-j}(\tau)}\big) f'_{3-j}(\tau)}{\varepsilon C_j+\varepsilon^2D_j- d}d\tau
\notag\\&
%-\gamma_{3-j} b_{3-j} w\Big(1+\varepsilon  b_jf'_j(w)\Big)\fint_{\mathbb{T}}\frac{\big(\overline{\tau}+\varepsilon b_{3-j} \overline{ f_{3-j}(\tau)}\big) f'_{3-j}(\tau)}{\varepsilon C_j+\varepsilon^2D_j- d}d\tau
%\notag\\&
 +\gamma_{3-j}w\Big(1+\varepsilon  b_jf'_j(w)\Big)\fint_{\mathbb{T}}\frac{\big(C_j+2\varepsilon D_j\big)\big(\overline{\tau}+\varepsilon b_{3-j} \overline{ f_{3-j}(\tau)}\big)\big(1+\varepsilon b_{3-j} f'_{3-j}(\tau)\big)}{\big(\varepsilon C_j+\varepsilon^2D_j- d\big)^2}d\tau \Bigg\}.\notag%,\quad j=1,2
\end{align}
with
$$
A=\tau-w,\quad B_j=f_j(\tau)-f_j(w),\quad C_j=\big(b_{3-j}\tau+b_j w\big),\quad D_j= b_{3-j}^2 f_{3-j}(\tau)+b_j^2 f_j(w).
$$
For $\varepsilon=0$ we have
\begin{align}\label{vareps0fj}
\partial_\varepsilon F_j(0,g)=\textnormal{Im}\Bigg\{&2\Omega\Big((-1)^j{Z}-(j-1)d\Big)w b_j f'_j(w)
+ b_j {\gamma_j} w\fint_{\mathbb{T}}\frac{A\overline{B}_j-\overline{A}B_j}{A^2}\Big[f'_j(\tau)-\frac{B_j}{A}\Big]d\tau
\\&+\frac{\gamma_{3-j}b_j}{d}w  f'_j(w)+\frac{\gamma_{3-j}b_j}{d^2}w^2 \Bigg\}.\notag
\end{align}
Consequently, for all $h=(\alpha_1,\alpha_2,h_1,h_2)\in \mathbb{R}\times \mathbb{R}\times X$, one finds 
\begin{align}\label{ffeps10}
\partial^2_{\varepsilon g} F_j(0,g)h=&\alpha_1\textnormal{Im}\Bigg\{2\Big((-1)^j{Z}-(j-1)d\Big)w b_j f'_j(w)
 \Bigg\}
 +\alpha_2\textnormal{Im}\Bigg\{2\Omega(-1)^jw b_j f'_j(w)
 \Bigg\}.
 \\ &+\textnormal{Im}\Bigg\{2\Omega\Big((-1)^j{Z}-(j-1)d\Big)w b_j h'_j(w)+\frac{\gamma_{3-j}b_j}{d}w  h'_j(w)\notag
\\&+ b_j {\gamma_j} w\fint_{\mathbb{T}}\frac{2i\textnormal{Im}\{A(\overline{f_j(\tau)}-\overline{f_j(w)})\}}{A^2}\Big[h'_j(\tau)-\frac{h_j(\tau)-h_j(w)}{A}\Big]d\tau\notag
\\&+ b_j {\gamma_j} w\fint_{\mathbb{T}}\frac{2i\textnormal{Im}\{A(\overline{h_j(\tau)}-\overline{h_j(w)})\}}{A^2}\Big[f'_j(\tau)-\frac{f_j(\tau)-f_j(w)}{A}\Big]d\tau
\Bigg\}.\notag
\end{align}
Then, for $g=g_0\triangleq\Big(\frac{\gamma_1+\gamma_2}{2d^2},\frac{d\gamma_2}{\gamma_1+\gamma_2},0,0\Big)$ one gets
\begin{align}\label{ffeps1}
\partial^2_{\varepsilon g} F_j(0,g_0)h=0.%,\quad j=1,2
\end{align}
Plugging the  identities \eqref{pepsilon}, \eqref{g-1}, \eqref{dngfj} and  \eqref{ffeps1}  into \eqref{vareps2f} yields
\begin{align}\label{pareps2f}
g''(0)
 &=\bigg(0, 0,
\dfrac{b_1^2\gamma_2}{d^3\gamma_1} \overline{w}^2,
\dfrac{b_2^2\gamma_1}{d^3\gamma_2} \overline{w}^2
\bigg) \cdot
\end{align}
Next, we move to the third order derivative
\begin{align}\label{varepsf3}
\partial^3_{\varepsilon\varepsilon\varepsilon}g(0)&=-D_{g} F(0,g_0)^{-1}\Big({\partial^3_{\varepsilon\varepsilon\varepsilon}} F(0,g_0)+3\partial^3_{\varepsilon \varepsilon g} F(0,g_0)g'(0)+3\partial^3_{\varepsilon gg} F(0,g_0)\big[g'(0),g'(0)\big]
\\&+3\partial^2_{\varepsilon g} F(0,g_0)g''(0)+3\partial^2_{ gg} F(0,g_0)\big[g'(0),g''(0)\big]+\partial^2_{ggg} F(0,g_0)\big[g'(0),g'(0),g'(0)\big]\Big).\notag
\end{align}
 Owing to the identity \eqref{vareps0fj}, for all $h=(\alpha_1,\alpha_2,h_1,h_2),k=(\beta_1,\beta_2,k_1,k_2)\in \mathbb{R}\times \mathbb{R}\times X$, one has one gets
\begin{align}\label{varepsgg}
\partial^3_{\varepsilon g g} F_j(0,g)[h,k]&=\beta_1\textnormal{Im}\bigg\{2\Big((-1)^j{Z}-(j-1)d\Big)w b_j h'_j(w)\bigg\}\\ &+
\beta_2\textnormal{Im}\bigg\{2(-1)^j\Omega w b_j h'_j(w)\bigg\}+
\alpha_1\textnormal{Im}\bigg\{2\Big((-1)^j{Z}-(j-1)d\Big)w b_j k'_j(w)\bigg\}\notag\\ &+
\alpha_2\textnormal{Im}\bigg\{2(-1)^j\Omega w b_j k'_j(w)\bigg\}+(\alpha_1\beta_2+\alpha_2\beta_1)\textnormal{Im}\bigg\{2(-1)^jw b_j f'_j(w)\bigg\}\notag
\\ & +\textnormal{Im}\Bigg\{ b_j {\gamma_j} w\fint_{\mathbb{T}}\frac{2i\textnormal{Im}\{A(\overline{k_j(\tau)}-\overline{k_j(w)})\}}{A^2}\Big[h'_j(\tau)-\frac{h_j(\tau)-h_j(w)}{A}\Big]d\tau\notag
\\&+ b_j {\gamma_j} w\fint_{\mathbb{T}}\frac{2i\textnormal{Im}\{A(\overline{h_j(\tau)}-\overline{h_j(w)})\}}{A^2}\Big[k'_j(\tau)-\frac{k_j(\tau)-k_j(w)}{A}\Big]d\tau \Bigg\}.\notag
\end{align}
Thus, for $g=g_0\triangleq\Big(\frac{\gamma_1+\gamma_2}{2d^2},\frac{d\gamma_2}{\gamma_1+\gamma_2},0,0\Big)$, $h=g'(0)$ and $k=g'(0)$ we find
%\begin{align*}
%\partial^3_{\varepsilon g g} F_j(0,g_0)[h,k]&=-(\alpha_1+\beta_1)\frac{2\gamma_{3-j}b_j }{\gamma_1+\gamma_2}\textnormal{Im}\Big\{w h'_j(w)\Big\}+(\alpha_2+\beta_2)(-1)^jb_j\frac{\gamma_1+\gamma_2}{d^2}\textnormal{Im}\Big\{ w  h'_j(w)
%\Big\}\\ & +\textnormal{Im}\Bigg\{ b_j {\gamma_j} w\fint_{\mathbb{T}}\frac{2i\textnormal{Im}\{A(\overline{k_j(\tau)}-\overline{k_j(w)})\}}{A^2}\Big[h'_j(\tau)-\frac{h_j(\tau)-h_j(w)}{A}\Big]d\tau
%\\&+ b_j {\gamma_j} w\fint_{\mathbb{T}}\frac{2i\textnormal{Im}\{A(\overline{h_j(\tau)}-\overline{h_j(w)})\}}{A^2}\Big[k'_j(\tau)-\frac{k_j(\tau)-k_j(w)}{A}\Big]d\tau \Bigg\}.%,\quad j=1,2
%\end{align*}

\begin{align}\label{varepsgg3}
\partial^3_{\varepsilon g g} F_j(0,g_0)[g'(0),g'(0)]%\\ &=2b_j {\gamma_j}\textnormal{Im}\Bigg\{  w\fint_{\mathbb{T}}\frac{2i\textnormal{Im}\{A(\overline{\partial_\varepsilon f_j(0,\tau)}-\overline{\partial_\varepsilon f_j((w)})\}}{(\tau-w)^2}\Big[\partial^2_{\varepsilon\tau} f_j(0,\tau)-\frac{\partial_\varepsilon f_j(0,\tau)-\partial_\varepsilon f_j(0,w)}{(\tau-w)}\Big]d\tau\Bigg\}\notag\\ &
&=2b_j {\gamma_j}\textnormal{Im}\Bigg\{  -w\frac{b_j^2\gamma_{3-j}^2}{d^4\gamma_j^2}\fint_{\mathbb{T}}\Big(1-\frac{(\overline{\tau}-\overline{w})^2}{(\tau-w)^2}\Big)\Big(\overline{\tau}^2-\frac{\overline{\tau}-\overline{w}}{\tau-w}\Big)d\tau\Bigg\}
\\ &=2b_j {\gamma_j}\textnormal{Im}\Bigg\{  -\frac{b_j^2\gamma_{3-j}^2}{d^4\gamma_j^2} \Bigg\}\notag\\ &=0.\notag
\end{align}
Next, differentiating \eqref{varepsfj} with respect to $\varepsilon$ gives
\begin{align}\label{verepsepsf}
&\partial^2_{\varepsilon\varepsilon} F_j(\varepsilon,g)=\textnormal{Im}\Bigg\{4\Omega b_j^2\overline{f_j(w)}w\Big(1+\varepsilon b_j f'_j(w)\Big) +4b_j^2\Omega\big(\overline{w}+2\varepsilon  b_j\overline{f_j(w)}\big)w  f'_j(w)
\\&+ 2b_j^2 {\gamma_j} w f'_j(w)\fint_{\mathbb{T}}\frac{A\overline{B}_j-\overline{A}B_j}{A(A+\varepsilon b_j B_j)}\Big[f'_j(\tau)-\frac{B_j}{A}\Big]d\tau\notag
\notag\\&-2 b_j^2 {\gamma_j} w\Big(1+2\varepsilon b_j f'_j(w)\Big)\fint_{\mathbb{T}}\frac{B_j(A\overline{B}_j-\overline{A}B_j)}{A(A+\varepsilon b_j B_j)^2}\Big[f'_j(\tau)-\frac{B_j}{A}\Big]d\tau\notag
\notag\\&+2\varepsilon b_j^3 {\gamma_j} w\Big(1+\varepsilon b_j f'_j(w)\Big)\fint_{\mathbb{T}}\frac{B_j^2(A\overline{B}_j-\overline{A}B_j)}{A(A+\varepsilon b_j B_j)^3}\Big[f'_j(\tau)-\frac{B_j}{A}\Big]d\tau
\notag\\ &-2\gamma_{3-j}  b_j b_{3-j} w f'_j(w)\fint_{\mathbb{T}}\frac{\overline{ f_{3-j}(\tau)}\big(1+\varepsilon b_{3-j} f'_{3-j}(\tau)\big)+\big(\overline{\tau}+\varepsilon b_{3-j} \overline{ f_{3-j}(\tau)}\big)  f'_{3-j}(\tau)}{\varepsilon C_j+\varepsilon^2D_j- d}d\tau
\notag\\ &+2\gamma_{3-j}w  b_jf'_j(w)\fint_{\mathbb{T}}\frac{\big(C_j+2\varepsilon D_j\big)\big(\overline{\tau}+\varepsilon b_{3-j} \overline{ f_{3-j}(\tau)}\big)\big(1+\varepsilon b_{3-j} f'_{3-j}(\tau)\big)}{\big(\varepsilon C_j+\varepsilon^2D_j- d\big)^2}d\tau
\notag\\ &-2b_{3-j}^2\gamma_{3-j}w\Big(1+\varepsilon  b_jf'_j(w)\Big)\fint_{\mathbb{T}}\frac{   f'_{3-j}(\tau)\overline{ f_{3-j}(\tau)}}{\varepsilon C_j+\varepsilon^2D_j- d}d\tau
\notag\\ &+2\gamma_{3-j}b_{3-j} w\Big(1+\varepsilon  b_jf'_j(w)\Big)\fint_{\mathbb{T}}\frac{ \big(C_j+2\varepsilon D_j\big)\overline{ f_{3-j}(\tau)}\big(1+\varepsilon b_{3-j} f'_{3-j}(\tau)\big)}{(\varepsilon C_j+\varepsilon^2D_j- d)^2}d\tau
\notag\\ &+2\gamma_{3-j}b_{3-j} w\Big(1+\varepsilon  b_jf'_j(w)\Big)\fint_{\mathbb{T}}\frac{ \big(C_j+2\varepsilon D_j\big)\big(\overline{\tau}+\varepsilon b_{3-j} \overline{ f_{3-j}(\tau)}\big)  f'_{3-j}(\tau)}{(\varepsilon C_j+\varepsilon^2D_j- d)^2}d\tau
\notag\\& +2\gamma_{3-j}w\Big(1+\varepsilon  b_jf'_j(w)\Big)\fint_{\mathbb{T}}\frac{ D_j\big(\overline{\tau}+\varepsilon b_{3-j} \overline{ f_{3-j}(\tau)}\big)\big(1+\varepsilon b_{3-j} f'_{3-j}(\tau)\big)}{\big(\varepsilon C_j+\varepsilon^2D_j- d\big)^2}d\tau
\notag\\& -2\gamma_{3-j}w\Big(1+\varepsilon  b_jf'_j(w)\Big)\fint_{\mathbb{T}}\frac{\big(C_j+2\varepsilon D_j\big)^2\big(\overline{\tau}+\varepsilon b_{3-j} \overline{ f_{3-j}(\tau)}\big)\big(1+\varepsilon b_{3-j} f'_{3-j}(\tau)\big)}{\big(\varepsilon C_j+\varepsilon^2D_j- d\big)^3}d\tau \Bigg\},\notag
\end{align}
where we have used the notation 
$$
A=\tau-w,\quad B_j=f_j(\tau)-f_j(w),\quad C_j=\big(b_{3-j}\tau+b_j w\big),\quad D_j= b_{3-j}^2 f_{3-j}(\tau)+b_j^2 f_j(w).
$$
At $\varepsilon=0$ one has
\begin{align*}
\partial^2_{\varepsilon\varepsilon} F_j(0,g)&=2\textnormal{Im}\Bigg\{2\Omega b_j^2\Big(w \overline{f_j(w)}+ f'_j(w)\Big)
+ b_j^2 {\gamma_j} w f'_j(w)\fint_{\mathbb{T}}\frac{A\overline{B}_j-\overline{A}B_j}{A^2}\Big[f'_j(\tau)-\frac{B_j}{A}\Big]d\tau\notag
\\&- b_j^2 {\gamma_j} w\fint_{\mathbb{T}}\frac{B_j(A\overline{B}_j-\overline{A}B_j)}{A^3}\Big[f'_j(\tau)-\frac{B_j}{A}\Big]d\tau\notag
+\frac{1}{d^2}\gamma_{3-j}w^2  b_j^2f'_j(w)
\\ &+\frac{1}{d}b_{3-j}^2\gamma_{3-j}w\fint_{\mathbb{T}}{   f'_{3-j}(\tau)\overline{ f_{3-j}(\tau)}}d\tau
 +\frac{1}{d^2}\gamma_{3-j}b_j^2wf_j(w)
 +\frac{1}{d^3}\gamma_{3-j}b_j^2w^3 \Bigg\}.%,\quad j=1,2
\end{align*}
Thus, for all $h=(\alpha_1,\alpha_2,h_1,h_2)\in \mathbb{R}\times \mathbb{R}\times X$ one finds
\begin{align}\label{epsepsg}
\partial^3_{\varepsilon\varepsilon g} F_j(0,g)h&=2\alpha_1\textnormal{Im}\Bigg\{2 b_j^2\Big(w\overline{f_j(w)}+ f'_j(w)\Big)
\Bigg\}\\ & +2\textnormal{Im}\Bigg\{2\Omega b_j^2\Big(w\overline{h_j(w)}+  h'_j(w)\Big)+\frac{1}{d^2}\gamma_{3-j}w^2  b_j^2h'_j(w)
 +\frac{1}{d^2}\gamma_{3-j}b_j^2wh_j(w)\notag
\\&+ b_j^2 {\gamma_j} w \fint_{\mathbb{T}}\Big[h'_j(w)-\frac{h_j(\tau)-h_j(w)}{A}\Big]\frac{A\overline{B}_j-\overline{A}B_j}{A^2}\Big[f'_j(\tau)-\frac{B_j}{A}\Big]d\tau\notag
\\&+ b_j^2 {\gamma_j} w \fint_{\mathbb{T}}\Big[f'_j(w)-\frac{B_j}{A}\Big]\frac{2i\textnormal{Im}\{A(\overline{h_j(\tau)}-\overline{h_j(w)})\}}{A^2}\Big[f'_j(\tau)-\frac{B_j}{A}\Big]d\tau\notag
\\&+ b_j^2 {\gamma_j} w \fint_{\mathbb{T}}\Big[f'_j(w)-\frac{B_j}{A}\Big]\frac{A\overline{B}_j-\overline{A}B_j}{A^2}\Big[h'_j(\tau)-\frac{h(\tau)-h_j(w)}{A}\Big]d\tau\notag
 \Bigg\}\\ &+\frac{2}{d}b_{3-j}^2\gamma_{3-j}\textnormal{Im}\Bigg\{w\fint_{\mathbb{T}}{   f'_{3-j}(\tau)\overline{ h_{3-j}(\tau)}}d\tau+w\fint_{\mathbb{T}}{   h'_{3-j}(\tau)\overline{ f_{3-j}(\tau)}}d\tau
 \Bigg\}.\notag
\end{align}
Therefore, for $g=g_0\triangleq\Big(\frac{\gamma_1+\gamma_2}{2d^2},\frac{d\gamma_2}{\gamma_1+\gamma_2},0,0\Big)$, $h=g'(0)$ we get
%\begin{align*}
%\partial^3_{\varepsilon\varepsilon g} F_j(0,g_0)h&=\frac{2b_j^2}{d^2}\textnormal{Im}\Bigg\{(\gamma_1+\gamma_2)\Big(w\overline{h_j(w)}+ h'_j(w)\Big)+\gamma_{3-j}w^2  h'_j(w)
% +\gamma_{3-j}wh_j(w)
% \Bigg\}.%,\quad j=1,2
%\end{align*}
% Thus,
\begin{align}\label{varepsepsg1derv}
\partial^3_{\varepsilon\varepsilon g} F_j(0,g_0)g'(0)&=\frac{4b_j^3}{d^4}\frac{\gamma_{3-j}}{\gamma_j}(\gamma_1+\gamma_2)\textnormal{Im}\Big\{w^2
 \Big\}.
\end{align}
Next, in view of \eqref{dngfj0}  one has
\begin{equation}\label{dngfjn2}
\partial^3_{gg} F(0,g_0)\big[g'(0),g''(0)\big]=0
\end{equation}
and 
\begin{equation}\label{dngfjn}
\partial^3_{ggg} F(0,g_0)\big[h,k,l\big]=0\quad \textnormal{for all }\quad h,k,l\in  \mathbb{R}\times \mathbb{R}\times X.
\end{equation}
Putting together the identities \eqref{pepsilon}, \eqref{varepsgg3}, \eqref{varepsepsg1derv}, \eqref{dngfjn2} \eqref{dngfjn}  and \eqref{varepsf3} we conclude that
\begin{align}\label{pareps2ff}
\partial^3_{\varepsilon\varepsilon\varepsilon} g(0,w)
 &=\frac{12}{d^4}\bigg(0, 0,
b_1^3\Big(1+\dfrac{\gamma_2}{\gamma_1}\Big)\dfrac{\gamma_2}{\gamma_1}\overline{w}+b_1^3\dfrac{\gamma_2}{\gamma_1}\overline{w}^3,
b_2^3\Big(1+\dfrac{\gamma_1}{\gamma_2}\Big)\dfrac{\gamma_1}{\gamma_2}\overline{w}+b_2^3\dfrac{\gamma_1}{\gamma_2}\overline{w}^3 
\bigg)
\cdot
\end{align}
Now we move to the fourth order derivative in $\varepsilon$ of $g$. By the composition rule we get
\begin{align*}
\partial^4_{\varepsilon\varepsilon\varepsilon\varepsilon}g(0)=&-D_{g} F(0,g_0)^{-1}\Big({\partial^4_{\varepsilon\varepsilon\varepsilon\varepsilon}} F(0,g_0)+4\partial^4_{\varepsilon\varepsilon \varepsilon g} F(0,g_0)g'(0)
+6\partial^3_{\varepsilon\varepsilon g} F(0,g_0)g^{(2)}(0)\\ &+4\partial_{\varepsilon g} F(0,g_0)g^{(3)}(0) +6\partial^4_{\varepsilon \varepsilon g g} F(0,g_0)\big[g'(0),g'(0)\big]+12\partial^3_{\varepsilon g g} F(0,g_0)\big[g''(0),g'(0)\big]\notag\\ &+4\partial^4_{\varepsilon g g g} F(0,g_0)\big[g'(0),g'(0),g'(0)\big]\notag
+3\partial^2_{ gg} F(0,g_0)\big[g''(0),g''(0)\big]\\&+4\partial^2_{ gg} F(0,g_0)\big[g^{(3)}(0),g'(0)\big]+6\partial^3_{ ggg} F(0,g_0)\big[g'(0),g''(0),g'(0)\big]\notag\\ &+\partial^4_{gggg} F(0,g_0)\big[g'(0),g'(0),g'(0),g'(0)\big]\Big).\notag
\end{align*}
In view of \eqref{dngfj0} one has
\begin{equation}\label{pargg0}
\partial^2_{ gg} F(0,g_0)\big[g^{(3)}(0),g'(0)\big]=\partial^2_{ gg} F(0,g_0)\big[g''(0),g''(0)\big]=0
\end{equation}
and 
\begin{equation}\label{parggg0}
\partial^4_{gggg} F(0,g_0)\big[g'(0),g'(0),g'(0),g'(0)\big]=\partial^3_{ ggg} F(0,g_0)\big[g'(0),g''(0),g'(0)\big]=0.
\end{equation}
Moreover, from \eqref{ffeps1} we find
\begin{equation}\label{parggg0}
\partial_{\varepsilon g} F(0,g_0)g^{(3)}(0) =0
\end{equation}
and by \eqref{varepsgg} we obtain
\begin{align*}
\partial^3_{\varepsilon g g g} F_j(0,g)[h,k,l]&=(\beta_1\sigma_2+\beta_2\sigma_1)\textnormal{Im}\Bigg\{2(-1)^jw b_j h'_j(w)\Bigg\}+(\alpha_1\sigma_2+\alpha_2\sigma_1)\textnormal{Im}\Bigg\{2(-1)^jw b_j k'_j(w)\Bigg\}\\ &+(\beta_1\alpha_2+\beta_2\alpha_1)\textnormal{Im}\Bigg\{2(-1)^jw b_j l'_j(w)\Bigg\}.%,\quad j=1,2
\end{align*}
Consequently 
\begin{align*}
\partial^3_{\varepsilon g g g} F_j(0,g_0)[g'(0),g'(0),g'(0)]&=0,\quad \textnormal{for all }\quad h,k,l\in  \mathbb{R}\times \mathbb{R}\times X.
\end{align*}
Therefore, we conclude that
\begin{align}\label{varepsf41}
g^{(4)}(0)=-D_{g} F(0,g_0)^{-1}&\Big({\partial^4_{\varepsilon\varepsilon\varepsilon\varepsilon}} F(0,g_0)+4\partial^4_{\varepsilon\varepsilon \varepsilon g} F(0,g_0)g'(0)+6\partial^3_{\varepsilon\varepsilon g} F(0,g_0)g^{(2)}(0)\\ & +6\partial^4_{\varepsilon \varepsilon g g} F(0,g_0)\big[g'(0),g'(0)\big]+12\partial^3_{\varepsilon g g} F(0,g_0)\big[g''(0),g'(0)\big]\Big).\notag
\end{align}
From \eqref{verepsepsf}  we get
%\begin{align*}
%\partial^3_{\varepsilon\varepsilon\varepsilon} F_j(0,g)=6\textnormal{Im}&\Bigg\{2\Omega b_j^3\overline{f_j(w)} f'_j(w)w 
%- b_j^3 {\gamma_j} w f'_j(w)\fint_{\mathbb{T}}\frac{B_j(A\overline{B}_j-\overline{A}B_j)}{A^3}\Big[f'_j(\tau)-\frac{B_j}{A}\Big]d\tau
%\\&+ b_j^3 {\gamma_j} w\fint_{\mathbb{T}}\frac{B_j^2(A\overline{B}_j-\overline{A}B_j)}{A^4}\Big[f'_j(\tau)-\frac{B_j}{A}\Big]d\tau\notag
%\\ &+\frac1d\gamma_{3-j}w  b_jb_{3-j}^2 f'_j(w)\fint_{\mathbb{T}}{\overline{ f_{3-j}(\tau)} f'_{3-j}(\tau)}d\tau
%\\ &+\frac{1}{d^2}\gamma_{3-j}w  b_jb_{3-j}f'_j(w)\fint_{\mathbb{T}}{C_j\big[  \overline{ f_{3-j}(\tau)}+\overline{\tau} f'_{3-j}(\tau)\big]}d\tau
%\\ &+\gamma_{3-j}w  b_jf'_j(w)\fint_{\mathbb{T}}\frac{ D_j\overline{\tau}}{d^2}d\tau
%+\gamma_{3-j}b_{3-j} w\fint_{\mathbb{T}}\frac{  D_j\big[\overline{ f_{3-j}(\tau)}+\overline{\tau} f'_{3-j}(\tau)\big]}{d^2}d\tau
%\\ &+\gamma_{3-j}b_{3-j}^2 w\fint_{\mathbb{T}}\frac{ C_j\overline{ f_{3-j}(\tau)} f'_{3-j}(\tau)}{d^2}d\tau
% +\gamma_{3-j}b_{3-j} w\fint_{\mathbb{T}}\frac{C_j^2\big[\overline{ f_{3-j}(\tau)}+\overline{\tau} f'_{3-j}(\tau)\big]}{d^3}d\tau
%\\& +\gamma_{3-j}w  b_jf'_j(w)\fint_{\mathbb{T}}\frac{C_j^2\overline{\tau}}{d^3}d\tau
% +2\gamma_{3-j}w\fint_{\mathbb{T}}\frac{D_jC_j\overline{\tau}}{d^3}d\tau +\gamma_{3-j}w\fint_{\mathbb{T}}\frac{C_j^3\overline{\tau}}{d^4}d\tau \Bigg\}.%,\quad j=1,2
%\end{align*}
%Thus, for all $h\in  \mathbb{R}\times \mathbb{R}\times X$ one has
\begin{align*}
\partial^4_{\varepsilon\varepsilon\varepsilon g}  F_j(0,g_0)h=\frac{6\gamma_{3-j}}{d^3}\textnormal{Im}\Bigg\{&w^3  b_j^3h'_j(w)
  +b_{3-j}w\fint_{\mathbb{T}}{(b_jw+b_{3-j}\tau)^2\big[\overline{ h_{3-j}(\tau)}+\overline{\tau} h'_{3-j}(\tau)\big]}d\tau\\ &+2w\fint_{\mathbb{T}}\big(b_{j}^2{ h_{j}(w)}+{b_{3-j}^2{ h_{3-j}(\tau)}\big)(b_jw+b_{3-j}\tau)\overline{\tau}}d\tau \Bigg\},%\\ &=-\frac{6\gamma_{3-j}}{d^3}\textnormal{Im}\Bigg\{w^3  b_j^3h'_j(w)+w^2  b_j^3h_j(w)  +b_{3-j} w\fint_{\mathbb{T}}{(b_jw-b_{3-j}\tau)^2\overline{\tau} h'_{3-j}(\tau)}d\tau\\ &-b_{3-j}^4 w\fint_{\mathbb{T}}{ h_{3-j}(\tau)}d\tau \Bigg\}.
\end{align*}
for all $h\in  \mathbb{R}\times \mathbb{R}\times X$.
Replacing $h$ by $g'(0)=\Big(0,0,\frac{b_1\gamma_2}{d^2\gamma_1}\overline{w},\frac{b_2\gamma_1}{d^2\gamma_2}\overline{w}\Big)$ gives
%\begin{align*}
%\partial^4_{\varepsilon\varepsilon\varepsilon g} F_j(0,g_0)&h=-\frac{6}{d^3}\gamma_{3-j}\textnormal{Im}\Bigg\{
%w^3  b_j^3h'_j(w)
%+w^2  b_j^3h_j(w)  \Bigg\}\\ &-\frac{6}{d^3}\gamma_{3-j}b_{3-j} \textnormal{Im}\Bigg\{w\fint_{\mathbb{T}}{(b_jw-b_{3-j}\tau)^2\big[\overline{ h_{3-j}(\tau)}+\overline{\tau} h'_{3-j}(\tau)\big]}d\tau
% -b_{3-j}^3 w\fint_{\mathbb{T}}{ h_{3-j}(\tau)}d\tau \Bigg\}.
%\end{align*}
\begin{align*}
\partial^4_{\varepsilon\varepsilon\varepsilon g} F_j(0,g_0)g'(0)=\frac{6\gamma_{3-j}}{d^5}\textnormal{Im}\Bigg\{&-\frac{b_j^4\gamma_{3-j}}{\gamma_j}w+\frac{b_{3-j}^2\gamma_j}{\gamma_{3-j}}w\fint_{\mathbb{T}}{(b_jw+b_{3-j}\tau)^2\big[\tau-\overline{\tau}^3\big]}d\tau \\ &+2w\fint_{\mathbb{T}}\Big(\frac{b_{j}^3\gamma_{3-j}}{\gamma_{j}}\overline{w}+\frac{b_{3-j}^3\gamma_j}{\gamma_{3-j}}\overline{\tau}\Big)(b_jw+b_{3-j}\tau)\overline{\tau}d\tau \Bigg\}.%\\ &=-\frac{6\gamma_{3-j}}{d^5}\textnormal{Im}\Bigg\{-\frac{b_j^4\gamma_{3-j}}{\gamma_j}w-\frac{b_{3-j}^4\gamma_j}{\gamma_{3-j}}w +2w\Big(\frac{b_{j}^4\gamma_{3-j}}{\gamma_{j}}+\frac{b_{3-j}^4\gamma_j}{\gamma_{3-j}}\Big) \Bigg\}\\ &=\frac{6b_{j}^4\gamma_{3-j}^2}{d^5\gamma_j}+\frac{6b_{j}^4}{d^5}\gamma_j.
\end{align*}
Thus
\begin{align}\label{parepsepsepsg}
\partial^4_{\varepsilon\varepsilon\varepsilon g} F_j(0,g_0)g'(0)&=\frac{6}{d^5}\bigg(\gamma_j b_{3-j}^4+\frac{\gamma_{3-j}^2}{\gamma_j}b_{j}^4\bigg)\textnormal{Im}\Big\{{w}\Big\}.
\end{align}
%\begin{align*}
%\partial^4_{\varepsilon\varepsilon\varepsilon f_j} F_j(0,g)f'_j(0)&= -\frac{6b_j^4}{d^3}\frac{\gamma_{3-j}^2}{\gamma_j}\textnormal{Im}\Bigg\{
%-w 
%+w   \Bigg\}=0.%,\quad j=1,2
%\end{align*}
%\begin{align*}
%\partial^4_{\varepsilon\varepsilon\varepsilon f_{3-j}} F_j(0,g_0){f'_{3-j}}&=-\frac{6}{d^5}b_{3-j}^2{\gamma_{j}}\textnormal{Im}\Bigg\{w\fint_{\mathbb{T}}{(b_jw-b_{3-j}\tau)^2\big[\tau-\overline{\tau}^3\big]}d\tau
% +b_{3-j} w\fint_{\mathbb{T}}{(b_jw-b_{3-j}\tau)\overline{\tau}^2}d\tau \Bigg\}\\ 
% &=-\frac{6b_{3-j}^4}{d^5}\gamma_{j}\textnormal{Im}\Bigg\{w -w \Bigg\}=0.%,\quad j=1,2
%\end{align*}
%%\begin{align*}
%\partial^4_{\varepsilon\varepsilon\varepsilon f_j} F_j(0,g)h_j&= -6\gamma_{3-j}\textnormal{Im}\Bigg\{
%w  b_jh'_j(w)\fint_{\mathbb{T}}\frac{(b_jw-b_{3-j}\tau)^2\overline{\tau}}{d^3}d\tau
%+w  b_j^2h_j(w)\fint_{\mathbb{T}}\frac{(b_jw-b_{3-j}\tau)\overline{\tau}}{d^3}d\tau  \Bigg\}\\ & =-\frac{6}{d^3}\gamma_{3-j}\textnormal{Im}\Bigg\{
%w^3  b_j^3h'_j(w)
%+w^2  b_j^3h_j(w)  \Bigg\}.%,\quad j=1,2
%\end{align*}
%\begin{align*}
%\partial^4_{\varepsilon\varepsilon\varepsilon f_{3-j}} F_j(0,g_0){h_{3-j}}&=-\frac{6}{d^3}\gamma_{3-j}b_{3-j} \textnormal{Im}\Bigg\{w\fint_{\mathbb{T}}{(b_jw-b_{3-j}\tau)^2\big[\overline{ h_{3-j}(\tau)}+\overline{\tau} h'_{3-j}(\tau)\big]}d\tau
%\\ & +b_{3-j}w\fint_{\mathbb{T}}{{ h_{3-j}(\tau)}(b_jw-b_{3-j}\tau)\overline{\tau}}d\tau \Bigg\}.%,\quad j=1,2
%\end{align*}
Replacing $h$ by $g''(0)=\Big(0,0,\frac{b_1^2\gamma_2}{d^3\gamma_1}\overline{w}^2,\frac{b_2^2\gamma_1}{d^3\gamma_2}\overline{w}^2\Big)$ in  \eqref{epsepsg} we get
\begin{align}\label{parepsepsg2}
\partial^3_{\varepsilon\varepsilon g} F_j(0,g_0)g''(0)&=\frac{2b_j^4}{d^5}\frac{\gamma_{3-j}}{\gamma_j}\textnormal{Im}\Bigg\{(\gamma_1+\gamma_2)\Big(w^3- 2\overline{w}^3\Big)-\gamma_{3-j}
 \overline{w}
 \Bigg\}\\ &=\frac{6b_j^4}{d^5}\frac{\gamma_{3-j}}{\gamma_j}(\gamma_1+\gamma_2)\textnormal{Im}\Big\{w^3\Big\}+\frac{2b_j^4}{d^5}\frac{\gamma_{3-j}^2}{\gamma_j}\textnormal{Im}\Big\{
{w}
 \Big\}.\notag
\end{align}
From \eqref{varepsgg} we get
\begin{align*}
&\partial^3_{\varepsilon g g} F_j(0,g_0)[g''(0),g'(0)]\\ &=b_j {\gamma_j} \textnormal{Im}\Bigg\{ w\fint_{\mathbb{T}}\frac{2i\textnormal{Im}\{A(\overline{\partial_\varepsilon f_j(0,\tau)}-\overline{\partial_\varepsilon f_j(0,w)})\}}{A^2}\Big[\partial^3_{\varepsilon \varepsilon \tau}f_j(0,\tau)-\frac{\partial^2_{\varepsilon \varepsilon}f_j(0,\tau)-\partial^2_{\varepsilon \varepsilon}f_j(0,w)}{A}\Big]d\tau
\\&+  w\fint_{\mathbb{T}}\frac{2i\textnormal{Im}\{A(\overline{\partial^2_{\varepsilon \varepsilon}f_j(0,\tau)}-\overline{\partial^2_{\varepsilon \varepsilon}f_j(0,w)})\}}{A^2}\Big[\partial^2_{\varepsilon\tau} f_j(0,\tau)-\frac{\partial_\varepsilon f_j(0,\tau)-\partial_\varepsilon f_j(0,w)}{A}\Big]d\tau \Bigg\}%,\quad j=1,2
\\ &=\textnormal{Im}\Bigg\{- b_j {\gamma_j} \frac{b_j^3\gamma_{3-j}^2}{d^5\gamma_j^2} w\fint_{\mathbb{T}}\bigg(1- \frac{(\overline{\tau}-\overline{w})^2}{(\tau-w)^2}\bigg)\bigg(2\overline{\tau}^3+\frac{\overline{\tau}^2-\overline{w}^2}{\tau-w}\bigg)d\tau
\\&- b_j {\gamma_j} \frac{b_j^3\gamma_{3-j}^2}{d^5\gamma_j^2}  w\fint_{\mathbb{T}}\bigg(w+\tau- \frac{(\overline{\tau}-\overline{w})(\overline{\tau}^2-\overline{w}^2)}{(\tau-w)^2}\bigg)\Big(\overline{\tau}^2+\frac{\overline{\tau}-\overline{w}}{\tau-w}\Big)d\tau \Bigg\}.
\end{align*}
Now using the fact that
\begin{equation*}
\overline{\tau}-\overline{w}=-(\tau-w)\overline{w}\overline{\tau}\quad\forall w,\tau\in \mathbb{T},
\end{equation*} 
we get
\begin{align}\label{parepsggg2g1}
&\partial^3_{\varepsilon g g} F_j(0,g_0)[g''(0),g'(0)]=-\frac{b_j^4\gamma_{3-j}^2}{d^5\gamma_j} \textnormal{Im}\Big\{{w}\Big\}.
\end{align}
%\begin{align*}
%\partial^3_{\varepsilon g g } F_j(0,g)[h,k]&=\beta_1\textnormal{Im}\Bigg\{2\bigg[(-1)^j{Z}-(j-1)d\bigg]w b_j h'_j(w)\Bigg\}\\ &\alpha_1\beta_2\textnormal{Im}\Bigg\{2(-1)^jw b_j f'_j(w)\Bigg\}+\beta_2\textnormal{Im}\Bigg\{2(-1)^j\Omega w b_j h'_j(w)
%\Bigg\}\\ & +\textnormal{Im}\Bigg\{2\Omega\bigg[(-1)^j{Z}-(j-1)d\bigg]w b_j h'_j(w)+\frac{\gamma_{3-j}}{d}w  b_jh'_j(w)
%\\&+ b_j {\gamma_j} w\fint_{\mathbb{T}}\frac{2i\textnormal{Im}\{A(\overline{k_j(\tau)}-\overline{k_j(w)})\}}{A^2}\Big[h'_j(\tau)-\frac{h_j(\tau)-h_j(w)}{A}\Big]d\tau
%\\&+ b_j {\gamma_j} w\fint_{\mathbb{T}}\frac{2i\textnormal{Im}\{A(\overline{h_j(\tau)}-\overline{h_j(w)})\}}{A^2}\Big[k'_j(\tau)-\frac{k_j(\tau)-k_j(w)}{A}\Big]d\tau \Bigg\}.%,\quad j=1,2
%\end{align*}
%
%\begin{align*}
%\partial^3_{\varepsilon g g g} F_j(0,g)[h,k,l]&=\beta_1\sigma_2\textnormal{Im}\Bigg\{2(-1)^jw b_j h'_j(w)\Bigg\}\\ &\alpha_1\beta_2\textnormal{Im}\Bigg\{2(-1)^jw b_j l'_j(w)\Bigg\}+\beta_2\sigma_1\textnormal{Im}\Bigg\{2(-1)^j w b_j h'_j(w)
%\Bigg\}\\ & +\sigma_1\textnormal{Im}\Bigg\{2\bigg[(-1)^j{Z}-(j-1)d\bigg]w b_j h'_j(w) \Bigg\}+\sigma_2\textnormal{Im}\Bigg\{2\Omega (-1)^j w b_j h'_j(w) \Bigg\}.%,\quad j=1,2
%\end{align*}
%
%\begin{align*}
%\partial^3_{\varepsilon g g g} F_j(0,g_0)[g'(0),g'(0),g'(0)]&=0.%,\quad j=1,2
%\end{align*}
From \eqref{epsepsg} we find
%\begin{align*}
%\partial^3_{\varepsilon\varepsilon g} F_j(0,g)h&=\alpha_1\textnormal{Im}\Bigg\{4 b_j^2\Big(w\overline{f_j(w)}+ f'_j(w)\Big)
%\Bigg\}\\ & +2\textnormal{Im}\Bigg\{2\Omega b_j^2\Big(w\overline{h_j(w)}+  h'_j(w)\Big)+\frac{1}{d^2}\gamma_{3-j}w^2  b_j^2h'_j(w)
% +\frac{1}{d^2}\gamma_{3-j}b_j^2wh_j(w)
%\\&+ b_j^2 {\gamma_j} w \fint_{\mathbb{T}}\Big[h'_j(w)-\frac{h_j(\tau)-h_j(w)}{A}\Big]\frac{A\overline{B}_j-\overline{A}B_j}{A^2}\Big[f'_j(\tau)-\frac{B_j}{A}\Big]d\tau\notag
%\\&+ b_j^2 {\gamma_j} w \fint_{\mathbb{T}}\Big[f'_j(w)-\frac{B_j}{A}\Big]\frac{2i\textnormal{Im}\{A(\overline{h_j(\tau)}-\overline{h_j(w)})\}}{A^2}\Big[f'_j(\tau)-\frac{B_j}{A}\Big]d\tau\notag
%\\&+ b_j^2 {\gamma_j} w \fint_{\mathbb{T}}\Big[f'_j(w)-\frac{B_j}{A}\Big]\frac{A\overline{B}_j-\overline{A}B_j}{A^2}\Big[h'_j(\tau)-\frac{h(\tau)-h_j(w)}{A}\Big]d\tau\notag
% \Bigg\}\\ &+\frac{2}{d}b_{3-j}^2\gamma_{3-j}\textnormal{Im}\Bigg\{w\fint_{\mathbb{T}}{   f'_{3-j}(\tau)\overline{ h_{3-j}(\tau)}}d\tau+w\fint_{\mathbb{T}}{   h'_{3-j}(\tau)\overline{ f_{3-j}(\tau)}}d\tau
% \Bigg\}.%,\quad j=1,2
%\end{align*}
%
\begin{align*}
\partial^3_{\varepsilon\varepsilon g g} F_j(0,g_0)[h,k]&=\alpha_1\textnormal{Im}\Bigg\{4 b_j^2\Big(w\overline{k_j(w)}+ k'_j(w)\Big)
\Bigg\} +2\beta_1\textnormal{Im}\Bigg\{2 b_j^2\Big(w\overline{h_j(w)}+  h'_j(w)\Big)\notag \Bigg\}
\\ &+\frac{2}{d}b_{3-j}^2\gamma_{3-j}\textnormal{Im}\Bigg\{w\fint_{\mathbb{T}}{   k'_{3-j}(\tau)\overline{ h_{3-j}(\tau)}}d\tau+w\fint_{\mathbb{T}}{   h'_{3-j}(\tau)\overline{ k_{3-j}(\tau)}}d\tau
 \Bigg\}.%,\quad j=1,2
\end{align*}
Therefore 
\begin{align}\label{epsepsggg1g1}
\partial^3_{\varepsilon\varepsilon g g} F_j(0,g_0)[g'(0),g'(0)]&=-\frac{4b_{3-j}^4}{d^5}\frac{\gamma_j^2}{\gamma_{3-j}}\textnormal{Im}\Big\{w \Big\}.%,\quad j=1,2
\end{align}
Plugging in the identities \eqref{pepsilon}, \eqref{parepsepsepsg} \eqref{parepsepsg2}, \eqref{parepsggg2g1} and \eqref{epsepsggg1g1} into \eqref{varepsf41} we get
\begin{align}\label{varepsf41f}
g^{(4)}(0)=-\frac{24}{d^{5}}D_{g} F(0,g_0)^{-1}
%\Bigg(\frac{24}{d^{5}}\begin{pmatrix}
%\gamma_2 b_1^4\\
%\gamma_1 b_2^4 
%\end{pmatrix}\textnormal{Im}\big\{ {w}^{5}\big\}+\frac{24}{d^{5}}\begin{pmatrix}
%\gamma_1 b_2^4\\
%\gamma_2 b_1^4 
%\end{pmatrix}\textnormal{Im}\big\{ {w}\big\}+\frac{24}{d^{5}}\begin{pmatrix}
%\frac{\gamma_2^2}{\gamma_1} b_1^4\\
%\frac{\gamma_1^2}{\gamma_2} b_2^4 
%\end{pmatrix}+\frac{12}{d^{5}}\begin{pmatrix}
%\frac{\gamma_2^2}{\gamma_1} b_1^4\\
%\frac{\gamma_1^2}{\gamma_2} b_2^4 
%\end{pmatrix}\textnormal{Im}\big\{ {w}\big\}\notag\\ &+\frac{36}{d^{5}}(\gamma_1+\gamma_2)\begin{pmatrix}
%\frac{\gamma_2}{\gamma_1} b_1^4\\
%\frac{\gamma_1}{\gamma_2} b_2^4 
%\end{pmatrix}\textnormal{Im}\big\{ {w}^3\big\} -\frac{24}{d^{5}}\begin{pmatrix}
%\frac{\gamma_1^2}{\gamma_2} b_2^4\\
%\frac{\gamma_2^2}{\gamma_1} b_1^4 
%\end{pmatrix}\textnormal{Im}\big\{ {w}\big\}-\frac{12}{d^{5}}\begin{pmatrix}
%\frac{\gamma_2^2}{\gamma_1} b_1^4\\
%\frac{\gamma_1^2}{\gamma_2} b_2^4 
%\end{pmatrix}\textnormal{Im}\big\{ {w}\big\}\Bigg)\notag\\ &=-D_{g} F(0,g_0)^{-1}
\Bigg(&\begin{pmatrix}
\gamma_2 b_1^4\\
\gamma_1 b_2^4 
\end{pmatrix}\textnormal{Im}\big\{ {w}^{5}\big\}+\frac{6}{4}(\gamma_1+\gamma_2)\begin{pmatrix}
\frac{\gamma_2}{\gamma_1} b_1^4\\
\frac{\gamma_1}{\gamma_2} b_2^4 
\end{pmatrix}\textnormal{Im}\big\{ {w}^3\big\}\\ & +\begin{pmatrix}
\gamma_1 b_2^4-\frac{\gamma_1^2}{\gamma_2} b_2^4+\frac{\gamma_2^2}{\gamma_1} b_1^4\\
\gamma_2 b_1^4 -\frac{\gamma_2^2}{\gamma_1} b_1^4+\frac{\gamma_1^2}{\gamma_2} b_2^4 
\end{pmatrix}\textnormal{Im}\big\{ {w}\big\}.\notag\Bigg)
\end{align}
In view of \eqref{g-1} we conclude that
\begin{align}
g^{(4)}(0)= \frac{24}{d^{5}}\Bigg(&
\frac{{\gamma_1} b_2^4+{\gamma_2} b_1^4 }{2d},
 \frac{d^{2}\big(\gamma_2^4b_1^4-\gamma_1^4b_2^4\big)}{\gamma_1\gamma_2(\gamma_1+\gamma_2)^2},
\frac{1}{4}\frac{\gamma_2}{\gamma_1} b_1^4\overline{w}^4+\frac{3}{4}\frac{\gamma_2}{\gamma_1}\big(\frac{\gamma_2}{\gamma_1}+1)b_1^4\overline{w}^2,\\ &
\frac{1}{4}\frac{\gamma_1}{\gamma_2} b_2^4\overline{w}^4+\frac{3}{4}\frac{\gamma_1}{\gamma_2}\big(\frac{\gamma_1}{\gamma_2}+1)b_2^4\overline{w}^2
\Bigg).\notag
\end{align}

 \end{proof}
%%%%%%%%%%%%%%%%%%%%%%%%%%%%%%%%%%%%%%%%%%%%%%%%%%%%%%%%%%%%5
%This concludes the proof of the proposition.
%\end{proof}
%%%%%%%%%%%%%%%%%%%%%%%%%%%%%%%%%%%%%%%%%%%%%%%%%%%
\subsection{Counter-rotating pairs}  
Next we shall prove the following expansion.
\begin{proposition}\label{asymf00}
The  conformal mappings of the counter-rotating domains, $\phi_j:\mathbb{T}\to \partial D_j^\varepsilon$,   have the expansions 

\begin{align*}
\phi_j(\varepsilon,w)&=w-\Big(\frac{\varepsilon b_j }{ d}\Big)^2\overline{w}-{\frac12}\Big(\frac{\varepsilon b_j }{ d}\Big)^3\overline{w}^2-{\frac13}\Big(\frac{\varepsilon b_j }{ d}\Big)^4\overline{w}^3-{\frac14}\Big(\frac{\varepsilon b_j }{ d}\Big)^5\overline{w}^4+o(\varepsilon^5),%\quad j=1,2.
\end{align*}
for all $w\in\mathbb{T}$. Moreover,  
$$
U(\varepsilon)=\frac{\gamma_1}{2d}\Big(1+\frac{\varepsilon^4}{d^4}(2b_1^4+b_2^4)\Big)+o(\varepsilon^4)
$$
$$
\gamma_2(\varepsilon)={\gamma_1}\Big(1+\frac{\varepsilon^4}{d^4}(b_1^4-b_2^4)\Big)+o(\varepsilon^4).
$$
\end{proposition}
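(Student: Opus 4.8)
The plan is to reproduce, at the translating reference state, the Taylor-expansion-in-$\varepsilon$ scheme already carried out for the co-rotating case in Proposition~\ref{asymf0}. By Proposition~\ref{propasym} the solution branch $g(\varepsilon)=\big(U(\varepsilon),\gamma_2(\varepsilon),f_1(\varepsilon),f_2(\varepsilon)\big)$ is $C^1$ — and as smooth as needed after bootstrapping the implicit function theorem on the smooth functional $F$ of \eqref{fj2} — and satisfies $g(0)=g_0=\big(\tfrac{\gamma_1}{2d},\gamma_1,0,0\big)$ together with $F\big(\varepsilon,g(\varepsilon)\big)=0$. Differentiating this identity repeatedly at $\varepsilon=0$ and invoking the composition rule, each jet is obtained from
\[
g^{(k)}(0)=-\,D_gF(0,g_0)^{-1}R_k ,
\]
where $R_k$ is the explicit combination of the mixed derivatives $\partial_\varepsilon^a\partial_g^b F(0,g_0)$ contracted against $g'(0),\dots,g^{(k-1)}(0)$, exactly as in \eqref{varepsf3} and \eqref{varepsf41}. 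The proof thus reduces to producing the $R_k$ for $k\le4$ and inverting $D_gF(0,g_0)$.

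Two preparatory ingredients are needed. First, the pure $\varepsilon$-derivatives $\partial_\varepsilon^n F(0,g_0)$: at $(\varepsilon,f_1,f_2)=(0,0,0)$ only the nonlocal interaction $F_{3j}$ survives, and the Taylor expansion of its kernel, as in \eqref{taylorexpanf3}, produces a clean series whose $n$-th term carries the harmonic $\textnormal{Im}\{w^{n+1}\}$. Second, the inverse of the linearised operator of Proposition~\ref{prop12}: using $2U_0 d=\gamma_1$, the map sends $k(w)=\sum_{n\ge0}(A_n,B_n)^{\top}\textnormal{Im}\{w^{n+1}\}$ to
\[
D_gF(0,g_0)^{-1}k=\Big(\tfrac{B_0}{2},\;B_0-A_0,\;-\!\sum_{n\ge1}\tfrac{A_n}{n\gamma_1}\overline{w}^{\,n},\;-\!\sum_{n\ge1}\tfrac{B_n}{n\gamma_1}\overline{w}^{\,n}\Big).
\]
The decisive structural point is that the two scalar corrections $U$ and $\gamma_2$ are read off \emph{only} from the first harmonic $(A_0,B_0)$ of the residual, while the shape corrections $f_j$ come from the higher harmonics; this forces $U$ and $\gamma_2$ to stay frozen until the order at which an $\textnormal{Im}\{w\}$ term is first generated.

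The computation then runs parallel to the co-rotating one, the two features to track being the opposite sign of $F_{3j}$ and the fact that the second scalar unknown is now the circulation $\gamma_2$ (which enters the self-interaction $F_{22}$) rather than the centre $Z$; since $\gamma_2|_{\varepsilon=0}=\gamma_1$, these conspire so that the $f_j$ reproduce the co-rotating expansion with the weight parameter set to $-1$ (the value recorded for the translating case). Concretely, at orders $1,2,3$ the analogues of \eqref{ffeps1}, \eqref{dngfj} and \eqref{varepsgg3} show that $R_1,R_2,R_3$ carry no $\textnormal{Im}\{w\}$ harmonic, so $U$ and $\gamma_2$ stay equal to $\tfrac{\gamma_1}{2d}$ and $\gamma_1$, while the inverse yields $f_j'(0)=-\tfrac{b_j}{d^2}\overline{w}$, $f_j''(0)=-\tfrac{b_j^2}{d^3}\overline{w}^{2}$ and a third-order jet supported only on $\overline{w}^{3}$ (the $\overline{w}$ contribution that appears in \eqref{pareps2ff} being absent here). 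Assembling $\phi_j=w+\varepsilon b_j f_j$ recovers the stated expansion through $\varepsilon^4$. The substantive step is order $4$: one builds $R_4$ from $\partial_\varepsilon^4F$, $\partial^4_{\varepsilon\varepsilon\varepsilon g}F\,g'(0)$, $\partial^3_{\varepsilon\varepsilon g}F\,g''(0)$, $\partial^4_{\varepsilon\varepsilon gg}F[g'(0),g'(0)]$ and $\partial^3_{\varepsilon gg}F[g''(0),g'(0)]$ exactly as in \eqref{varepsf41}, reads off its $\textnormal{Im}\{w\}$ coefficient, and applies the inverse to extract $U^{(4)}(0)$ and $\gamma_2^{(4)}(0)$, yielding $U(\varepsilon)=\tfrac{\gamma_1}{2d}\big(1+\tfrac{\varepsilon^4}{d^4}(2b_1^4+b_2^4)\big)$ and $\gamma_2(\varepsilon)=\gamma_1\big(1+\tfrac{\varepsilon^4}{d^4}(b_1^4-b_2^4)\big)$; the higher harmonic of the same $R_4$ simultaneously produces the $\overline{w}^{4}$ term of $f_j$.

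I expect the main obstacle to be the bookkeeping of the fourth-order mixed derivatives appearing in expansions of the type \eqref{verepsepsf}, and the systematic evaluation of the resulting contour integrals by residues, using $\overline{\tau}-\overline{w}=-(\tau-w)\overline{w}\,\overline{\tau}$ on $\mathbb{T}$ as in the co-rotating computation. One must keep careful track of which harmonic each term lands in and, in contrast with the co-rotating case, correctly disentangle the two scalar corrections — the speed $U$ and the circulation $\gamma_2$ — from the single available $\textnormal{Im}\{w\}$ harmonic through the specific $(A_0,B_0)\mapsto(\tfrac{B_0}{2},\,B_0-A_0)$ structure of the inverse. The final consistency check is that all contributions carrying the translating weight cancel, leaving the clean geometric-series form $-\tfrac1n(\varepsilon b_j/d)^{n+1}\overline{w}^{\,n}$ of $\phi_j$.
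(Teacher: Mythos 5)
Your proposal follows essentially the same route as the paper's proof: repeated implicit differentiation of $F(\varepsilon,g(\varepsilon))=0$ at $\varepsilon=0$, the explicit Taylor expansion of the interaction kernel for the pure $\varepsilon$-derivatives, the explicit inverse of $D_gF(0,g_0)$ acting harmonic by harmonic, and the structural observation that only the $\textnormal{Im}\{w\}$ component feeds the scalar unknowns $(U,\gamma_2)$, which first happens at order four. One slip: your inverse gives the second component as $B_0-A_0$, whereas it should be $d(B_0-A_0)$, because the correct linearization carries $\partial_{\gamma_2}F_1(0,g_0)=-\frac1d\,\textnormal{Im}\{w\}$, the factor $\frac1d$ coming from the interaction term $-\frac{\gamma_2}{d}w$ in $F_1$ (the display in Proposition \ref{prop12} omits this factor, but the paper's formula \eqref{g-1counter} and the computation in its proof restore it). Carried through literally, your version would yield $\gamma_2(\varepsilon)=\gamma_1\big(1+\frac{\varepsilon^4}{d^5}(b_1^4-b_2^4)\big)$, off by one power of $d$ from the stated (and dimensionally consistent) expansion; with this factor corrected, your outline matches the paper's computation step for step.
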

\begin{proof} 
Recall that
\begin{equation*}
\phi_j(\varepsilon,w)=w+\varepsilon b_j f_j(\varepsilon,w), \quad j=1,2.
\end{equation*}
Moreover, in view of Proposition \ref{propasym} one has
\begin{equation}\label{gf1f2counter}
F\Big(\varepsilon, g(\varepsilon)\Big)\triangleq  F\Big(\varepsilon, U(\varepsilon),\gamma_2(\varepsilon),f_1(\varepsilon), f_2(\varepsilon)\Big)=0
\end{equation}
and
\begin{equation}\label{f0}
g(0)=\Big(U(0),\gamma_2(0),f_1(0), f_2(0)\Big)=\Big(\frac{\gamma_1}{2d},\gamma_1,0,0\Big).
\end{equation}
where $F$ is defined by \eqref{fj2}.
%\begin{align}\label{fjappcounter}
%F_j(\varepsilon,g)=\textnormal{Im}\Bigg\{&2Uw\Big(1+\varepsilon b_j f'_j(w)\Big)-{\gamma_j} f'_j(w)
%\\&+\varepsilon b_j {\gamma_j} w\Big(1+\varepsilon b_j f'_j(w)\Big)\fint_{\mathbb{T}}\frac{A\overline{B}_j-\overline{A}B_j}{A(A+\varepsilon b_j B_j)}\Big[f'_j(\tau)-\frac{B_j}{A}\Big]d\tau\notag\\ &+\gamma_{3-j}w\Big(1+\varepsilon  b_jf'_j(w)\Big)\fint_{\mathbb{T}}\frac{\big(\overline{\tau}+\varepsilon b_{3-j} \overline{ f_{3-j}(\tau)}\big)\big(1+\varepsilon b_{3-j} f'_{3-j}(\tau)\big)}{\varepsilon C_j+\varepsilon^2D_j- d}d\tau \Bigg\}.\notag%,\quad j=1,2\notag
%\end{align}
%\begin{equation}\label{f0}
%f_j(0,w)=0
%\end{equation}
%and
%$$
%G\big(\varepsilon,f_1(\varepsilon),f_2(\varepsilon)\big)%\triangleq F\Big(\varepsilon,\Omega\big(\varepsilon,f_1((\varepsilon),f_2(\varepsilon)\big),Z\big(\varepsilon,f_1(\varepsilon),f_2(\varepsilon)\big),f_1(\varepsilon),f_2(\varepsilon)\Big)
%=0.
%$$
%In what follows we denote by 
%$$
%g\triangleq (\Omega,Z,f_1,f_2)\quad \textnormal{and}\quad g_0\triangleq (0,\Omega_0,Z_0, 0,0).
%$$ 
By the composition rule we get
\begin{equation}\label{1order0counter}
\partial_\varepsilon g(0)  =-D_{g} F(0,g_0)^{-1}\partial_\varepsilon F(0,g_0)\cdot
\end{equation}
In view of \eqref{fj2} one has
\begin{align*}
{F}_j\big(\varepsilon,g_0\big)&= \gamma_1\,\textnormal{Im}\bigg\{ \Big(\frac{1}{d}+  \displaystyle\fint_{\mathbb{T}}\frac{\overline{\tau}}{\varepsilon \big(b_{3-j}\tau+b_j w\big)- d}d\tau\Big)w\bigg\}\\ &= \gamma_1\,\textnormal{Im}\bigg\{ \Big(\frac{1}{d}-\sum_{n=0}^\infty\frac{\varepsilon^{n}}{d^{n+1}} \displaystyle\fint_{\mathbb{T}}{\overline{\tau}} \big(b_{3-j}\tau+b_j w\big)^n d\tau \Big)w\bigg\} \\ &=-\frac{\gamma_1}{d}\sum_{n=1}^\infty\frac{\varepsilon^{n}b_j^n}{d^{n}}  \textnormal{Im}\big\{ w^{n+1}\big\}\cdot
\end{align*}
Thus
\begin{align}\label{pepsiloncounter}
\partial_\varepsilon^n F(0,g_0)(w)&=-\frac{n!\gamma_1}{d^{n+1}}\begin{pmatrix}
 b_1^n\\
 b_2^n 
\end{pmatrix}\textnormal{Im}\big\{ {w}^{n+1}\big\}\cdot
\end{align}
%Now, we shall write down the explicit  expression of $D_{g}F(0,g_0)^{-1}$. 
 Recall from Proposition  \ref{prop12} that
for all $h=(\alpha_1,\alpha_2,h_1,h_2)\in \mathbb{R}\times\mathbb{R}\times X$
with
$$
h_j(w)=\sum_{n\geq 1}a_n^j\overline{w}^{n},\quad j=1,2,
$$
 one has
\begin{align}
%\label{dfomzf}
D_{g}F &(0,g_0)h(w)={\alpha_1}\begin{pmatrix}
2  \\
2
\end{pmatrix}\textnormal{Im}\big\{{w}\big\}-\dfrac{\alpha_2}{d}\begin{pmatrix}
 1 \\
0
\end{pmatrix}\textnormal{Im}\big\{{w}\big\}-\gamma_1\sum_{n\geq 1}n\begin{pmatrix}
a_n^1\\
a_n^2
\end{pmatrix}\textnormal{Im}\big\{{w}^{n+1}\big\}.
\end{align}
Then, for any $k\in {Y}$ with the expansion
$$
k(w)=\sum_{n\geq 0}\begin{pmatrix}
A_n \\
B_n 
\end{pmatrix}\textnormal{Im}\{{w}^{n+1}\},
$$
one has
\begin{align}\label{g-1counter}
D_{g}F(0,g_0)^{-1}k(w)&=-\bigg(-
\frac{B_0}{2},
d(A_0-B_0),
\displaystyle\sum_{n\geq 1}\dfrac{A_n}{\gamma_1n}\overline{w}^{n},
\displaystyle\sum_{n\geq 1}\dfrac{B_n}{\gamma_1n}  \overline{w}^{n}
\bigg)\cdot
\end{align}
Combining the last identity with \eqref{pepsiloncounter}, \eqref{1order0counter} and \eqref{g-1counter} we get
\begin{align}\label{parepsfcounter}
\partial_\varepsilon g(0,w) &=-\frac{1}{d^2}\bigg(
0,0,
b_1\overline{w},
b_2 \overline{w}
\bigg)\cdot
\end{align}
Let's move to the second order derivative in $\varepsilon$  at $0$. By the composition rule we get 
\begin{align}\label{vareps2fcounter}
\partial^2_{\varepsilon\varepsilon} g(0) &=-D_{g} F(0,g_0)^{-1}\Big(\partial^2_{\varepsilon} F(0,g_0)+2D^2_{\varepsilon g} F(0,g_0)g'(0)+D^2_{gg} F(0,g_0)[g'(0),g'(0)]\Big).
\end{align}
Observe  that
\begin{align}\label{dngfjncounter}
{F}_j(0,g)&=\textnormal{Im}\bigg\{2U w-\gamma_jf'_j(w)-\frac{\gamma_{3-j}}{d} w\bigg\}.
\end{align}
Thus, for all $h=(\alpha_1,\alpha_2,h_1,h_2), k=(\beta_1,\beta_2,k_1,k_2)\in \mathbb{R}\times \mathbb{R}\times X$, one has
\begin{equation}\label{dngfj0counter}
\partial^2_{gg} F_1(0,g)\big[h,k\big]=0,\quad \partial^2_{gg} F_2(0,g)\big[h,k\big]=-\alpha_2\textnormal{Im}\bigg\{k'_2(w)\bigg\}-\beta_2\textnormal{Im}\bigg\{h'_2(w)\bigg\}\cdot
\end{equation}
From the identity \eqref{parepsfcounter} we get
\begin{equation}\label{dngfjcounter}
\partial^2_{gg} F_j(0,g_0)\big[g'(0),g'(0)\big]=0\cdot
\end{equation}
Next, differentiating the identity \eqref{fj2} with respect to $\varepsilon$ we get
\begin{align}\label{varepsfjcounter}
&\partial_\varepsilon  F_j(\varepsilon,g)=\textnormal{Im}\Bigg\{2Uw b_j f'_j(w)
+ b_j {\gamma_j} w\Big(1+2\varepsilon b_j f'_j(w)\Big)\fint_{\mathbb{T}}\frac{A\overline{B}_j-\overline{A}B_j}{A(A+\varepsilon b_j B_j)}\Big[f'_j(\tau)-\frac{B_j}{A}\Big]d\tau
\\&-\varepsilon b_j^2 {\gamma_j} w\Big(1+\varepsilon b_j f'_j(w)\Big)\fint_{\mathbb{T}}\frac{B_j(A\overline{B}_j-\overline{A}B_j)}{A(A+\varepsilon b_j B_j)^2}\Big[f'_j(\tau)-\frac{B_j}{A}\Big]d\tau
\notag\\ &+\gamma_{3-j}b_j w  f'_j(w)\fint_{\mathbb{T}}\frac{\big(\overline{\tau}+\varepsilon b_{3-j} \overline{ f_{3-j}(\tau)}\big)\big(1+\varepsilon b_{3-j} f'_{3-j}(\tau)\big)}{\varepsilon C_j+\varepsilon^2D_j- d}d\tau 
\notag\\ &
+\gamma_{3-j} b_{3-j} w\Big(1+\varepsilon  b_jf'_j(w)\Big)\fint_{\mathbb{T}}\frac{\overline{ f_{3-j}(\tau)}\big(1+\varepsilon b_{3-j} f'_{3-j}(\tau)\big)+\big(\overline{\tau}+\varepsilon b_{3-j} \overline{ f_{3-j}(\tau)}\big) f'_{3-j}(\tau)}{\varepsilon C_j+\varepsilon^2D_j- d}d\tau
\notag\\&
%+\gamma_{3-j} b_{3-j} w\Big(1+\varepsilon  b_jf'_j(w)\Big)\fint_{\mathbb{T}}\frac{\big(\overline{\tau}+\varepsilon b_{3-j} \overline{ f_{3-j}(\tau)}\big) f'_{3-j}(\tau)}{\varepsilon C_j+\varepsilon^2D_j- d}d\tau
%\notag\\&
 -\gamma_{3-j}w\Big(1+\varepsilon  b_jf'_j(w)\Big)\fint_{\mathbb{T}}\frac{\big(C_j+2\varepsilon D_j\big)\big(\overline{\tau}+\varepsilon b_{3-j} \overline{ f_{3-j}(\tau)}\big)\big(1+\varepsilon b_{3-j} f'_{3-j}(\tau)\big)}{\big(\varepsilon C_j+\varepsilon^2D_j- d\big)^2}d\tau \Bigg\},\notag%,\quad j=1,2
\end{align}
where we have used the notation 
$$
A=\tau-w,\quad B_j=f_j(\tau)-f_j(w),\quad C_j=\big(b_{3-j}\tau+b_j w\big),\quad D_j= b_{3-j}^2 f_{3-j}(\tau)+b_j^2 f_j(w).
$$
%For $\varepsilon=0$ we have
%\begin{align}\label{vareps0fjcounter}
%\partial_\varepsilon F_j(0,g)=\textnormal{Im}\Bigg\{&2Uw b_j f'_j(w)
%+ b_j {\gamma_j} w\fint_{\mathbb{T}}\frac{A\overline{B}_j-\overline{A}B_j}{A^2}\Big[f'_j(\tau)-\frac{B_j}{A}\Big]d\tau
%\\&-\frac{\gamma_{3-j}b_j}{d}w  f'_j(w)-\frac{\gamma_{3-j}b_j}{d^2}w^2 \Bigg\}.\notag
%\end{align}
Consequently, for all $h=(\alpha_1,\alpha_2,h_1,h_2)\in \mathbb{R}\times \mathbb{R}\times X$, one finds 
\begin{align}\label{ffeps10counter}
\partial^2_{\varepsilon g} F_1(0,g)h=&\alpha_1\textnormal{Im}\Big\{2w b_j f'_j(w)\Big\}-\frac{\alpha_2b_j\delta_{1j}}{d}\textnormal{Im}\Bigg\{w  f'_j(w)-\frac{1}{d}w^2 \Bigg\}\\ &+\alpha_2\delta_{2j}\textnormal{Im}\Bigg\{ b_j  w\fint_{\mathbb{T}}\frac{A\overline{B}_j-\overline{A}B_j}{A^2}\Big[f'_j(\tau)-\frac{B_j}{A}\Big]d\tau \Bigg\}\notag \\ &+\textnormal{Im}\Bigg\{2Uw b_j h'_j(w)-\frac{\gamma_{3-j}b_j}{d}w  h'_j(w)\notag
\\&+ b_j {\gamma_j} w\fint_{\mathbb{T}}\frac{2i\textnormal{Im}\{A(\overline{f_j(\tau)}-\overline{f_j(w)})\}}{A^2}\Big[h'_j(\tau)-\frac{h_j(\tau)-h_j(w)}{A}\Big]d\tau\notag
\\&+ b_j {\gamma_j} w\fint_{\mathbb{T}}\frac{2i\textnormal{Im}\{A(\overline{h_j(\tau)}-\overline{h_j(w)})\}}{A^2}\Big[f'_j(\tau)-\frac{f_j(\tau)-f_j(w)}{A}\Big]d\tau
\Bigg\},\notag
\end{align}
where $\delta_{ij}$ is the Kronecker delta. 
Then, for $g=g_0\triangleq\Big(\frac{\gamma_1}{2d},\gamma_1,0,0\Big)$ and $h=g'(0)$ one gets
\begin{align}\label{ffeps1counter}
\partial^2_{\varepsilon g} F_j(0,g_0)g'(0)=0.%,\quad j=1,2
\end{align}
Plugging the  identities \eqref{pepsiloncounter}, \eqref{g-1counter}, \eqref{dngfjcounter} and  \eqref{ffeps1counter}  into \eqref{vareps2fcounter} yields
\begin{align}\label{pareps2fcounter}
g''(0)
 &=-\frac{1}{d^3}\bigg(0, 0,
{b_1^2} \overline{w}^2,
{b_2^2} \overline{w}^2
\bigg) \cdot
\end{align}
%%%%%%%%%%%%%%%%%%%%%%%%%%%%%%%%%%%%%%%%%%%%%%%%
Next, we move to the third order derivative
\begin{align}\label{varepsf3counter}
\partial^3_{\varepsilon\varepsilon\varepsilon}g(0)&=-D_{g} F(0,g_0)^{-1}\Big({\partial^3_{\varepsilon\varepsilon\varepsilon}} F(0,g_0)+3\partial^3_{\varepsilon \varepsilon g} F(0,g_0)g'(0)+3\partial^3_{\varepsilon gg} F(0,g_0)\big[g'(0),g'(0)\big]
\\&+3\partial^2_{\varepsilon g} F(0,g_0)g''(0)+3\partial^2_{ gg} F(0,g_0)\big[g'(0),g''(0)\big]+\partial^2_{ggg} F(0,g_0)\big[g'(0),g'(0),g'(0)\big]\Big).\notag
\end{align}
By virtue of  the identity \eqref{ffeps10counter}, for all $h=(\alpha_1,\alpha_2,h_1,h_2),k=(\beta_1,\beta_2,k_1,k_2)\in \mathbb{R}\times \mathbb{R}\times X$, one has 
\begin{align}\label{varepsggcounter}
\partial^3_{\varepsilon g g} F_j(0,g)&[h,k]=\alpha_1\textnormal{Im}\Big\{2w b_j k'_j(w)\Big\}+\beta_1\textnormal{Im}\Big\{2w b_j h'_j(w)\Big\}\\ &-\frac{\alpha_{2}b_j\delta_{1j}}{d}\textnormal{Im}\Big\{w  k'_j(w) \Big\}-\frac{\beta_{2}b_j\delta_{1j}}{d}\textnormal{Im}\Big\{w  h'_j(w) \Big\} \notag
\\&+(\alpha_2+\beta_2)\delta_{2j}b_j\textnormal{Im}\Bigg\{ w\fint_{\mathbb{T}}\frac{2i\textnormal{Im}\{A(\overline{f_j(\tau)}-\overline{f_j(w)})\}}{A^2}\Big[h'_j(\tau)-\frac{h_j(\tau)-h_j(w)}{A}\Big]d\tau\notag
\\&+  w\fint_{\mathbb{T}}\frac{2i\textnormal{Im}\{A(\overline{h_j(\tau)}-\overline{h_j(w)})\}}{A^2}\Big[f'_j(\tau)-\frac{f_j(\tau)-f_j(w)}{A}\Big]d\tau\Bigg\}\notag
\\ &+ b_j {\gamma_j}\textnormal{Im}\Bigg\{ w\fint_{\mathbb{T}}\frac{2i\textnormal{Im}\{A(\overline{k_j(\tau)}-\overline{k_j(w)})\}}{A^2}\Big[h'_j(\tau)-\frac{h_j(\tau)-h_j(w)}{A}\Big]d\tau\notag
\\&+  w\fint_{\mathbb{T}}\frac{2i\textnormal{Im}\{A(\overline{h_j(\tau)}-\overline{h_j(w)})\}}{A^2}\Big[k'_j(\tau)-\frac{k_j(\tau)-k_j(w)}{A}\Big]d\tau
\Bigg\}.\notag
\end{align}

Thus, for $g=g_0\triangleq \Big(\frac{\gamma_1}{2d},\gamma_1,0,0\Big)$, $h=g'(0)$ and $k=g'(0)$ we find
%\begin{align*}
%\partial^3_{\varepsilon g g} F_j(0,g_0)[h,k]&=-(\alpha_1+\beta_1)\frac{2\gamma_{3-j}b_j }{\gamma_1+\gamma_2}\textnormal{Im}\Big\{w h'_j(w)\Big\}+(\alpha_2+\beta_2)(-1)^jb_j\frac{\gamma_1+\gamma_2}{d^2}\textnormal{Im}\Big\{ w  h'_j(w)
%\Big\}\\ & +\textnormal{Im}\Bigg\{ b_j {\gamma_j} w\fint_{\mathbb{T}}\frac{2i\textnormal{Im}\{A(\overline{k_j(\tau)}-\overline{k_j(w)})\}}{A^2}\Big[h'_j(\tau)-\frac{h_j(\tau)-h_j(w)}{A}\Big]d\tau
%\\&+ b_j {\gamma_j} w\fint_{\mathbb{T}}\frac{2i\textnormal{Im}\{A(\overline{h_j(\tau)}-\overline{h_j(w)})\}}{A^2}\Big[k'_j(\tau)-\frac{k_j(\tau)-k_j(w)}{A}\Big]d\tau \Bigg\}.%,\quad j=1,2
%\end{align*}

\begin{align}\label{varepsgg3counter}
&\partial^3_{\varepsilon g g} F_j(0,g_0)[g'(0),g'(0)]=%\\ &=%4Udb_j \textnormal{Im}\Bigg\{  w\fint_{\mathbb{T}}\frac{2i\textnormal{Im}\{A(\overline{\partial_\varepsilon f_j(0,\tau)}-\overline{\partial_\varepsilon f_j((w)})\}}{(\tau-w)^2}\Big[\partial^2_{\varepsilon\tau} f_j(0,\tau)-\frac{\partial_\varepsilon f_j(0,\tau)-\partial_\varepsilon f_j(0,w)}{(\tau-w)}\Big]d\tau\Bigg\}
%\notag\\ &=
2\gamma_1b_j \textnormal{Im}\Bigg\{  -w\frac{b_j^2}{d^4}\fint_{\mathbb{T}}\Big(1-\frac{(\overline{\tau}-\overline{w})^2}{(\tau-w)^2}\Big)\Big(\overline{\tau}^2-\frac{\overline{\tau}-\overline{w}}{\tau-w}\Big)d\tau\Bigg\}=0.
%\notag\\ &=2b_j {\gamma_j}\textnormal{Im}\Bigg\{  -\frac{b_j^2}{d^4} \Bigg\}\notag\\ &=0.\notag
\end{align}
Next, differentiating \eqref{varepsfjcounter} with respect to $\varepsilon$ gives
\begin{align}\label{verepsepsfcounter22}
&\partial^2_{\varepsilon\varepsilon} F_j(\varepsilon,g)=\textnormal{Im}\Bigg\{ 2b_j^2 {\gamma_j} w f'_j(w)\fint_{\mathbb{T}}\frac{A\overline{B}_j-\overline{A}B_j}{A(A+\varepsilon b_j B_j)}\Big[f'_j(\tau)-\frac{B_j}{A}\Big]d\tau
\\&-2 b_j^2 {\gamma_j} w\Big(1+2\varepsilon b_j f'_j(w)\Big)\fint_{\mathbb{T}}\frac{B_j(A\overline{B}_j-\overline{A}B_j)}{A(A+\varepsilon b_j B_j)^2}\Big[f'_j(\tau)-\frac{B_j}{A}\Big]d\tau\notag
\notag\\&+2\varepsilon b_j^3 {\gamma_j} w\Big(1+\varepsilon b_j f'_j(w)\Big)\fint_{\mathbb{T}}\frac{B_j^2(A\overline{B}_j-\overline{A}B_j)}{A(A+\varepsilon b_j B_j)^3}\Big[f'_j(\tau)-\frac{B_j}{A}\Big]d\tau
\notag\\ &+2\gamma_{3-j}  b_j b_{3-j} w f'_j(w)\fint_{\mathbb{T}}\frac{\overline{ f_{3-j}(\tau)}\big(1+\varepsilon b_{3-j} f'_{3-j}(\tau)\big)+\big(\overline{\tau}+\varepsilon b_{3-j} \overline{ f_{3-j}(\tau)}\big)  f'_{3-j}(\tau)}{\varepsilon C_j+\varepsilon^2D_j- d}d\tau
\notag\\ &-2\gamma_{3-j}w  b_jf'_j(w)\fint_{\mathbb{T}}\frac{\big(C_j+2\varepsilon D_j\big)\big(\overline{\tau}+\varepsilon b_{3-j} \overline{ f_{3-j}(\tau)}\big)\big(1+\varepsilon b_{3-j} f'_{3-j}(\tau)\big)}{\big(\varepsilon C_j+\varepsilon^2D_j- d\big)^2}d\tau
\notag\\ &+2b_{3-j}^2\gamma_{3-j}w\Big(1+\varepsilon  b_jf'_j(w)\Big)\fint_{\mathbb{T}}\frac{   f'_{3-j}(\tau)\overline{ f_{3-j}(\tau)}}{\varepsilon C_j+\varepsilon^2D_j- d}d\tau
\notag\\ &-2\gamma_{3-j}b_{3-j} w\Big(1+\varepsilon  b_jf'_j(w)\Big)\fint_{\mathbb{T}}\frac{ \big(C_j+2\varepsilon D_j\big)\overline{ f_{3-j}(\tau)}\big(1+\varepsilon b_{3-j} f'_{3-j}(\tau)\big)}{(\varepsilon C_j+\varepsilon^2D_j- d)^2}d\tau
\notag\\ &-2\gamma_{3-j}b_{3-j} w\Big(1+\varepsilon  b_jf'_j(w)\Big)\fint_{\mathbb{T}}\frac{ \big(C_j+2\varepsilon D_j\big)\big(\overline{\tau}+\varepsilon b_{3-j} \overline{ f_{3-j}(\tau)}\big)  f'_{3-j}(\tau)}{(\varepsilon C_j+\varepsilon^2D_j- d)^2}d\tau
\notag\\& -2\gamma_{3-j}w\Big(1+\varepsilon  b_jf'_j(w)\Big)\fint_{\mathbb{T}}\frac{ D_j\big(\overline{\tau}+\varepsilon b_{3-j} \overline{ f_{3-j}(\tau)}\big)\big(1+\varepsilon b_{3-j} f'_{3-j}(\tau)\big)}{\big(\varepsilon C_j+\varepsilon^2D_j- d\big)^2}d\tau
\notag\\& +2\gamma_{3-j}w\Big(1+\varepsilon  b_jf'_j(w)\Big)\fint_{\mathbb{T}}\frac{\big(C_j+2\varepsilon D_j\big)^2\big(\overline{\tau}+\varepsilon b_{3-j} \overline{ f_{3-j}(\tau)}\big)\big(1+\varepsilon b_{3-j} f'_{3-j}(\tau)\big)}{\big(\varepsilon C_j+\varepsilon^2D_j- d\big)^3}d\tau \Bigg\},\notag
\end{align}
with $$
A=\tau-w,\quad B_j=f_j(\tau)-f_j(w),\quad C_j=\big(b_{3-j}\tau+b_j w\big),\quad D_j= b_{3-j}^2 f_{3-j}(\tau)+b_j^2 f_j(w).
$$
For $\varepsilon=0$ one has
\begin{align*}
&\partial^2_{\varepsilon\varepsilon} F_j(0,g)=2\textnormal{Im}\Bigg\{ b_j^2 {\gamma_j} w \fint_{\mathbb{T}}\frac{A\overline{B}_j-\overline{A}B_j}{A^2}\Big[f'_j(\tau)-\frac{B_j}{A}\Big]\Big[f'_j(w)-\frac{B_j}{A}\Big]d\tau
%\\&- b_j^2 {\gamma_j} w\fint_{\mathbb{T}}\frac{B_j(A\overline{B}_j-\overline{A}B_j)}{A^3}\Big[f'_j(\tau)-\frac{B_j}{A}\Big]d\tau\notag
\\ &-\frac{1}{d^2}\gamma_{3-j}w^2  b_j^2f'_j(w)
-\frac{1}{d}b_{3-j}^2\gamma_{3-j}w\fint_{\mathbb{T}}{   f'_{3-j}(\tau)\overline{ f_{3-j}(\tau)}}d\tau
 -\frac{1}{d^2}\gamma_{3-j}b_j^2wf_j(w)
 -\frac{1}{d^3}\gamma_{3-j}b_j^2w^3 \Bigg\}.%,\quad j=1,2
\end{align*}
Thus, for all $h=(\alpha_1,\alpha_2,h_1,h_2)\in \mathbb{R}\times \mathbb{R}\times X$ one finds
\begin{align}\label{epsepsgcounter}
\partial^3_{\varepsilon\varepsilon g} &F_j(0,g)h=2\alpha_2\delta_{2j}\textnormal{Im}\Bigg\{ b_j^2  w \fint_{\mathbb{T}}\frac{A\overline{B}_j-\overline{A}B_j}{A^2}\Big[f'_j(\tau)-\frac{B_j}{A}\Big]\Big[f'_j(\tau)-\frac{B_j}{A}\Big]d\tau\Bigg\}
\\&\notag
-2\alpha_{2}\delta_{1j}\textnormal{Im}\Bigg\{ \frac{1}{d^2}w^2  b_j^2f'_j(w)\notag
+\frac{1}{d}b_{3-j}^2w\fint_{\mathbb{T}}{   f'_{3-j}(\tau)\overline{ f_{3-j}(\tau)}}d\tau
 +\frac{1}{d^2}b_j^2wf_j(w)
 +\frac{1}{d^3}b_j^2w^3 \Bigg\}
 \notag\\ &+2\textnormal{Im}\Bigg\{-\frac{1}{d^2}\gamma_{3-j}w^2  b_j^2h'_j(w)
 -\frac{1}{d^2}\gamma_{3-j}b_j^2wh_j(w)\notag
\\&+ b_j^2 {\gamma_j} w \fint_{\mathbb{T}}\Big[h'_j(w)-\frac{h_j(\tau)-h_j(w)}{A}\Big]\frac{A\overline{B}_j-\overline{A}B_j}{A^2}\Big[f'_j(\tau)-\frac{B_j}{A}\Big]d\tau\notag
\\&+ b_j^2 {\gamma_j} w \fint_{\mathbb{T}}\Big[f'_j(w)-\frac{B_j}{A}\Big]\frac{2i\textnormal{Im}\{A(\overline{h_j(\tau)}-\overline{h_j(w)})\}}{A^2}\Big[f'_j(\tau)-\frac{B_j}{A}\Big]d\tau\notag
\\&+ b_j^2 {\gamma_j} w \fint_{\mathbb{T}}\Big[f'_j(w)-\frac{B_j}{A}\Big]\frac{A\overline{B}_j-\overline{A}B_j}{A^2}\Big[h'_j(\tau)-\frac{h(\tau)-h_j(w)}{A}\Big]d\tau\notag
 \Bigg\}\\ &-\frac{2}{d}b_{3-j}^2\gamma_{3-j}\textnormal{Im}\Bigg\{w\fint_{\mathbb{T}}{   f'_{3-j}(\tau)\overline{ h_{3-j}(\tau)}}d\tau+w\fint_{\mathbb{T}}{   h'_{3-j}(\tau)\overline{ f_{3-j}(\tau)}}d\tau
 \Bigg\}.\notag
\end{align}
Therefore, for $g=g_0\triangleq\Big(\frac{\gamma_1}{2d},\gamma_1,0,0\Big)$, $h=g'(0)$ we get
%\begin{align*}
%\partial^3_{\varepsilon\varepsilon g} F_j(0,g_0)h&=\frac{2b_j^2}{d^2}\textnormal{Im}\Bigg\{(\gamma_1+\gamma_2)\Big(w\overline{h_j(w)}+ h'_j(w)\Big)+\gamma_{3-j}w^2  h'_j(w)
% +\gamma_{3-j}wh_j(w)
% \Bigg\}.%,\quad j=1,2
%\end{align*}
% Thus,
\begin{align}\label{varepsepsg1dervcounter}
\partial^3_{\varepsilon\varepsilon g} F_j(0,g_0)g'(0)&=0.
\end{align}
Next, in view of \eqref{dngfj0counter}  one has
\begin{equation}\label{dngfjn2counter}
\partial^3_{gg} F(0,g_0)\big[g'(0),g''(0)\big]=0
\end{equation}
and 
\begin{equation}\label{dngfjncounter}
\partial^3_{ggg} F(0,g_0)\big[h,k,l\big]=0\quad \textnormal{for all }\quad h,k,l\in  \mathbb{R}\times \mathbb{R}\times X.
\end{equation}
Putting together the identities \eqref{pepsiloncounter}, \eqref{varepsgg3counter}, \eqref{varepsepsg1dervcounter}, \eqref{dngfjn2counter} \eqref{dngfjncounter}  and \eqref{varepsf3counter} we conclude that
\begin{align}\label{pareps2ffcounter}
\partial^3_{\varepsilon\varepsilon\varepsilon} g(0,w)
 &=-\frac{2}{d^4}\bigg(0, 0,
b_1^3\overline{w}^3,
b_2^3\overline{w}^3 
\bigg)
\cdot
\end{align}
Now we move to the fourth order derivative in $\varepsilon$ of $g$. By the composition rule we get
\begin{align*}
\partial^4_{\varepsilon\varepsilon\varepsilon\varepsilon}g(0)=&-D_{g} F(0,g_0)^{-1}\Big({\partial^4_{\varepsilon\varepsilon\varepsilon\varepsilon}} F(0,g_0)+4\partial^4_{\varepsilon\varepsilon \varepsilon g} F(0,g_0)g'(0)
+6\partial^3_{\varepsilon\varepsilon g} F(0,g_0)g^{(2)}(0)\\ &+4\partial_{\varepsilon g} F(0,g_0)g^{(3)}(0) +6\partial^4_{\varepsilon \varepsilon g g} F(0,g_0)\big[g'(0),g'(0)\big]+12\partial^3_{\varepsilon g g} F(0,g_0)\big[g''(0),g'(0)\big]\notag\\ &+4\partial^4_{\varepsilon g g g} F(0,g_0)\big[g'(0),g'(0),g'(0)\big]\notag
+3\partial^2_{ gg} F(0,g_0)\big[g''(0),g''(0)\big]\\&+4\partial^2_{ gg} F(0,g_0)\big[g^{(3)}(0),g'(0)\big]+6\partial^3_{ ggg} F(0,g_0)\big[g'(0),g''(0),g'(0)\big]\notag\\ &+\partial^4_{gggg} F(0,g_0)\big[g'(0),g'(0),g'(0),g'(0)\big]\Big).\notag
\end{align*}
In view of \eqref{dngfj0counter} one has
\begin{equation}\label{pargg0counter}
\partial^2_{ gg} F(0,g_0)\big[g^{(3)}(0),g'(0)\big]=\partial^2_{ gg} F(0,g_0)\big[g''(0),g''(0)\big]=0
\end{equation}
and 
\begin{equation}\label{parggg0counter}
\partial^4_{gggg} F(0,g_0)\big[g'(0),g'(0),g'(0),g'(0)\big]=\partial^3_{ ggg} F(0,g_0)\big[g'(0),g''(0),g'(0)\big]=0.
\end{equation}
Moreover, from \eqref{ffeps10counter} we find
\begin{equation}\label{parggg0counter}
\partial_{\varepsilon g} F(0,g_0)g^{(3)}(0) =0
\end{equation}
and by \eqref{varepsggcounter} we obtain
%\begin{align*}
%\partial^3_{\varepsilon g g g} F_j(0,g)[h,k,l]&=(\beta_1\sigma_2+\beta_2\sigma_1)\textnormal{Im}\Bigg\{2(-1)^jw b_j h'_j(w)\Bigg\}+(\alpha_1\sigma_2+\alpha_2\sigma_1)\textnormal{Im}\Bigg\{2(-1)^jw b_j k'_j(w)\Bigg\}\\ &+(\beta_1\alpha_2+\beta_2\alpha_1)\textnormal{Im}\Bigg\{2(-1)^jw b_j l'_j(w)\Bigg\}.%,\quad j=1,2
%\end{align*}
\begin{align*}
\partial^4_{\varepsilon g g g} F_j(0,g)[h,k,l]=&(\alpha_2+\beta_2)\delta_{2j}\textnormal{Im}\Bigg\{ b_jw\fint_{\mathbb{T}}\frac{2i\textnormal{Im}\{A(\overline{l_j(\tau)}-\overline{l_j(w)})\}}{A^2}\Big[h'_j(\tau)-\frac{h_j(\tau)-h_j(w)}{A}\Big]d\tau
\\&+ b_j  w\fint_{\mathbb{T}}\frac{2i\textnormal{Im}\{A(\overline{h_j(\tau)}-\overline{h_j(w)})\}}{A^2}\Big[l'_j(\tau)-\frac{l_j(\tau)-l_j(w)}{A}\Big]d\tau\Bigg\}\notag
\\&+\sigma_j\textnormal{Im}\Bigg\{ b_j  w\fint_{\mathbb{T}}\frac{2i\textnormal{Im}\{A(\overline{k_j(\tau)}-\overline{k_j(w)})\}}{A^2}\Big[h'_j(\tau)-\frac{h_j(\tau)-h_j(w)}{A}\Big]d\tau\notag
\\&+ b_j  w\fint_{\mathbb{T}}\frac{2i\textnormal{Im}\{A(\overline{h_j(\tau)}-\overline{h_j(w)})\}}{A^2}\Big[k'_j(\tau)-\frac{k_j(\tau)-k_j(w)}{A}\Big]d\tau
\Bigg\}.\notag
\end{align*}
Consequently 
\begin{align*}
\partial^3_{\varepsilon g g g} F_j(0,g_0)[g'(0),g'(0),g'(0)]&=0,\quad \textnormal{for all }\quad h,k,l\in  \mathbb{R}\times \mathbb{R}\times X.
\end{align*}
Therefore, we conclude that
\begin{align}\label{varepsf41}
g^{(4)}(0)=-D_{g} F(0,g_0)^{-1}&\Big({\partial^4_{\varepsilon\varepsilon\varepsilon\varepsilon}} F(0,g_0)+4\partial^4_{\varepsilon\varepsilon \varepsilon g} F(0,g_0)g'(0)+6\partial^3_{\varepsilon\varepsilon g} F(0,g_0)g^{(2)}(0)\\ & +6\partial^4_{\varepsilon \varepsilon g g} F(0,g_0)\big[g'(0),g'(0)\big]+12\partial^3_{\varepsilon g g} F(0,g_0)\big[g''(0),g'(0)\big]\Big).\notag
\end{align}
From \eqref{verepsepsfcounter22}  we get
%\begin{align*}
%\partial^3_{\varepsilon\varepsilon\varepsilon} F_j(0,g)=6\textnormal{Im}&\Bigg\{2\Omega b_j^3\overline{f_j(w)} f'_j(w)w 
%- b_j^3 {\gamma_j} w f'_j(w)\fint_{\mathbb{T}}\frac{B_j(A\overline{B}_j-\overline{A}B_j)}{A^3}\Big[f'_j(\tau)-\frac{B_j}{A}\Big]d\tau
%\\&+ b_j^3 {\gamma_j} w\fint_{\mathbb{T}}\frac{B_j^2(A\overline{B}_j-\overline{A}B_j)}{A^4}\Big[f'_j(\tau)-\frac{B_j}{A}\Big]d\tau\notag
%\\ &+\frac1d\gamma_{3-j}w  b_jb_{3-j}^2 f'_j(w)\fint_{\mathbb{T}}{\overline{ f_{3-j}(\tau)} f'_{3-j}(\tau)}d\tau
%\\ &+\frac{1}{d^2}\gamma_{3-j}w  b_jb_{3-j}f'_j(w)\fint_{\mathbb{T}}{C_j\big[  \overline{ f_{3-j}(\tau)}+\overline{\tau} f'_{3-j}(\tau)\big]}d\tau
%\\ &+\gamma_{3-j}w  b_jf'_j(w)\fint_{\mathbb{T}}\frac{ D_j\overline{\tau}}{d^2}d\tau
%+\gamma_{3-j}b_{3-j} w\fint_{\mathbb{T}}\frac{  D_j\big[\overline{ f_{3-j}(\tau)}+\overline{\tau} f'_{3-j}(\tau)\big]}{d^2}d\tau
%\\ &+\gamma_{3-j}b_{3-j}^2 w\fint_{\mathbb{T}}\frac{ C_j\overline{ f_{3-j}(\tau)} f'_{3-j}(\tau)}{d^2}d\tau
% +\gamma_{3-j}b_{3-j} w\fint_{\mathbb{T}}\frac{C_j^2\big[\overline{ f_{3-j}(\tau)}+\overline{\tau} f'_{3-j}(\tau)\big]}{d^3}d\tau
%\\& +\gamma_{3-j}w  b_jf'_j(w)\fint_{\mathbb{T}}\frac{C_j^2\overline{\tau}}{d^3}d\tau
% +2\gamma_{3-j}w\fint_{\mathbb{T}}\frac{D_jC_j\overline{\tau}}{d^3}d\tau +\gamma_{3-j}w\fint_{\mathbb{T}}\frac{C_j^3\overline{\tau}}{d^4}d\tau \Bigg\}.%,\quad j=1,2
%\end{align*}
%Thus, for all $h\in  \mathbb{R}\times \mathbb{R}\times X$ one has
\begin{align*}
\partial^4_{\varepsilon\varepsilon\varepsilon g}  F_j(0,g_0)h= -\frac{6\gamma_1}{d^3}\textnormal{Im}\Bigg\{&w^3  b_j^3h'_j(w)
  +b_{3-j}w\fint_{\mathbb{T}}{(b_jw+b_{3-j}\tau)^2\big[\overline{ h_{3-j}(\tau)}+\overline{\tau} h'_{3-j}(\tau)\big]}d\tau\\ &+2w\fint_{\mathbb{T}}\big(b_{j}^2{ h_{j}(w)}+{b_{3-j}^2{ h_{3-j}(\tau)}\big)(b_jw+b_{3-j}\tau)\overline{\tau}}d\tau \Bigg\},%\\ &=-\frac{6\gamma_{3-j}}{d^3}\textnormal{Im}\Bigg\{w^3  b_j^3h'_j(w)+w^2  b_j^3h_j(w)  +b_{3-j} w\fint_{\mathbb{T}}{(b_jw-b_{3-j}\tau)^2\overline{\tau} h'_{3-j}(\tau)}d\tau\\ &-b_{3-j}^4 w\fint_{\mathbb{T}}{ h_{3-j}(\tau)}d\tau \Bigg\}.
\end{align*}
for all $h\in  \mathbb{R}\times \mathbb{R}\times X$.
Replacing $h$ by $g'(0)=\Big(0,0,-\frac{b_1}{d^2}\overline{w},-\frac{b_2}{d^2}\overline{w}\Big)$ gives
%\begin{align*}
%\partial^4_{\varepsilon\varepsilon\varepsilon g} F_j(0,g_0)&h=-\frac{6}{d^3}\gamma_{3-j}\textnormal{Im}\Bigg\{
%w^3  b_j^3h'_j(w)
%+w^2  b_j^3h_j(w)  \Bigg\}\\ &-\frac{6}{d^3}\gamma_{3-j}b_{3-j} \textnormal{Im}\Bigg\{w\fint_{\mathbb{T}}{(b_jw-b_{3-j}\tau)^2\big[\overline{ h_{3-j}(\tau)}+\overline{\tau} h'_{3-j}(\tau)\big]}d\tau
% -b_{3-j}^3 w\fint_{\mathbb{T}}{ h_{3-j}(\tau)}d\tau \Bigg\}.
%\end{align*}
\begin{align*}
\partial^4_{\varepsilon\varepsilon\varepsilon g} F_j(0,g_0)g'(0)=-\frac{6\gamma_1}{d^5}\textnormal{Im}\Bigg\{&b_j^4w-b_{3-j}^2w\fint_{\mathbb{T}}{(b_jw+b_{3-j}\tau)^2\big[\tau-\overline{\tau}^3\big]}d\tau \\ &-2w\fint_{\mathbb{T}}\Big(b_{j}^3\overline{w}+b_{3-j}^3\overline{\tau}\Big)(b_jw+b_{3-j}\tau)\overline{\tau}d\tau \Bigg\}.%\\ &=-\frac{6\gamma_{3-j}}{d^5}\textnormal{Im}\Bigg\{-\frac{b_j^4\gamma_{3-j}}{\gamma_j}w-\frac{b_{3-j}^4\gamma_j}{\gamma_{3-j}}w +2w\Big(\frac{b_{j}^4\gamma_{3-j}}{\gamma_{j}}+\frac{b_{3-j}^4\gamma_j}{\gamma_{3-j}}\Big) \Bigg\}\\ &=\frac{6b_{j}^4\gamma_{3-j}^2}{d^5\gamma_j}+\frac{6b_{j}^4}{d^5}\gamma_j.
\end{align*}
Thus
\begin{align}\label{parepsepsepsgcounter}
\partial^4_{\varepsilon\varepsilon\varepsilon g} F_j(0,g_0)g'(0)&=\frac{6\gamma_1}{d^5}\bigg(b_{j}^4+b_{3-j}^4\bigg)\textnormal{Im}\Big\{{w}\Big\}.
\end{align}
%\begin{align*}
%\partial^4_{\varepsilon\varepsilon\varepsilon f_j} F_j(0,g)f'_j(0)&= -\frac{6b_j^4}{d^3}\frac{\gamma_{3-j}^2}{\gamma_j}\textnormal{Im}\Bigg\{
%-w 
%+w   \Bigg\}=0.%,\quad j=1,2
%\end{align*}
%\begin{align*}
%\partial^4_{\varepsilon\varepsilon\varepsilon f_{3-j}} F_j(0,g_0){f'_{3-j}}&=-\frac{6}{d^5}b_{3-j}^2{\gamma_{j}}\textnormal{Im}\Bigg\{w\fint_{\mathbb{T}}{(b_jw-b_{3-j}\tau)^2\big[\tau-\overline{\tau}^3\big]}d\tau
% +b_{3-j} w\fint_{\mathbb{T}}{(b_jw-b_{3-j}\tau)\overline{\tau}^2}d\tau \Bigg\}\\ 
% &=-\frac{6b_{3-j}^4}{d^5}\gamma_{j}\textnormal{Im}\Bigg\{w -w \Bigg\}=0.%,\quad j=1,2
%\end{align*}
%%\begin{align*}
%\partial^4_{\varepsilon\varepsilon\varepsilon f_j} F_j(0,g)h_j&= -6\gamma_{3-j}\textnormal{Im}\Bigg\{
%w  b_jh'_j(w)\fint_{\mathbb{T}}\frac{(b_jw-b_{3-j}\tau)^2\overline{\tau}}{d^3}d\tau
%+w  b_j^2h_j(w)\fint_{\mathbb{T}}\frac{(b_jw-b_{3-j}\tau)\overline{\tau}}{d^3}d\tau  \Bigg\}\\ & =-\frac{6}{d^3}\gamma_{3-j}\textnormal{Im}\Bigg\{
%w^3  b_j^3h'_j(w)
%+w^2  b_j^3h_j(w)  \Bigg\}.%,\quad j=1,2
%\end{align*}
%\begin{align*}
%\partial^4_{\varepsilon\varepsilon\varepsilon f_{3-j}} F_j(0,g_0){h_{3-j}}&=-\frac{6}{d^3}\gamma_{3-j}b_{3-j} \textnormal{Im}\Bigg\{w\fint_{\mathbb{T}}{(b_jw-b_{3-j}\tau)^2\big[\overline{ h_{3-j}(\tau)}+\overline{\tau} h'_{3-j}(\tau)\big]}d\tau
%\\ & +b_{3-j}w\fint_{\mathbb{T}}{{ h_{3-j}(\tau)}(b_jw-b_{3-j}\tau)\overline{\tau}}d\tau \Bigg\}.%,\quad j=1,2
%\end{align*}
Replacing $h$ by $g''(0)=\Big(0,0,-\frac{b_1^2}{d^3}\overline{w}^2,-\frac{b_2^2}{d^3}\overline{w}^2\Big)$ in  \eqref{epsepsgcounter} we get
\begin{align}\label{parepsepsg2counter},
\partial^3_{\varepsilon\varepsilon g} F_j(0,g_0)g''(0)&=\frac{2\gamma_1b_j^4}{d^5}\textnormal{Im}\Big\{
{w}
 \Big\}.
\end{align}
From \eqref{varepsggcounter} we get
%\begin{align*}
%\partial^3_{\varepsilon g g} F_j(0,g_0)[h,k]&=-\beta_1\frac{2\gamma_{3-j}b_j }{\gamma_1+\gamma_2}\textnormal{Im}\Big\{w h'_j(w)\Big\}+\beta_2(-1)^jb_j\frac{\gamma_1+\gamma_2}{d^2}\textnormal{Im}\Big\{ w  h'_j(w)
%\Big\}\\ & +\textnormal{Im}\Bigg\{ b_j {\gamma_j} w\fint_{\mathbb{T}}\frac{2i\textnormal{Im}\{A(\overline{k_j(\tau)}-\overline{k_j(w)})\}}{A^2}\Big[h'_j(\tau)-\frac{h_j(\tau)-h_j(w)}{A}\Big]d\tau
%\\&+ b_j {\gamma_j} w\fint_{\mathbb{T}}\frac{2i\textnormal{Im}\{A(\overline{h_j(\tau)}-\overline{h_j(w)})\}}{A^2}\Big[k'_j(\tau)-\frac{k_j(\tau)-k_j(w)}{A}\Big]d\tau \Bigg\}.%,\quad j=1,2
%\end{align*}
%
\begin{align*}
&\partial^3_{\varepsilon g g} F_j(0,g_0)[g''(0),g'(0)]\\ &=\gamma_1\textnormal{Im}\Bigg\{ b_j  w\fint_{\mathbb{T}}\frac{2i\textnormal{Im}\{A(\overline{\partial_\varepsilon f_j(0,\tau)}-\overline{\partial_\varepsilon f_j(0,w)})\}}{A^2}\Big[\partial^3_{\varepsilon \varepsilon \tau}f_j(0,\tau)-\frac{\partial^2_{\varepsilon \varepsilon}f_j(0,\tau)-\partial^2_{\varepsilon \varepsilon}f_j(0,w)}{A}\Big]d\tau
\\&+ b_j  w\fint_{\mathbb{T}}\frac{2i\textnormal{Im}\{A(\overline{\partial^2_{\varepsilon \varepsilon}f_j(0,\tau)}-\overline{\partial^2_{\varepsilon \varepsilon}f_j(0,w)})\}}{A^2}\Big[\partial^2_{\varepsilon\tau} f_j(0,\tau)-\frac{\partial_\varepsilon f_j(0,\tau)-\partial_\varepsilon f_j(0,w)}{A}\Big]d\tau \Bigg\}.%,\quad j=1,2
\\ &=\gamma_1\,\textnormal{Im}\Bigg\{- \frac{b_j^4}{d^5} w\fint_{\mathbb{T}}\bigg(1- \frac{(\overline{\tau}-\overline{w})^2}{(\tau-w)^2}\bigg)\bigg(2\overline{\tau}^3+\frac{\overline{\tau}^2-\overline{w}^2}{\tau-w}\bigg)d\tau
\\&- \frac{b_j^4}{d^5}  w\fint_{\mathbb{T}}\bigg(w+\tau- \frac{(\overline{\tau}-\overline{w})(\overline{\tau}^2-\overline{w}^2)}{(\tau-w)^2}\bigg)\Big(\overline{\tau}^2+\frac{\overline{\tau}-\overline{w}}{\tau-w}\Big)d\tau \Bigg\}.
\end{align*}
Now using the fact that
\begin{equation*}
\overline{\tau}-\overline{w}=-(\tau-w)\overline{w}\overline{\tau}\quad\forall w,\tau\in \mathbb{T},
\end{equation*} 
we get
\begin{align}\label{parepsggg2g1counter}
&\partial^3_{\varepsilon g g} F_j(0,g_0)[g''(0),g'(0)]=-\frac{\gamma_1b_j^4}{d^5} \textnormal{Im}\Big\{{w}\Big\}.
\end{align}
%\begin{align*}
%\partial^3_{\varepsilon g g } F_j(0,g)[h,k]&=\beta_1\textnormal{Im}\Bigg\{2\bigg[(-1)^j{Z}-(j-1)d\bigg]w b_j h'_j(w)\Bigg\}\\ &\alpha_1\beta_2\textnormal{Im}\Bigg\{2(-1)^jw b_j f'_j(w)\Bigg\}+\beta_2\textnormal{Im}\Bigg\{2(-1)^j\Omega w b_j h'_j(w)
%\Bigg\}\\ & +\textnormal{Im}\Bigg\{2\Omega\bigg[(-1)^j{Z}-(j-1)d\bigg]w b_j h'_j(w)+\frac{\gamma_{3-j}}{d}w  b_jh'_j(w)
%\\&+ b_j {\gamma_j} w\fint_{\mathbb{T}}\frac{2i\textnormal{Im}\{A(\overline{k_j(\tau)}-\overline{k_j(w)})\}}{A^2}\Big[h'_j(\tau)-\frac{h_j(\tau)-h_j(w)}{A}\Big]d\tau
%\\&+ b_j {\gamma_j} w\fint_{\mathbb{T}}\frac{2i\textnormal{Im}\{A(\overline{h_j(\tau)}-\overline{h_j(w)})\}}{A^2}\Big[k'_j(\tau)-\frac{k_j(\tau)-k_j(w)}{A}\Big]d\tau \Bigg\}.%,\quad j=1,2
%\end{align*}
%
%\begin{align*}
%\partial^3_{\varepsilon g g g} F_j(0,g)[h,k,l]&=\beta_1\sigma_2\textnormal{Im}\Bigg\{2(-1)^jw b_j h'_j(w)\Bigg\}\\ &\alpha_1\beta_2\textnormal{Im}\Bigg\{2(-1)^jw b_j l'_j(w)\Bigg\}+\beta_2\sigma_1\textnormal{Im}\Bigg\{2(-1)^j w b_j h'_j(w)
%\Bigg\}\\ & +\sigma_1\textnormal{Im}\Bigg\{2\bigg[(-1)^j{Z}-(j-1)d\bigg]w b_j h'_j(w) \Bigg\}+\sigma_2\textnormal{Im}\Bigg\{2\Omega (-1)^j w b_j h'_j(w) \Bigg\}.%,\quad j=1,2
%\end{align*}
%
%\begin{align*}
%\partial^3_{\varepsilon g g g} F_j(0,g_0)[g'(0),g'(0),g'(0)]&=0.%,\quad j=1,2
%\end{align*}
From \eqref{epsepsgcounter} we find
%\begin{align*}
%\partial^3_{\varepsilon\varepsilon g} F_j(0,g)h&=\alpha_1\textnormal{Im}\Bigg\{4 b_j^2\Big(w\overline{f_j(w)}+ f'_j(w)\Big)
%\Bigg\}\\ & +2\textnormal{Im}\Bigg\{2\Omega b_j^2\Big(w\overline{h_j(w)}+  h'_j(w)\Big)+\frac{1}{d^2}\gamma_{3-j}w^2  b_j^2h'_j(w)
% +\frac{1}{d^2}\gamma_{3-j}b_j^2wh_j(w)
%\\&+ b_j^2 {\gamma_j} w \fint_{\mathbb{T}}\Big[h'_j(w)-\frac{h_j(\tau)-h_j(w)}{A}\Big]\frac{A\overline{B}_j-\overline{A}B_j}{A^2}\Big[f'_j(\tau)-\frac{B_j}{A}\Big]d\tau\notag
%\\&+ b_j^2 {\gamma_j} w \fint_{\mathbb{T}}\Big[f'_j(w)-\frac{B_j}{A}\Big]\frac{2i\textnormal{Im}\{A(\overline{h_j(\tau)}-\overline{h_j(w)})\}}{A^2}\Big[f'_j(\tau)-\frac{B_j}{A}\Big]d\tau\notag
%\\&+ b_j^2 {\gamma_j} w \fint_{\mathbb{T}}\Big[f'_j(w)-\frac{B_j}{A}\Big]\frac{A\overline{B}_j-\overline{A}B_j}{A^2}\Big[h'_j(\tau)-\frac{h(\tau)-h_j(w)}{A}\Big]d\tau\notag
% \Bigg\}\\ &+\frac{2}{d}b_{3-j}^2\gamma_{3-j}\textnormal{Im}\Bigg\{w\fint_{\mathbb{T}}{   f'_{3-j}(\tau)\overline{ h_{3-j}(\tau)}}d\tau+w\fint_{\mathbb{T}}{   h'_{3-j}(\tau)\overline{ f_{3-j}(\tau)}}d\tau
% \Bigg\}.%,\quad j=1,2
%\end{align*}
%
\begin{align*}
\partial^3_{\varepsilon\varepsilon g g} F_j(0,g_0)[h,k]&=-\frac{2\gamma_1}{d}b_{3-j}^2\textnormal{Im}\Bigg\{w\fint_{\mathbb{T}}{   k'_{3-j}(\tau)\overline{ h_{3-j}(\tau)}}d\tau+w\fint_{\mathbb{T}}{   h'_{3-j}(\tau)\overline{ k_{3-j}(\tau)}}d\tau
 \Bigg\}.%,\quad j=1,2
\end{align*}
Therefore 
\begin{align}\label{epsepsggg1g1counter}
\partial^3_{\varepsilon\varepsilon g g} F_j(0,g_0)[g'(0),g'(0)]&=\frac{4\gamma_1b_{3-j}^4}{d^5}\textnormal{Im}\Big\{w \Big\}.%,\quad j=1,2
\end{align}
Plugging in the identities \eqref{epsepsggg1g1}, \eqref{parepsepsepsgcounter}, \eqref{parepsepsg2counter}, \eqref{parepsggg2g1counter} and \eqref{epsepsggg1g1counter}  into \eqref{varepsf41} we get
\begin{align}\label{varepsf41f}
g^{(4)}(0)=-\frac{24}{d^{4}}D_{g} F(0,g_0)^{-1}.
%\Bigg(\frac{24}{d^{5}}\begin{pmatrix}
%\gamma_2 b_1^4\\
%\gamma_1 b_2^4 
%\end{pmatrix}\textnormal{Im}\big\{ {w}^{5}\big\}+\frac{24}{d^{5}}\begin{pmatrix}
%\gamma_1 b_2^4\\
%\gamma_2 b_1^4 
%\end{pmatrix}\textnormal{Im}\big\{ {w}\big\}+\frac{24}{d^{5}}\begin{pmatrix}
%\frac{\gamma_2^2}{\gamma_1} b_1^4\\
%\frac{\gamma_1^2}{\gamma_2} b_2^4 
%\end{pmatrix}+\frac{12}{d^{5}}\begin{pmatrix}
%\frac{\gamma_2^2}{\gamma_1} b_1^4\\
%\frac{\gamma_1^2}{\gamma_2} b_2^4 
%\end{pmatrix}\textnormal{Im}\big\{ {w}\big\}\notag\\ &+\frac{36}{d^{5}}(\gamma_1+\gamma_2)\begin{pmatrix}
%\frac{\gamma_2}{\gamma_1} b_1^4\\
%\frac{\gamma_1}{\gamma_2} b_2^4 
%\end{pmatrix}\textnormal{Im}\big\{ {w}^3\big\} -\frac{24}{d^{5}}\begin{pmatrix}
%\frac{\gamma_1^2}{\gamma_2} b_2^4\\
%\frac{\gamma_2^2}{\gamma_1} b_1^4 
%\end{pmatrix}\textnormal{Im}\big\{ {w}\big\}-\frac{12}{d^{5}}\begin{pmatrix}
%\frac{\gamma_2^2}{\gamma_1} b_1^4\\
%\frac{\gamma_1^2}{\gamma_2} b_2^4 
%\end{pmatrix}\textnormal{Im}\big\{ {w}\big\}\Bigg)\notag\\ &=-D_{g} F(0,g_0)^{-1}
\Bigg(&-\begin{pmatrix}
b_1^4\\
 b_2^4 
\end{pmatrix}\textnormal{Im}\big\{ {w}^{5}\big\}+\frac{\gamma_1}{d}\begin{pmatrix}
 b_1^4+2b_2^4\\
 b_2^4 +2b_1^4
\end{pmatrix}\textnormal{Im}\big\{ {w}\big\}\notag\Bigg).
\end{align}
By virtue of \eqref{g-1counter} we conclude that
\begin{align}
g^{(4)}(0)&= \frac{24}{d^{4}}\bigg(\frac{\gamma_1}{2d}(2b_1+b_2^4),
\gamma_1(b_1^4-b_2^4),
-\frac{1}{4} b_1^4\overline{w}^4,
-\frac{1}{4} b_2^4\overline{w}^4
\bigg),
\end{align}
which achieves the proof of the desired result.
\end{proof}

%%%%%%%%%%%%%%%%%%%%%%%%%%%%%%%%%%%%%%%%%%%%%%%%


\begin{thebibliography}{99}
 
%\bibitem{A} H. Aref, {\it Integrable, chaotic, and turbulent vortex motion in two-dimensional flows}. Ann. Rev. Fluid
%Mech., 15 (1983) 345--389.

%\bibitem{BC} A. L. Bertozzi, P. Constantin, {\it Global regularity for vortex patches}. Comm. Math. Phys. 152 (1993), no.1,
%19--28.

\bibitem{Bu} J. Burbea, {\it Motions of vortex patches}, Lett. Math. Phys. 6 (1982), 1--16.

%\bibitem{Burt} G. R. Burton, {\it Steady symmetric vortex pairs and rearrangements}. Proc. RRoy. Soc. Edinburgh Sect. A.
%108 (1988) 269--290.

 \bibitem{CCG4} A. Castro, D. C\'ordoba, J. G\'omez-Serrano, {\it   Uniformly rotating analytic global patch solutions for active scalars.} Ann. PDE, 2(1):Art. 1, 34, 2016.

 \bibitem{CCG2} A. Castro, D. C\'ordoba, J. G\'omez-Serrano, {\it  Uniformly rotating smooth solutions for the incompressible 2D Euler equations}, Arch. Ration. Mech. Anal. 231 (2019), no. 2, 719--785.

 \bibitem{CCG3} A. Castro, D. C\'ordoba, J. G\'omez-Serrano, {\it  Global smooth solutions for the inviscid SQG equation}, arXiv:1603.03325, 2016.

 \bibitem{CCG} A. Castro, D. C\'ordoba, J. G\'omez-Serrano, {\it  Existence and regularity of rotating global solutions for the generalized surface quasi-geostrophic equations}. Duke Math. J. 165(5) (2016), 935--984.

 \bibitem{CCG1} A. Castro, D. C\'ordoba, J. G\'omez-Serrano. {\it Uniformly rotating analytic global patch solutions for active
scalars}. Ann. PDE 2 (2016), no. 1, Art. 1, 34 pp.

 \bibitem{DZ} G. S. Deem and N. J. Zabusky, {\it Vortex waves : Stationary V-states Interactions, Recurrence, and Breaking}, Phys. Rev. Lett. 40 13 (1978), 859--862.

 \bibitem{HHF} F. de la Hoz, Z. Hassainia, T. Hmidi, {\it Doubly connected V-states for the generalized surface quasi-geostrophic
equations}. Arch. Ration. Mech. Anal. 220 (2016), no. 3, 1209--1281.

 \bibitem{HHFMV} F. de la Hoz; Z. Hassainia; T. Hmidi; J Mateu, {\it An analytical and numerical study of steady patches in the disc}. Anal. PDE 9 (2016), no. 7, 1609--1670.

%\bibitem{De} S. A. Denisov, {\it The centrally symmetric V-states for active scalar equations. Two-dimensional Euler with
%cut-off}. Commun. Math. Phys. 337 (2015) 955--1009.

\bibitem{D} D. G. Dritschel. {\it A general theory for two-dimensional vortex interactions}. J. Fluid Mech. 293 (1995)
269--
303.
\bibitem{DHR}D. G. Dritschel, T. Hmidi, C. Renault, {\it Imperfect bifurcation for the quasi-geostrophic shallow-water equations}, Arch. Ration. Mech. Anal. 231 (2019), no. 3, 1853--1915.
%\bibitem{G} Th. Gallay, {\it Interaction of vortices in weakly viscous planar flows}. Arch. Ration. Mech. Anal., 200 (2011)
%445--490.

%\bibitem{G2} Th. Gallay, {\it Interacting vortex pairs in inviscid and viscous planar flows, Mathematical Aspects of Fluid
%Mechanics} (J. Robinson, J. Rodrigo, et W. Sadowski eds), London Math. Soc. Lecture Notes Series 402,
%Cambridge 2012. 

\bibitem{GHS} C. Garc\`ia, T. Hmidi, J. Soler, {\it Non uniform rotating vortices and periodic orbits for the two-dimensional Euler Equations.} arXiv:1807.10017.

\bibitem{G-S-Y}  J. G\'omez-Serrano,  J. Park, J. Shi, Y.  Yao {\it Symmetry in stationary and uniformly-rotating solutions of active scalar equations.} arXiv:1908.01722 
 \bibitem{HH}  Z. Hassainia, T. Hmidi, {\it On the V-States for the generalized quasi-geostrophic equations}. Comm. Math. Phys. 337
(2015), no. 1, 321?377.

 \bibitem{HMH} Z. Hassainia, N. Masmoudi, M. H. Wheeler,  {\it Global bifurcation of rotating vortex patches}, arXiv:1712.03085, to appear in Comm. Pure Appl. Math.

 \bibitem{HR}  T. Hmidi, C. Renault, {\it Existence of small loops in a bifurcation diagram near degenerate eigenvalues}. Nonlinearity 30(10) (2017), 3821--3852.
 
  \bibitem{HFMV} T. Hmidi,  F. de la Hoz, J.  Mateu, J.  Verdera. {\it  Doubly connected V-states for the planar Euler equations.}  SIAM J. Math. Anal. 48 (2016), no. 3, 1892--1928. 
  
    
     \bibitem{HM2} T. Hmidi, J. Mateu, {\it Bifurcation of rotating patches from Kirchhoff vortices}. Discrete Contin. Dyn. Syst. 36 (2016), no. 10, 5401--5422. 
     
 \bibitem{HM3} T. Hmidi, J. Mateu, {\it Degenerate bifurcation of the rotating patches}. Adv. Math. 302 (2016), 799--850. 
 
  \bibitem{HM} T. Hmidi,  J.  Mateu . {\it  Existence of corotating and counter-rotating vortex pairs for active scalar equations.} Comm. Math. Phys. 350 (2017), no. 2, 699--747.   
  
 \bibitem{HMV} T. Hmidi, J.  Mateu, J.  Verdera, {\it Boundary Regularity of Rotating Vortex Patches}. Arch. Ration. Mech. Anal. 209 (2013), no. 1, 171--208.
 
 \bibitem{HMV2} T. Hmidi, J.  Mateu, J.  Verdera, {\it On rotating doubly connected vortices}.  J. Differential Equations 258 (2015), no. 4, 1395--1429.
 
%\bibitem{J}  M. Juckes, {\it  Quasigeostrophic dynamics of the tropopause}. J. Armos. Sci. (1994) 2756--2768.
%\bibitem{Kamm} J. R. Kamm,  {\it Shape and stability of two-dimensional uniform vorticity regions}. PhD thesis, California
%Institute of Technology, 1987.
\bibitem{Keady} G. Keady, {\it Asymptotic estimates for symmetric vortex streets}. J. Austral. Math. Soc. Ser. B26 (1985)
487--502.
\bibitem{K} G. Kirchhoff, {\it Vorlesungen uber mathematische Physik} (Leipzig, 1874).
%\bibitem{L} H. Lamb, {\it Hydrodynamics}, Dover Publications, New York, (1945).
 
% \bibitem{S} P. G. Saffman, {\it Vortex dynamics. Cambridge Monographs on Mechanics and Applied Mathematics}.
%Cambridge University Press, New York, 1992.

 %\bibitem{N} P. K. Newton,{\it The N-Vortex Problem}. Analytical Techniques, Springer, New York, (2001).

% \bibitem{No} J. Norbury, {\it Steady planar vortex pairs in an ideal fluid}. Comm. Pure Appl. Math. 28 (1975) 679--700.
 
 \bibitem{O}  E.A. Overman II, 
 {\it Steady-state solutions of the Euler equations in two dimensions. II. Local analysis of
limiting V-states}. SIAM J. Appl. Math. 46(5) (1986), 765--800.

 \bibitem{P} R. T. Pierrehumbert, {\it A family of steady, translating vortex pairs with distributed vorticity}. Journal of
Fluid Mechanics, vol. 99, July 11, 1980, p. 129--144.

 \bibitem{SS} P. G. Saffman, R. Szeto, {\it Equilibrium shapes of a pair of equal uniform vortices}. Phys. Fluids 23 (1980), no. 12, 2339--2342.

 %\bibitem{SmSc} D. Smets, J. V. Schaftingen, {\it Desingularization of Vortices for the Euler Equation}. Arch. Rational Mech.
%Anal. 198 (2010) 869--925.

 \bibitem{T} B. Turkington, {\it Corotating steady vortex flows with n-fold symmety}. Nonlinear Analysis, Theory, Methods
and Applications, 9 (1985) 4, 351--369.

%  \bibitem{W} S. E. Warschawski.{\it  On the higher derivatives at the boundary in conformal mapping}, Trans. Amer. Math. Soc. 38 (1935), no. 2, 310?340.

 \bibitem{WOZ} H. M. Wu, E. A. Overman II and N. J. Zabusky, {\it Steady-state solutions of the Euler equations in two dimensions: rotating and translating V-states with limiting cases I. Algorithms ans results}. J. Comput. Phys. 53 (1984), 42--71.

 \bibitem{Y} V.I. Yudovich, {\it Non-stationnary flows of an ideal incompressible fluid}. Zhurnal Vych Matematika, 3
(1963)1032--106.
\end{thebibliography}
\end{document}